\newtheorem{theorem}{Theorem}[section]
\newtheorem{lemma}[theorem]{Lemma}
\newtheorem{prop}[theorem]{Proposition}
\newtheorem{cor}[theorem]{Corollary}
\newtheorem*{question}{Question}
\theoremstyle{definition}
\newtheorem{definition}[theorem]{Definition}
\newtheorem{remark}[theorem]{Remark}
\newtheorem{example}[theorem]{Example}
\newtheorem*{claim}{Claim}
\def\and{\cap}
\def\deg{\hbox{\rm{deg}}}
 \def\dim{\hbox{\rm{dim}}}
 \DeclareMathOperator{\diffspec}{diffspec}
 \DeclareMathOperator{\Char}{char}
 \DeclareMathOperator{\ddim}{\text{$\delta$-dim}}
 \DeclareMathOperator{\ord}{\text{ord}}
  \DeclareMathOperator{\deltrdeg}{\text{$\delta$-tr.deg}}
    \DeclareMathOperator{\trdeg}{\text{tr.deg}}
  \DeclareMathOperator{\kol}{Kol}
  \DeclareMathOperator{\Id}{id}
   \DeclareMathOperator{\Ker}{ker}
\author{Wei Li, Alexey Ovchinnikov, Gleb Pogudin, and Thomas Scanlon\smallskip\\ \small
KLMM,
   Academy of Mathematics and Systems Science,
   Chinese Academy of Sciences\\\small
    No.55 Zhongguancun East Road, 100190, Beijing, China\\\small
\href{mailto:liwei@mmrc.iss.ac.cn}{liwei@mmrc.iss.ac.cn} \vspace{0.05in}\\ \small
CUNY Queens College, Department of Mathematics,
65-30 Kissena Blvd, Queens, NY 11367, USA \\ \small
CUNY Graduate Center, Mathematics and Computer Science, 365 Fifth Avenue,
New York, NY 10016, USA\\ \small
\href{mailto:aovchinnikov@qc.cuny.edu}{aovchinnikov@qc.cuny.edu} \vspace{0.05in} \\  \small Courant Institute of Mathematical Sciences, New York University, New York, NY 10012, USA\\ \small
\href{mailto:pogudin@cims.nyu.edu}{pogudin@cims.nyu.edu}\\ \small University of California at Berkeley, Department of Mathematics, Berkeley, CA 94720,\\ 
\small\href{mailto:scanlon@math.berkeley.edu}{scanlon@math.berkeley.edu}}
\begin{document}
\date{}
 \title{Elimination of unknowns for systems of algebraic differential-difference equations}

 \maketitle
 \begin{abstract}
We establish effective elimination theorems for differential-difference equations.  Specifically, 
we find a computable function $B(r,s)$ of the natural number parameters $r$ and $s$ so that 
for any system of algebraic differential-difference equations in the variables
$\bm{x} = x_1, \ldots, x_q$ and $\bm{y} = y_1, \ldots, y_r$ each of which has order and
degree in $\bm{y}$ bounded by $s$ over a differential-difference field, there is a non-trivial 
consequence of this system involving just the $\bm{x}$ variables if and only if such a 
consequence may be constructed algebraically by applying no more than $B(r,s)$ iterations of the 
basic difference and derivation operators to the equations in the system.  We relate this finiteness
theorem to the problem of finding solutions to such systems of differential-difference equations in 
rings of functions showing that a system of differential-difference equations
over $\mathbb{C}$ is algebraically consistent if and only if it has solutions in a certain 
ring of germs of meromorphic functions.

 \end{abstract} 
 \section{Introduction}
Differential-difference equations, or what are sometimes called delay 
 differential
 equations, especially
 when the independent variable represents time, are ubiquitous in applications.  See for instance~\cite{apps-bio-DDE}
 and the collection it introduces for a discussion of applications of delay 
differential
 equations in biology,
 the discussion of the  follow-the-leader model in~\cite{crowds} for the use of differential-difference 
 equations to model crowd behavior, and~\cite{DDEapps} for a thorough discussion of theory of 
 delay differential equations and their applications to population dynamics and other fields.  
 Much work has been undertaken in the analysis of the behavior of the solutions of these equations.  We take
 up and solve parallel problems.  First, we address the problem of determining 
 the consistency of a systems of algebraic differential-difference equations, and more generally, of
 eliminating variables  
for
 such a system of equations.   Secondly, we 
 ask and answer the question of what structures should serve as the universal differential-difference
 rings in which we seek our solutions to these equations.   
 
 Our solution to the first problem, that is, of performing effective elimination for systems of 
 differential-difference equations, is achieved by reducing the problem for 
 differential-difference equations to one for ordinary polynomial equations to which 
standard methods in computational algebra may be applied.  Let us state our main theorem, Theorem~\ref{thm:main}, now. Precise definitions are given in Section~\ref{sec:basic-notation}.     We show that there is 
a computable function $B(r,s)$ of the natural number parameters $r$ and $s$ so that whenever one is 
given tuples of variables $\bm{x} = x_1,\ldots,x_q$  and $\bm{y} = y_1, \ldots, y_r$ and 
a set $F$ of differential-difference polynomials in these variables each of which has  order and degree in $\bm{y}$ 
bounded by $s$ over some 
differential-difference field, then the differential-difference ideal generated by $F$, that is,
the ideal generated by the elements of $F$ and all of their transforms under iterated applications of
the distinguished difference and derivation operators, contains a nontrivial differential-difference 
polynomial in just the $\mathbf{x}$ variables if and only if the ordinary ideal generated by 
the transforms of elements of $F$ of order at most $B = B(r,s)$ already contains such a nontrivial
differential-difference polynomial in $\bm{x}$.   In particular, taking $q = 0$, this 
gives a procedure to test the consistency of a system of differential-difference equations.

The reader may rightly object that rather than giving a method for determining consistency of 
such a system of equations, what we have really done is to give a method for testing whether there is 
an explicit algebraic obstruction to the existence of a solution.  In what sense must a solution 
actually exist if there is no such algebraic obstruction?  This brings us to our second question of 
where to find the solutions.   We address this problem in Section~\ref{sec:seqfun}, in which 
we begin by proving an abstract Nullstellensatz theorem to the effect that solutions may 
always be found in differential-difference rings of sequences constructed from 
differential-difference fields.  Of course, in practice, one might expect that the differential-difference
equations describe \emph{functions} 
for which
the difference or delay operator takes the form $\sigma(f)(t) = 
f(t - \tau)$ (for some fixed parameter $\tau$) and the derivation operator is given by usual differentiation
so that $\delta f = \frac{df}{dt}$.   We establish with 
Proposition~\ref{prop:strongnullstellensazt_germs} that certain rings of 
\emph{germs} of meromorphic functions serve as universal differential-difference rings in the 
sense that every algebraically consistent system of differential-difference equations over 
$\mathbb{C}$ has solutions in these rings of germs.  As a complement to this positive result, 
we 
show
that there are algebraically consistent differential-difference equations 
that
cannot be 
solved in any ring of meromorphic \emph{functions} (as opposed to germs).  

The method of proof of our main theorem is modeled on the approach taken by three of the
present authors in~\cite{OPS} for algebraic difference equations in that we modify and 
extend the decomposition-elimination-prolongation (DEP)  method.  However, we encounter 
some very substantial obstacles in extending these arguments to the differential-difference
context.   First of all, we argue by reducing from differential-difference equations to 
\emph{differential} equations and then complete the reduction to algebraic equations using 
methods in computational differential algebra.   Differential algebra in the sense of Ritt and 
Kolchin, especially the Noetherianity of the Kolchin topology, substitute for classical commutative
algebra and properties of the Zariski topology, but there are essential distinctions preventing a 
smooth substitution.  Most notably, in the computation of a bound for the length of a 
possible skew-periodic train in~\cite{OPS}, one argues by induction on the \emph{co}dimension of 
a certain subvariety.  In that algebraic case, since the ambient dimension is finite, such an 
inductive argument is well-founded.  This would fail 
in the case at hand with differential algebraic varieties.  To deal with this difficulty, 
we must argue with a much subtler induction stepping through a decreasing (and hence finite) chain 
of Kolchin polynomials.   

With other steps of the argument, we must invoke or prove delicate 
theorems on computational differential algebra for which the corresponding results for 
ordinary polynomial rings are fairly routine.  For instance, a key step in the calculation of
our bounds involves computing upper bounds on the number of irreducible components of 
a differential algebraic variety given bounds on  
such
associated 
parameters  
as the degrees of defining equations and the dimensions of certain ambient varieties.  In the 
algebraic case, these bounds are provided by B\'{e}zout-type theorems.  Here, we work to establish
such bounds with Proposition~\ref{prop:main}.

For the most part, the methods we employ could be used to prove analogous theorems for partial 
differential-difference equations.  That is, we would work with a ring $\mathcal{R}$ equipped with a
ring endomorphism $\sigma:\mathcal{R} \to \mathcal{R}$ and finitely many commuting derivations 
$\delta_1, \ldots, \delta_n:\mathcal{R} \to \mathcal{R}$ each of which commutes with $\sigma$.   
Our arguments go through verbatim in this case up to the point of the computation of bounds on the 
number of irreducible components of a differential variety and to our knowledge it is an open problem
whether such bounds exist, much less what the bounds might be.  For the 
bounds we compute, it helps that for ordinary differential fields, the coefficients
of the Kolchin polynomial are geometrically meaningful.  This is not so for 
partial differential fields, but the bounds on these coefficients obtained in~\cite{Leon-Sanchez} 
should make it possible to extract explicit bounds analogous to $B(r,s)$ in the 
case of partial differential-difference equations 
once the issue of bounding the number of irreducible 
components has been resolved.

Other potential generalizations 
present themselves, but they, too, lie outside the reach of our present methods.   For example, 
one might wish for an elimination theorem for differential-difference equations in positive
characteristic, especially as there is no restriction on the characteristic for the elimination theorem 
for algebraic difference equations, but many difficulties arise in positive characteristic, starting 
with the non-Noetherianity of the corresponding differential algebraic topology.   In another direction, 
one might wish to allow for several commuting difference operators, corresponding, for example, to allowing
delays of various scales in the delay differential equations.   Based on our preliminary investigations, we 
expect the ultimate theorems to have a fundamentally different character for two or more difference 
operators.  In particular, even for algebraic difference equations, we know of examples of consistent
systems of difference equations with two commuting difference operators for which there are no skew-periodic
solutions.   On its own, this does not rule out the possibility of an effective elimination theorem, but 
it does mean that the approach we take here cannot be applied.

Finally, as a matter of proof technique, our approach is to reduce from equations in several operators to 
equations in fewer operators and so on until we reach purely algebraic equations.   We expect that it may 
be possible to compute better bounds by making use of integrability conditions in the DEP method to 
reduce directly from equations with operators to algebraic equations.  We do not pursue this idea here.

Some work on elimination theory for differential-difference equations appears in the literature, though the 
known results do not cover the problems we consider.  In~\cite{MS03} algorithms for 
computing analogues of Gr\"{o}bner bases in certain differential-difference algebras are 
developed.  These algebras are rings of linear differential-difference operators.  So, the
resulting elimination theorems are appropriate for linear equations, but not for the 
nonlinear differential-difference equations we consider. 
Gr\"{o}bner bases of a different kind for rings of differential-difference polynomial
rings are considered in~\cite{Levin2000, ZW2008} with the aim of computing generalized
Kolchin polynomials.  In these papers, the invariants are computed in 
\emph{fields}, but as one sees in applications and as we will show in Section~\ref{sec:seqfun}, 
one must consider possible solutions in 
rings of sequences or of functions in which there are many zero divisors. 

In the 
papers~\cite{C1970} and~\cite{GVYZ2009}, characteristic set methods for 
differential-difference rings are developed.  While one might imagine that these techniques
may be relevant to the problems we consider, it is not clear how to apply them directly
as once again, generally, characteristic set methods are best adapted to studying solutions of
such equations in fields.
In~\cite{BM07} the model theory of 
differential-difference fields of characteristic zero is worked out.   The results include a 
strong quantifier simplification theorem from which one could deduce an effective elimination 
theorem in our sense.  In~\cite{MS2014} such quantifier simplification theorems were proven for
fields equipped with several operators.  An overview of the model theory of
fields with operators may be found in~\cite{C15}.  All of these quantifier elimination theorems
for difference and differential fields very strongly
use the hypothesis that the solutions are sought in a \emph{field}.  Already at the 
level of algebraic difference equations, the results of~\cite{HP07} show that if we allow
for solving our equations in rings of sequences or their like, then the corresponding 
logical theory will be undecidable.  In particular, no quantifier elimination theorem of 
the kind known for differential-difference fields can hold.   This makes our 
effective elimination theorem all that more remarkable.

This paper is organized as follows.  We start in Section~\ref{sec:basic-notation} by introducing
the technical definitions we require to state our main theorems.  These
theorems are then announced in Section~\ref{sec:main-result}.   With Section~\ref{sec:defproof}
we give the definitions of the technical concepts used in our proofs.  We
deal with the question of where we should seek the solutions to our differential-difference
equations in Section~\ref{sec:seqfun}.  The proof of our main theorem occupies
Section~\ref{sec:proofmain}.

%%%%%%%%%%%%%%%%%%%%%%%%%%%
 
 \section{Basic notation to state the main result}
 \label{sec:basic-notation}
\begin{definition}[Differential-difference rings]
\begin{itemize}
\item[]
    \item 
 A differential-difference 
 ring $(\mathcal R, \delta, \sigma)$ is a commutative ring $\mathcal R$ endowed with a derivation  $\delta$ and 
 an endomorphism $\sigma$ such that $\delta\sigma=\sigma\delta$. 
 \item 
 For simplicity of the notation,  we say $\mathcal R$ is  a $\delta$-$\sigma$-ring.
 \item
 When $\mathcal R$ is additionally a field, it is called a $\delta$-$\sigma$-field. 
 \item
 If $\sigma$ is an automorphism of $\mathcal R$, $\mathcal R$ is  called an inversive $\delta$-$\sigma$-ring, or simply a $\delta$-$\sigma^\ast$-ring.
 \item 
If $\sigma = \operatorname{id}$, $\mathcal{R}$ is called a $\delta$-ring or differential ring.
 \item
 Given two $\delta$-$\sigma$-rings $\mathcal R_1$ and $\mathcal R_2$,
 a   homomorphism $\phi: \mathcal R_1\longrightarrow \mathcal R_2$ is called a $\delta$-$\sigma$-homomorphism, 
 if $\phi$ commutes with $\delta$ and $\sigma$, i.e., $\phi \delta=\delta\phi$ and $\phi \sigma=\sigma\phi$.
 \item For a commutative ring $R$, the ideal generated by $F \subset R$ in $R$ is denoted by $\langle F\rangle$.\hypertarget{diffideal}{}
 \item For a $\delta$-ring $R$, 
 the differential ideal generated by $F\subset R$ in $R$ is denoted by 
 $\langle F\rangle^{(\infty)}$; for a non-negative integer $B$, the ideal in $R$ generated by the set $\{\delta^i(F)\mid 0\leqslant i\leqslant B\}$ in $R$ is denoted by $\langle F\rangle^{(B)}$.
 \end{itemize}
 \end{definition}
  
  \begin{definition}[Differential-difference polynomials]
  \begin{itemize}
  \item[]
  \item\hypertarget{kinfty}{}
 Let $\mathcal{R}$ be a $\delta$-$\sigma$-ring.  
 The differential-difference polynomial ring over $\mathcal{R}$ in $\bm{y}=y_1,\ldots,y_n$, 
 denoted by 
 $\mathcal{R}[\bm{y}_\infty]$
 , is the $\delta$-$\sigma$ ring \[\big(\mathcal{R}[\delta^i\sigma^jy_k\mid i,j\geqslant0;\,1\leqslant k\leqslant n],\delta,\sigma\big),\quad \sigma(\delta^i\sigma^jy_k):=\delta^i\sigma^{j+1}y_k, \ \ \delta(\delta^i\sigma^jy_k):=\delta^{i+1}\sigma^jy_k.\]
 A $\delta$-$\sigma$ polynomial is an element of $\mathcal{R}[\bm{y}_\infty]$. 
 \item \hypertarget{kB}{}
 Given $B\in\mathbb N$, let $\mathcal{R}[\bm{y}_B]$ denote the polynomial ring $\mathcal{R}[\delta^i\sigma^jy_k\mid 0\leqslant i,j\leqslant  B; 1\leqslant k\leqslant n]$.  
 \item
 Given $f\in\mathcal{R}[\bm{y}_\infty]$, the order of $f$ is defined to be the maximal $i+j$ such that $\delta^i\sigma^jy_k$ effectively appears in $f$ for some $k$, denoted by $\ord(f)$.\hypertarget{ordsigmadelta}{}
 \item
 The relative order  of $f$ with respect to $\delta$ (resp. $\sigma$), denoted by $\ord_\delta(f)$ (resp. $\ord_\sigma(f)$),  is defined as the maximal $i$ (resp. $j$) such that 
 $\delta^i\sigma^jy_k$ effectively appears in $f$ for some $k$.
 \item Let $\mathcal R$ be a $\delta$-$\sigma$-ring containing a $\delta$-$\sigma$-field $k$.
Given a point $\bm{a}=(a_1,\dots,a_n)\in \mathcal R^n$, there exists a unique $\delta$-$\sigma$-homomorphism over $k$, 
\[\phi_{\bm{a}}: k[\bm{y}_\infty]\longrightarrow\mathcal R \quad \text{with}\ \ \phi_{\bm{a}}(y_i)=a_i\  \text{and}\ \phi_{\bm{a}}|_k=\Id.\]
 Given $f\in k[\bm{y}_\infty]$, $\bm{a}$ is called  a solution of $f$ in $\mathcal R$ if $f\in\Ker(\phi_{\bm{a}})$.
\end{itemize}
\end{definition}

\begin{definition}[Sequence rings and solutions]\label{def:sequences}
  For a $\delta$-$\sigma$-$k$-algebra $\mathcal{R}$ and $I= \mathbb{N}$ or  $\mathbb{Z}$, the sequence ring $\mathcal{R}^I$  has  the following structure of a $\delta$-$\sigma$-ring ($\delta$-$\sigma^\ast$-ring for $I=\mathbb{Z}$) with $\sigma$ and $\delta$ defined by 
 \[\sigma\big((x_i)_{i\in I}\big):=(x_{i+1})_{i\in I}   \quad\text{and}\quad \delta\big((x_i)_{i\in I}\big):=(\delta(x_{i}))_{i\in I}.\] 
 For a $k$-$\delta$-$\sigma$-algebra $\mathcal{R}$, $\mathcal{R}^I$ can be considered a $k$-$\delta$-$\sigma$-algebra 
  by embedding $k$ into $\mathcal{R}^I$  in  the following way: 
  \[a\mapsto (\sigma^i(a))_{i\in I},\ \  a \in k.\]
For $f\in k[\bm{y}_\infty]$, a solution of $f$ with components in $\mathcal{R}^I$ is called a {\em sequence solution of $f$ in $\mathcal{R}$.} 
\end{definition}
 
  \section{Main result}
  \label{sec:main-result}

   \begin{theorem}[Effective elimination]\label{thm:main}
For all  non-negative integers $r,s$, there exists a computable $B = B(r,s)$ 
  such that, for all:
  \begin{itemize}
  \item non-negative integers $q$,
   \item $\delta$-$\sigma$-fields
    $k$ with $\Char k =0$, and  
\item sets of $\delta$-$\sigma$-polynomials $F\subset k[\hyperlink{kB}{\bm{x}_s
,\bm{y}_s}]$, where $\bm{x} =x_1,\ldots,x_q$, $\bm{y}=y_1,\ldots,y_r$, and $\deg_{\bm{y}}F\leqslant s$,
   \end{itemize}
   we have 
      \[\big\langle \sigma^i(F)\mid i\in \mathbb{Z}_{\geqslant 0} \big\rangle^{\hyperlink{diffideal}{(\infty)}}\cap k[\hyperlink{kinfty}{\bm{x}_{\infty}}]=\{0\} \iff \langle \sigma^i(F)\mid i\in [0,B] \big\rangle^{\hyperlink{diffideal}{(B)}}\cap k[\bm{x}_B]=\{0\}. \]
 \end{theorem}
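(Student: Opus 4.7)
The plan is to adapt the decomposition-elimination-prolongation (DEP) method developed in \cite{OPS} for pure difference equations to the differential-difference setting. The direction ``$\Rightarrow$'' is immediate: since $\langle \sigma^i(F)\mid i\in [0,B]\rangle^{(B)}\cap k[\bm{x}_B]$ sits inside $\langle\sigma^i(F)\mid i\in\mathbb{Z}_{\geqslant 0}\rangle^{(\infty)}\cap k[\bm{x}_\infty]$, triviality of the latter forces triviality of the former. The substantive direction is ``$\Leftarrow$'', which I would prove in contrapositive form: produce a computable $B=B(r,s)$ such that whenever the full $\delta$-$\sigma$-ideal generated by $F$ meets $k[\bm{x}_\infty]$ nontrivially, the truncated ordinary ideal obtained from at most $B$ applications of $\sigma$ and $\delta$ to $F$ already meets $k[\bm{x}_B]$ nontrivially.

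\textbf{Reduction to pure differential algebra.} First, for each $N\geqslant 0$ I would replace the difference operator by a labeling trick: introduce fresh differential indeterminates $\bm{y}^{(0)},\ldots,\bm{y}^{(N)}$ and $\bm{x}^{(0)},\ldots,\bm{x}^{(N)}$ representing the $\sigma$-prolongations of $\bm{y}$ and $\bm{x}$, and rewrite each $\sigma^i(f)\in F$ as the corresponding element of an ordinary $\delta$-polynomial ring. This yields a differential ideal $I_N$ and a Kolchin-closed zero set $V_N$, whose projection onto the $\bm{x}$-coordinates has Kolchin-closure $W_N$. The nontrivial $\bm{x}$-consequences at stage $N$ correspond exactly to the defect $W_N \subsetneq$ ambient $\bm{x}$-space, and the task reduces to bounding how late the chain $W_0\supseteq W_1\supseteq\cdots$ can stabilize in terms of $r$ and $s$.

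\textbf{The main obstacle.} This is the point flagged in the introduction: the termination of the DEP chain in \cite{OPS} is proved by induction on the \emph{codimension} of an auxiliary subvariety of a fixed finite-dimensional ambient space, whereas here the natural ambient space has infinite $\delta$-transcendence degree, so codimension is not well-founded. My replacement would be to track, for each irreducible component arising in the decomposition step, its Kolchin polynomial, and to run the induction along the well-ordered set of Kolchin polynomials under their natural ordering on $\mathbb{N}[t]$. Concretely, I would show that any failure of stabilization at step $N$ must be witnessed by a strict decrease of the Kolchin polynomial of some relevant component, and that, because this order is well-founded, the descent terminates after an effectively bounded number of steps. Turning this descent into an explicit $B(r,s)$ is where the bulk of the technical bookkeeping lies, and it is the step I expect to be the hardest to make fully effective.

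\textbf{Auxiliary quantitative inputs.} Two further ingredients are needed to make the descent yield a computable bound. First, I would invoke Proposition~\ref{prop:main} to bound the number of irreducible components of each intermediate differential variety $V_N$ in terms of the orders and degrees of its defining equations, replacing the classical B\'ezout-type bounds used in the algebraic case. Second, I would need effective control, in terms of $r$ and $s$, on the Kolchin polynomials of those components and on how quickly the Kolchin-polynomial descent can proceed; for this the hypothesis $\Char k = 0$ is essential so as to use Noetherianity of the Kolchin topology and the standard structure theory of ordinary $\delta$-varieties. Combining the component bound from Proposition~\ref{prop:main} with the effective Kolchin-polynomial descent produces a concrete $B(r,s)$, independent of $q$ and of $k$, and completes the proof.
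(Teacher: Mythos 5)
Your proposal correctly identifies the broad strokes — the easy direction, the reduction to ordinary differential algebra, the replacement of codimension descent by Kolchin-polynomial descent, and the role of Proposition~\ref{prop:main} in place of B\'ezout — and so is in the same family as the paper's argument. But several ingredients that make the argument actually close are missing or misstated.

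First, the framing of the hard direction via a descending chain $W_0 \supseteq W_1 \supseteq \cdots$ of Kolchin closures of projections onto the $\bm{x}$-coordinates is not well-posed: as $N$ grows, so does the number of $\bm{x}$-coordinates, so the $W_N$ live in different ambient spaces and are not comparable by inclusion. More importantly, the theorem is about triviality of the intersection of the full $\delta$-$\sigma$-ideal with $k[\bm{x}_\infty]$, a statement about ideal membership, and the paper does not reduce this to stabilization of projections. The bridge the paper uses is a \emph{Nullstellensatz} pair: Proposition~\ref{prop:strongnullstellensazt} (solutions in sequence rings over $\delta$-fields) converts the ideal-theoretic hypothesis into the nonexistence of a solution in $K^{\mathbb{Z}}$, while the effective differential Nullstellensatz from~\cite{OPV2018} converts the assumed triviality of the truncated intersection into the existence of a \emph{partial} solution of bounded length. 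You never invoke either of these, and without them there is no way to compare the ideal statement at finite truncation with the one at infinite order.

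Second, the mechanism that makes the Kolchin-polynomial descent effective is the \emph{trains} formalism from Section~\ref{subsec:trains}. The objects one inducts on are not components of the $V_N$ themselves, but trains $(Y_1, \ldots, Y_\ell)$ of irreducible $\delta$-subvarieties in $X$, and the descent in Lemma~\ref{thm:bound} constructs a strictly decreasing sequence $\omega_0 > \omega_1 > \cdots$ of Kolchin polynomials of trains, each accompanied by a computed bound $B_i$, using the component bound (Corollary~\ref{cor:numberofcomponents}) applied to the fibered products $\mathbf{W}_{B_i}$ and the order bound of Lemma~\ref{lm-rittnumber} to control the free terms $b_{i}$ of the Kolchin polynomials. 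Your ``induction along the well-ordered set of Kolchin polynomials'' gestures in the right direction but does not specify what objects carry the Kolchin polynomial being decreased, nor why a strict decrease is forced at each stage, nor why the numerical bookkeeping terminates after an effectively computable number of steps; in fact the termination argument tracks the leading coefficient $a_i = \delta\text{-}\dim$ and the free term $b_i = \operatorname{ord}$ of each $\omega_i$ separately via~\eqref{eq:bound_i} and~\eqref{eq:bound_b}, which is the heart of effectivity.

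Finally, you remark that $B(r,s)$ should be independent of $q$ but do not say how. In the paper this is achieved by treating the $\bm{x}$-variables as parameters: the trains and the whole geometric apparatus are built only in the $\bm{y}$-variables, working over the fields $k(\bm{x}_\infty)$ and $k(\bm{x}_B)$, whereas your ``labeling trick'' promotes both $\bm{x}^{(i)}$ and $\bm{y}^{(i)}$ to new differential indeterminates, which would make any resulting bound depend on $q$. In summary: the proposal names the right tools (DEP, Kolchin polynomials, Proposition~\ref{prop:main}) but is missing the Nullstellensatz bridge on both ends, the trains structure that organizes the descent, and the parameter treatment of $\bm{x}$; filling these gaps is not mere bookkeeping but the substance of Sections~\ref{subsec:sequences},~\ref{subsec:trains}, and~\ref{subsec:bounds}.
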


 By setting $q=0$ in Theorem~\ref{thm:main} and using Proposition~\ref{prop:strongnullstellensazt}, we obtain:
 
 \begin{cor}[Effective Nullstellensatz]For all  non-negative integers $r,s$, there exists a computable $B = B(r,s)$ 
  such that, for all:
  \begin{itemize}
   \item $\delta$-$\sigma$-fields
    $k$ with $\Char k =0$, and   
\item  sets of $\delta$-$\sigma$-polynomials $F\subset k[\bm{y}_s]$, where $\bm{y}=y_1,\ldots,y_r$ and $\deg_{\bm{y}}F\leqslant s$, 
   \end{itemize}
   the following statements are equivalent:
   \begin{enumerate}
  \item There exists a $\delta$-field $L$ such that $F=0$ has a sequence solution in $L$.
   \item $1 \notin \langle \sigma^i(F)\mid i\in [0,B] \big\rangle^{(B)}$.
   \item  There exists a field $L$ such that the polynomial system
     $\big\{\sigma^i(F)^{(j)}=0\mid i,j \in [0,B]\big\}$
      in the finitely many unknowns $\bm{x}_{B+s}$ has a solution in  $L$.
   \end{enumerate}
 \end{cor}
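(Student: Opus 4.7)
The plan is to derive the corollary by chaining together Theorem~\ref{thm:main} specialized to $q = 0$, the abstract Nullstellensatz (Proposition~\ref{prop:strongnullstellensazt}), and the classical Hilbert Nullstellensatz applied in the finitely generated polynomial ring $k[\bm{y}_{B+s}]$. The bound $B = B(r,s)$ is simply the one produced by Theorem~\ref{thm:main}. First, I would specialize Theorem~\ref{thm:main} to $q = 0$: in that case $k[\bm{x}_\infty] = k[\bm{x}_B] = k$, so for any ideal $I$ of $k[\bm{y}_\infty]$ (or of $k[\bm{y}_B]$) the condition $I \cap k = \{0\}$ reduces to $1 \notin I$. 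Thus Theorem~\ref{thm:main} becomes the statement that the $\delta$-$\sigma$-ideal $\langle \sigma^i(F) \mid i \in \mathbb{Z}_{\geqslant 0} \rangle^{(\infty)}$ in $k[\bm{y}_\infty]$ is proper if and only if condition~(2) of the corollary holds. Applying Proposition~\ref{prop:strongnullstellensazt}, which characterizes the existence of a sequence solution in some $\delta$-$\sigma$-field $L$ by exactly the condition that the $\delta$-$\sigma$-ideal generated by $F$ in $k[\bm{y}_\infty]$ is proper, then yields (1)~$\Longleftrightarrow$~(2).

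For (2)~$\Longleftrightarrow$~(3), I would observe that the generators $\delta^j \sigma^i(f)$ for $f \in F$ and $0 \leqslant i,j \leqslant B$ have $\delta$-order and $\sigma$-order bounded by $B + s$, so the ideal appearing in (2) is the extension to $k[\bm{y}_\infty]$ of an ordinary ideal $J \subset k[\bm{y}_{B+s}]$ in finitely many variables, and $1$ lies in one iff it lies in the other (since $k[\bm{y}_\infty]$ is a free polynomial extension of $k[\bm{y}_{B+s}]$). By the classical Hilbert Nullstellensatz over $k$, $1 \notin J$ if and only if the finite polynomial system $\{\sigma^i(f)^{(j)} = 0 : f \in F,\ 0 \leqslant i, j \leqslant B\}$ in the variables $\bm{y}_{B+s}$ admits a common solution in some field extension $L$ of $k$, which is precisely condition~(3).

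The main thing to be careful about is not a genuine obstacle but a bookkeeping matter: when $q = 0$ the ``nontrivial consequence in the $\bm{x}$ variables'' conclusion of Theorem~\ref{thm:main} degenerates to ``the ideal contains $1$'', and this translation is what converts the elimination statement of Theorem~\ref{thm:main} into the consistency statement~(2) of the corollary. The real mathematical content is entirely contained in Theorem~\ref{thm:main} and Proposition~\ref{prop:strongnullstellensazt}; beyond this bookkeeping step, only the classical Hilbert Nullstellensatz is required to complete the equivalence with~(3).
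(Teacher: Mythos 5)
Your proof is correct and follows exactly the route the paper indicates: specialize Theorem~\ref{thm:main} to $q=0$ (so that ``$\cap\,k[\bm{x}_B]=\{0\}$'' degenerates to ``$1\notin$'') to get~(2), apply Proposition~\ref{prop:strongnullstellensazt} with $f=1$ for (1)$\iff$(2), and use the weak Nullstellensatz in the finitely generated polynomial ring to get (2)$\iff$(3). One minor remark: the last step is really just the abstract weak Nullstellensatz (a proper ideal lies in a maximal ideal, whose residue field gives $L$) rather than the classical Nullstellensatz over an algebraically closed field, and the ``$\bm{x}_{B+s}$'' in item~(3) should read ``$\bm{y}_{B+s}$'' since $q=0$, as you implicitly corrected.
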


\section{Definitions and notation used in the proofs}
\label{sec:defproof}

Let $\mathcal{R}$ be a $\delta$-$\sigma$-ring.
\begin{itemize}
\hypertarget{Rrs}
\item For $r,s\in\mathbb N$, let $\mathcal{R}[\bm{y}_{r,s}]$ and $\mathcal{R}[\bm{y}_{\infty,s}]$ denote the polynomial ring \[\mathcal{R}[\delta^i\sigma^jy_k\mid 0\leqslant i\leqslant r, 0\leqslant j \leqslant s; 1\leqslant k\leqslant n]\] and the $\delta$-ring \[\mathcal{R}[\delta^i\sigma^jy_k\mid i\geqslant 0,\, 0\leqslant j \leqslant s;\, 1\leqslant k\leqslant n],\] respectively. Additionally, $\mathcal{R}(\bm{y}_{r,s})$ and $\mathcal{R}(\bm{y}_{\infty,s})$ denote their fields of fractions.
\item The radical of an ideal $I$ in a commutative ring $R$ is denoted by $\sqrt{I}$.\hypertarget{deltrdeg}{}
\item  Let $k\subset L$ be two $\delta$-fields.
A subset $S\subset L$ is said to be $\delta$-independent over $k$, if the set $\{\delta^ks\mid k\geqslant0,\,s\in S\}$ is algebraically independent over $k$.
The cardinality of any maximal subset of $L$ that is 
$\delta$-independent over $k$ is denoted by $\deltrdeg L/k$.
\item In what follows, we will consider every $\delta$-field $(k, \delta)$ as a $\delta$-$\sigma^*$-field with respect to $\delta$ and the identity automorphism.
From this standpoint, the ring of differential polynomials over $k$ in $\bm{y}$ (see~\citep[Chapter~I, \S~6]{Kol}) can be realized as $k[\bm{y}_{\infty, 0}] \subset k[\bm{y}_{\infty}]$. 
We use $k(\bm a_{\infty,0})$ to denote the differential field extension of $k$ generated by a tuple $\bm{a}$.
\end{itemize}

 \begin{definition}
 A $\delta$-field $K$ is called {\em differentially closed} if, for all $F \subset K[\bm{y}_{\infty,0}]$ and $\delta$-fields $L$ containing $K$, the existence of a solution to $F=0$ in $L$ implies the existence of a solution to $F=0$ in $K$.
 \end{definition}
 
\begin{definition}[Differential varieties and $\diffspec$] \label{def:varietyanddiffspec}    Let $(K,\delta)$ be a differentially closed field containing a differential field $(k,\delta)$ and $\bm{y} = y_1,\ldots,y_n$.
 \begin{itemize}
\item For  $F\subset  
k[\hyperlink{Rrs}{\bm{y}_{\infty,0}}]$, we write 
 \[\mathbb{V}(F) = \{\bm{a} \in K^n\mid \forall f\in F\, f(\bm{a})=0\}.\]
     \item A subset $X \subset K^n$ is called a {\em differential variety} over $k$ if there exists $F\subset  k[\bm{y}_{\infty,0}]$ such that $X=\mathbb{V}(F)$.\hypertarget{diffspec}{}
     \item For a subset $X \subset K^n$, we also write $X = \diffspec R$ if there exists 
     $F\subset K[\bm{y}_{\infty,0}]$ 
     such that $X =\mathbb{V}(F)$ and 
     $R = K[\bm{y}_{\infty,0}]/\langle F\rangle^{\hyperlink{diffideal}{(\infty)}}$
     (note that $R$ is not assumed to be reduced).  We define 
     $R_X :=  K[\bm{y}_{\infty,0}]\big/\sqrt{\langle F\rangle^{(\infty)}}$.
     \item A differential variety $\mathbb{V}(F)$ is called {\em irreducible} if $\sqrt{\langle F\rangle^{(\infty)}}$ is a prime ideal.
     \item The {\em generic point} $(a_1,\ldots,a_n)$ of  an irreducible $\delta$-variety $X=\mathbb{V}(F)$ is the image of the $\bm{y}$ under the homomorphism $K[\bm{y}_{\infty,0}]\to  K[\bm{y}_{\infty,0}]\big/\sqrt{\langle F\rangle^{(\infty)}}$.\hypertarget{ddim}{}
     \item Taking differential varieties as the basic closed sets, we define \emph{the Kolchin topology} on $K^n$.\hypertarget{Kol}{}
     \item For a subset $S \subset K^n$, we define \emph{the Kolchin closure} of $S$ (denoted by $\overline{S}^{\kol}$) to be the intersection of all differential subvarieties of $K^n$ containing $S$.  
 \end{itemize}
    Let $X$ be an irreducible $\delta$-variety with the generic point $\bm{a} = (a_1,\ldots,a_n)$.
     \begin{itemize}
         \item The {\em differential dimension} of $X$, denoted by $\delta$-$\dim(X)$, is defined as  
     $\deltrdeg K(\bm{a}_{\infty,0})/K$.
           \item A {\em parametric set} of $X$ is a subset $\{y_{i}\mid i\in I\}\subset \{y_1,\ldots,y_n\}$ such that $\{a_i\mid i\in I\}$ is a differential transcendence basis of $K(\bm{a}_{\infty,0})$ over $K$.\hypertarget{relord}{}
           \item The {\em relative order} of $X$ with respect to a parametric set $U=\{y_{i}\mid i\in I\}$, denoted by $\ord_{U}X$, is defined as 
      \[\ord_U X=\trdeg  K(\bm{a}_{\infty,0})\big/K\big((a_i, i\in I)_{\infty,0}\big).\hypertarget{ord}{}\]
           \item The {\em order} of $X$ is the maximum of all the relative orders of $X$  (\cite[Theorem 2.11]{GLY}), that is,   \[\ord(X)=\max\{\ord_UX\mid \text{$U$ is a parametric set of $X$}\}.\]  
     \end{itemize} 
     
\end{definition}

 \section{What is a
 universal $\delta$-$\sigma$-ring for solving equations?}\label{sec:seqfun}
 
 This section is devoted to answering the question 
 \begin{question}In what rings is it natural to look for solutions of  differential-difference equations?
 \end{question}
 We will show 
 that rings of sequences are universal solution rings in the abstract mathematical sense.
 More precisely, we prove an analogue of the Hilbert Nullstellensatz, Proposition~\ref{prop:strongnullstellensazt}.
 On the other hand, from the applications standpoint, it would be natural if solutions of delay-differential equations were functions defined on a 
 subset of the complex plane or real line.
 It turns out that these two seemingly contradictory standpoints can be viewed as closely related via the construction described below.
 
 \begin{definition}[Rings of meromorphic functions]
 \begin{itemize}[itemsep=0in]
 \item[]\hypertarget{MU}{}
     \item Let $U \subset \mathbb{C}$ be an open nonempty set. We denote the ring of meromorphic functions on $U$ by $\mathcal{M}(U)$.
     $\mathcal{M}(U)$ is a field if and only if $U$ is connected.
     \hypertarget{MD}{}
     \item Let $D \subset \mathbb{C}$ be a nonempty discrete set. 
     We define a ring $\mathcal{M}(D)$ of germs of meromorphic functions on $D$
     as the quotient 
     \[
       \mathcal{M}(D) := \{ (f, U) \mid U \text{ is open such that }D \subset U,\; f \in \mathcal{M}(U)  \}/\sim,
     \]
     where the equivalence relation $\sim$ is defined by
     \[
       \bigl( (f_1, U_1) \sim (f_2, U_2) \bigr) \iff (\forall z \in U_1 \cap U_2, \; f_1(z) = f_2(z)).
     \]
     
     \item For every open nonempty $U \subset \mathbb{C}$, $\mathcal{M}(U)$ is a $\delta$-ring with respect to the standard derivative.
     If $U = U + \{1\}$, 
     then $\mathcal{M}(U)$ can be considered as a $\delta$-$\sigma^\ast$-ring with respect to the shift automorphism 
  $\sigma(f)(z) = f(z-1)$. 
     Similarly, for a nonempty discrete $D \subset \mathbb{C}$, $\mathcal{M}(D)$ is a $\delta$-ring and.
     If additionally $D = D + \{1\}$, then $\mathcal{M}(D)$ is a $\delta$-$\sigma^\ast$-ring with $\sigma$ sending the equivalence class of $(f(z), U)$ to the equivalence class of $(f(z - 1), U + \{1\})$. 
 \end{itemize}
 \end{definition}
 
 \begin{definition}[Transforms between functions and sequences]\label{def:seq_to_fun}
 {\color{white}Dirty hack}
 \begin{itemize}[itemsep=0in]
     \item We define $S := \{z \in \mathbb{C} \mid -0.5 < \operatorname{Re} z < 0.5\} \subset \mathbb{C}$.
     
     \item Consider a nonempty  open subset $U \subset \mathbb{C}$ such that $U = U + \{1\}$.
     Then we define a map \[
     \varphi_U \colon \mathcal{M}(U) \to (\mathcal{M}(U\cap S))^{\mathbb{Z}}
     \]
     as follows.
     For every $f \in \mathcal{M}(U)$ and every $j \in \mathbb{Z}$, we define $f_j \in \mathcal{M}(U \cap S)$ by $f_j(z) := f(z + j)$.
     Then we set $\varphi_U(f) := (\ldots, f_{-1}, f_0, f_1, \ldots)$.
     One can check that $\varphi_U$ defines an injective  homomorphism of $\delta$-$\sigma^\ast$-rings, where $(\mathcal{M}(U \cap S))^{\mathbb{Z}}$ bears a $\delta$-$\sigma^\ast$-ring structure as descibed in Definition~\ref{def:sequences}.
     The same can be done for a nonempty discrete $D \subset \mathbb{C}$ such that $D = D + \{1\}$ and $D \cap \partial S = \varnothing$.
     
     \item Consider a nonempty open subset $U_0 \subset S$. We define a map 
     \[
     \psi_{U_0}\colon (\mathcal{M}(U_0))^{\mathbb{Z}} \to \mathcal{M}(U_0 + \mathbb{Z})
     \]
     as follows.
     For every $\{f_j\}_{j \in \mathbb{Z}} \in (\mathcal{M}(U_0))^{\mathbb{Z}}$, we define a function $f \in \mathcal{M}(U_0 + \mathbb{Z})$ by setting $f(z)|_{
     U_0
     + \{j\}} := f_j(z - j)$  for every $j \in \mathbb{Z}$.
     Then we define $\psi_{U_0}(\{f_j\}_{j \in \mathbb{Z}}) := f$.
     One can check that $\psi_{U_0}$ defines an isomorphism of $\delta$-$\sigma^\ast$-rings.
     The same can be done for a nonempty discrete $D \subset S$.
 \end{itemize}
 \end{definition}

In Section~\ref{subsec:germs}, we show that  $\mathcal{M}(\mathbb{Z})$ is a universal solution ring for $\delta$-$\sigma$-equations over $\mathbb{C}$ (Proposition~\ref{prop:strongnullstellensazt_germs}) and derive a version of our effective elimination theorem for this case (Corollary~\ref{cor:elim_meromorphic}).
 Moreover, in Section~\ref{subsec:not_embedding}, we show that there exists a system of $\delta$-$\sigma$-equations that
 \begin{itemize}[itemsep=0in]
 \item has a solution in $\mathcal{M}(\mathbb{Z})$ but,
 \item for every open $U \subset \mathbb{C}$ such that $U = U + \{1\}$, does not have a solution in $\mathcal{M}(U)$.
 \end{itemize}
 %%%%%%%%%%%%%%%%%%%%%%%%%%%%%%%%%%%%%%%
 
 \subsection{Solutions in sequences over $\delta$-fields}
 \label{subsec:sequences}
 
   \begin{prop}\label{prop:strongnullstellensazt}
  Let $n \in \mathbb{Z}_{\geqslant 0}$, $k$ be a $\delta$-$\sigma^\ast$-field.
  Then, for every $F \subset k[\hyperlink{kinfty}{\bm{y}_\infty}]$ and $f \in k[\bm{y}_\infty]$ with $\bm{y} = y_1, \ldots, y_n$,
  the following statements are equivalent:
  \begin{enumerate}[label=(\arabic*)]
     \item\label{item1:seq} for every $\delta$-$\sigma$-field extension $k \subset K$, $f$ vanishes on all solutions of $F=0$ in $K^\mathbb Z$.
  \item\label{item2:ideal} There exists $m\in\mathbb N$ such that 
  \[
  \sigma^m(f^m) \in \langle\sigma^j(F) \;|\; j\in \mathbb{Z}_{\geqslant 0}\rangle^{\hyperlink{diffideal}{(\infty)}} \subset k[\bm{y}_\infty].\]
  \end{enumerate}
  Moreover, if 
  $\sigma = \operatorname{id}_k$, then~\ref{item2:ideal} is equivalent to: for every $\delta$-$\sigma$-field extension $k \subset K$ with $\sigma = \operatorname{id}_K$, $f$ vanishes on all the sequence solutions of $F$ in $K^\mathbb Z$.
  \end{prop}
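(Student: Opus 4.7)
The direction (2)$\Rightarrow$(1) is a short evaluation argument: if $\sigma^m(f^m)\in I:=\langle\sigma^j(F)\mid j\geqslant 0\rangle^{(\infty)}$ and $\bm a\in (K^{\mathbb{Z}})^n$ is a sequence solution, then the corresponding $\delta$-$\sigma$-homomorphism $\phi_{\bm a}$ kills $I$, so $(\sigma^m\phi_{\bm a}(f))^m=\phi_{\bm a}(\sigma^m(f^m))=0$ in $K^{\mathbb{Z}}$. The shift $\sigma$ on $K^{\mathbb{Z}}$ is an automorphism and every coordinate lies in the field $K$, forcing $\phi_{\bm a}(f)=0$. All the content lies in the converse (1)$\Rightarrow$(2), which I plan to prove contrapositively: assuming $\sigma^m(f^m)\notin I$ for every $m$, I will construct a sequence solution of $F=0$ on which $f$ does not vanish.

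The construction rests on passing to the \emph{inversive closure}
\[
   \tilde R\;:=\;k\bigl[\delta^i\sigma^j y_k : i\geqslant 0,\ j\in\mathbb{Z},\ 1\leqslant k\leqslant n\bigr],
\]
a $\delta$-$\sigma^\ast$-$k$-algebra extending $R:=k[\bm y_\infty]$, and the $\delta$-$\sigma^\ast$-ideal $\tilde I:=\bigcup_{n\geqslant 0}\sigma^{-n}(\tilde R\cdot I)$ of $\tilde R$. The key preparatory step is the radical identity
\[
    \sqrt{\tilde I}\cap R\;=\;\{s\in R\mid \exists\, m\geqslant 0,\ \sigma^m(s^m)\in I\}.
\]
The containment $\supseteq$ is direct: if $\sigma^m(s^m)\in I$, then by $\sigma^{-1}$-stability of $\tilde I$ we get $s^m\in\sigma^{-m}(\tilde I)\subseteq\tilde I$. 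For $\subseteq$, observe that $\tilde R$ is a free polynomial extension of $R$ (adjoining the formally inverse generators $\sigma^{-j}y_i$ and their derivatives), hence faithfully flat, so $\tilde R\cdot I\cap R=I$; given $s\in R$ with $s^\ell\in\tilde I$, there is some $n$ with $\sigma^n(s^\ell)\in \tilde R\cdot I\cap R=I$, and enlarging to $m:=\max(n,\ell)$ yields $\sigma^m(s^m)\in I$. This radical identity is the technical heart of the argument, since it couples faithful flatness of $\tilde R/R$ with $\sigma^{-1}$-stability of $\tilde I$.

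Under the contrapositive hypothesis, the identity gives $f\notin\sqrt{\tilde I}$. As $\Char k=0$, the ring $\tilde R$ is a $\mathbb{Q}$-algebra, so Kolchin's theorem (radical $\delta$-ideals are intersections of the prime $\delta$-ideals containing them) supplies a prime $\delta$-ideal $P\subset\tilde R$ with $\tilde I\subset P$ and $f\notin P$. Put $L_0:=\Frac(\tilde R/P)$, a $\delta$-field; since $\sigma_k$ is a $\delta$-automorphism of $k\subset L_0$, we may embed $L_0$ into a larger $\delta$-field $L$ on which $\sigma_k$ extends to a $\delta$-endomorphism (for example, take $L$ to be a differential closure of $L_0$ and extend $\sigma_k$ by model completeness of differentially closed fields), making $L$ a $\delta$-$\sigma$-field extension of $k$. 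Let $\tilde\phi:\tilde R\to L$ be the canonical $\delta$-$k$-homomorphism, and define $\bm a\in(L^{\mathbb{Z}})^n$ by $a_i:=\bigl(\tilde\phi(\sigma^j y_i)\bigr)_{j\in\mathbb{Z}}$. A direct check shows the induced $\delta$-$\sigma$-homomorphism $\phi_{\bm a}:R\to L^{\mathbb{Z}}$ kills $F$ (for every $g\in F$ and every $j\in\mathbb{Z}$, $\sigma^j(g)\in\tilde I\subset P$ by $\sigma^\ast$-stability) while $\phi_{\bm a}(f)_0=\tilde\phi(f)\neq 0$; so $\bm a$ is a sequence solution on which $f$ does not vanish, contradicting (1). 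For the ``moreover'' clause $\sigma_k=\operatorname{id}_k$, one may set $\sigma_L:=\operatorname{id}_L$ without enlarging $L_0$, and the same construction produces a $K=L$ with $\sigma_K=\operatorname{id}_K$, giving the claimed refinement.
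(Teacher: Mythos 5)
Your argument is correct and follows essentially the same route as the paper's: pass to the inversive polynomial ring, show that the contrapositive hypothesis forces $f$ to lie outside the radical of the bilateral differential ideal, invoke the decomposition of radical $\delta$-ideals into primes to pick $P$ with $f\notin P$, extend $\sigma$ to a suitable $\delta$-field extension of $\Frac(\tilde R/P)$, and read off a sequence solution on which $f$ does not vanish. The only cosmetic difference is that you package the step ``$\sigma^m(f^m)\notin I$ for all $m$ implies $f\notin\sqrt{\tilde I}$'' as a clean radical identity proved via faithful flatness of $\tilde R/R$ and $\sigma^{-1}$-stability of $\tilde I$, whereas the paper argues inline by writing a finite membership certificate for $f^m$ in $\langle\sigma^j(F)\mid j\in[-m,m]\rangle^{(\infty)}$ and applying $\sigma^m$ with $m$ large enough to clear all negative shifts; these are two phrasings of the same contraction computation.
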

 \begin{proof}
The implication \ref{item2:ideal}~$\implies$~\ref{item1:seq} is straightforward because $\sigma$ is injective.
 It remains to show  \ref{item1:seq}~$\implies$~\ref{item2:ideal}. 
 Suppose that~\ref{item2:ideal} does not hold. Let
 \[
   \mathcal{I} := \sqrt{\langle\sigma^j(F) \;|\; j\in \mathbb{Z}\rangle^{(\infty)}} \subset k[\sigma^j(\hyperlink{Rrs}{\bm{y}_{\infty, 0}}) \mid j \in \mathbb{Z}].
 \]
By \cite[Theorem 2.1]{Kaplansky}, $\mathcal I$ is an intersection of prime $\delta$-ideals (maybe, an infinite intersection).
Assume that $f \in \mathcal{I}$.
Then there exists $m \in \mathbb{N}$ such that 
\[
f^m \in \langle \sigma^{j}(F) \mid j \in [-m, m] \rangle^{\hyperlink{diffideal}{(\infty)}}.
\]
Applying $\sigma^m$, we have $\sigma^m(f^m) \in \langle\sigma^j(F) \;|\; j\in \mathbb{Z}_{\geqslant 0}\rangle^{(\infty)}$, and this contradicts to the assumption that~\ref{item2:ideal} does not hold.
Thus, $f\notin \mathcal I$, so there exists a prime $\delta$-ideal $P$ with $\mathcal I\subseteq P$ and $f\notin P$.
 Let $\mathcal{U}_0$ be the quotient field of the $\delta$-domain $k[\sigma^j(\bm{y}_{\infty, 0}) \mid j \in \mathbb{Z}] / P$ that has a natural structure of $\delta$-field.
 Let $\mathcal{U}$ be a differentially closed field containing $\mathcal{U}_0$.
 \citep[Lemma~2.3]{MarkerModel} together with Zorn's lemma implies that $\sigma$ can be extended from $k$ to $\mathcal{U}$ so that $\mathcal{U}$ is a $\delta$-$\sigma$-field. 
 Note that if $\sigma|_k = \operatorname{id}$, then we can set $\sigma|_{\mathcal{U}} = \operatorname{id}$.
  Let 
  \[
  \eta=\big((\overline{\sigma^jy_1})_{j\in\mathbb Z},\ldots,(\overline{\sigma^jy_n})_{j
  \in \mathbb Z}\big)\in \mathcal{U}^{\mathbb Z}\times\cdots\times \mathcal{U}^{\mathbb Z},
  \]
  where $\overline{\sigma^jy_k}$ is the canonical image of $\sigma^jy_k$.
   Clearly, $\eta$ is a solution of $F = 0$ in $\mathcal{U}^\mathbb Z$
   but $f$ does not vanish at it.
   Thus, \ref{item1:seq} does not hold. So, \ref{item1:seq} implies \ref{item2:ideal}.
 \end{proof}

 \begin{remark}\label{rem:nullst}
   The proof of Proposition~\ref{prop:strongnullstellensazt} can be modified to show that the following conditions are also equivalent
   \begin{enumerate}[label=(\arabic*)]
     \item\label{item:seq} for every $\delta$-$\sigma$-field extension $k \subset K$, $f$ vanishes on all solutions of $F = 0$ in $K^{\mathbb{N}}$.
    
  \item\label{item:ideal} There exists $m\in\mathbb \mathbb{Z}_{\geqslant 0}$ such that
  \[
  f^m \in \langle\sigma^j(F) \;|\; j\in \mathbb{Z}_{\geqslant 0}\rangle^{(\infty)} \subset k[\hyperlink{kinfty}{\bm{y}_\infty}].
  \]
   \end{enumerate}
 \end{remark}
 
 \begin{remark} \label{rem:nullst2}
   In the case $f = 1$ (so-called weak Nullstellensatz), the second condition of Proposition~\ref{prop:strongnullstellensazt} is equivalent to the second condition in Remark~\ref{rem:nullst}.
   Thus, for $f = 1$, all the conditions of Proposition~\ref{prop:strongnullstellensazt} and Remark~\ref{rem:nullst} are equivalent.
   However, they are not equivalent for general $f$ as the following example shows.
 \end{remark}
 
\begin{example} 
  Let $k = \mathbb{Q}$. Consider
  \[
  F = \{y^2 - \sigma(y), y^2 - \sigma^2(y)\} \quad \text{ and } \quad f = y(y - 1).
  \]
  Let $k \subset K$ be an extension of $\delta$-$\sigma$-fields and $\overline{a} = 
  (\ldots, a_{-1}, a_0, a_1, a_2, \ldots) \in K^{\mathbb{Z}}$ a solution of $F$.
  For every $i \in \mathbb{Z}$, we have
  \[
  a_{i - 1}^2 - a_i = a_{i - 1}^2 - a_{i + 1} = 0 \implies a_i = a_{i + 1}.
  \]
  Combining with $a_i^2 = a_{i + 1}$, we have $a_i^2 = a_i$.
  Thus, $f$ vanishes at $\overline{a}$.
  However,  $f$ does not vanish on the solution $(-1, 1, 1,\ldots)$ of $F=0$ in $\mathbb{Q}^{\mathbb{N}}$.
\end{example}

%%%%%%%%%%%%%%%

\subsection{Solutions in germs}
\label{subsec:germs}

\begin{prop}\label{prop:strongnullstellensazt_germs}
 For every $n \in \mathbb{Z}_{\geqslant 0}$, $F \subset \mathbb{C}[\hyperlink{kinfty}{\bm{y}_\infty}]$, and $f \in \mathbb{C}[\bm{y}_\infty]$ with $\bm{y} = y_1, \ldots, y_n$,
  the following statements are equivalent:
  \begin{enumerate}[label=(\arabic*),itemsep=0in]
     \item\label{item:seq_germ} $f$ vanishes on all the solutions of $F = 0$ in $\hyperlink{MD}{\mathcal{M}(\mathbb{Z})}$.
  \item\label{item:ideal_germ} There exists $m\in\mathbb N$ such that 
  \[
  \sigma^m(f^m) \in \langle\sigma^j(F) \;|\; j\in \mathbb{Z}_{\geqslant 0}\rangle^{\hyperlink{diffideal}{(\infty)}} \subset \mathbb{C}[\bm{y}_\infty].\]
  \end{enumerate}
  \end{prop}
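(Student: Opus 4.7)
The plan is to adapt the proof of Proposition~\ref{prop:strongnullstellensazt} with the abstract sequence ring $K^{\mathbb{Z}}$ replaced by $\mathcal{M}(\mathbb{Z})$. For the direction \ref{item:ideal_germ}$\Rightarrow$\ref{item:seq_germ}, I exploit two features of $\mathcal{M}(\mathbb{Z})$: it is reduced (via the natural identification $\mathcal{M}(\mathbb{Z}) \cong \prod_{n \in \mathbb{Z}} \mathcal{M}(\{n\})$, each $\mathcal{M}(\{n\})$ being a field of germs of meromorphic functions at a single point), and its $\sigma$ is injective, since $\mathcal{M}(\mathbb{Z})$ is a $\delta$-$\sigma^\ast$-ring. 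If $\bar a \in \mathcal{M}(\mathbb{Z})^n$ satisfies $F = 0$, then evaluating the ideal membership gives $\sigma^m(f(\bar a)^m) = 0$, whence $f(\bar a)^m = 0$ by injectivity of $\sigma$, and finally $f(\bar a) = 0$ by reducedness.

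For the converse I argue contrapositively. Assume \ref{item:ideal_germ} fails. Applying Proposition~\ref{prop:strongnullstellensazt} to $k = \mathbb{C}$ (with $\sigma = \operatorname{id}_{\mathbb{C}}$), its proof produces a prime $\delta$-ideal $P$ of $\mathbb{C}[\sigma^j(\bm{y}_{\infty,0}) \mid j \in \mathbb{Z}]$ not containing $f$, yielding the countably differentially generated $\delta$-field $\mathcal{U}_0 := \operatorname{Frac}(\mathbb{C}[\sigma^j(\bm{y}_{\infty,0})]/P)$. Since $\sigma|_{\mathbb{C}} = \operatorname{id}$, we may take $\sigma|_{\mathcal{U}_0} = \operatorname{id}$, and the canonical images of the $\sigma^j y_k$ then assemble into a sequence solution $\eta \in (\mathcal{U}_0^{\mathbb{Z}})^n$ of $F = 0$ at which $f$ does not vanish.

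The crucial next step is to realize $\eta$ inside $\mathcal{M}(\mathbb{Z})^n$. I invoke a Seidenberg-type analytic realization theorem to obtain an injective $\delta$-homomorphism $\Psi\colon \mathcal{U}_0 \hookrightarrow \mathcal{M}(\{0\})$ restricting to the identity on $\mathbb{C}$ --- possibly after first passing to a quotient of $\mathcal{U}_0$ by a prime $\delta$-ideal missing $f(\eta)$ so as to trivialize any constants beyond $\mathbb{C}$. Applying $\Psi$ componentwise and then the isomorphism $\psi_{\{0\}}\colon \mathcal{M}(\{0\})^{\mathbb{Z}} \xrightarrow{\sim} \mathcal{M}(\mathbb{Z})$ of Definition~\ref{def:seq_to_fun} gives an injective $\delta$-$\sigma^\ast$-homomorphism $\Phi\colon \mathcal{U}_0^{\mathbb{Z}} \hookrightarrow \mathcal{M}(\mathbb{Z})$ over $\mathbb{C}$. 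Then $\Phi(\eta) \in \mathcal{M}(\mathbb{Z})^n$ satisfies $F = 0$, while $f(\Phi(\eta)) = \Phi(f(\eta)) \neq 0$ by injectivity of $\Phi$, contradicting \ref{item:seq_germ}.

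The main technical obstacle is the embedding step: classical Seidenberg theorems are stated for countable $\delta$-fields over $\mathbb{Q}$, whereas here we must embed a countably differentially generated extension of the uncountable constant field $\mathbb{C}$ fixing $\mathbb{C}$ pointwise. I expect to handle this in two stages --- first selecting the prime $\delta$-ideal $P$ (or an additional quotient) so that the constants of the resulting $\delta$-field remain $\mathbb{C}$, and then amalgamating classical Seidenberg applied to a countable $\delta$-subring containing $\eta$ and the countably many coefficients of $F$ and $f$, with the inclusion $\mathbb{C} \subset \operatorname{const}(\mathcal{M}(\{0\})) = \mathbb{C}$; this is feasible because $\mathbb{C}$ has infinite transcendence degree over any of its countable subfields. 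Once this realization is secured, the remaining verifications are routine bookkeeping via the transforms of Definition~\ref{def:seq_to_fun}.
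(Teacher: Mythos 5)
Your high-level strategy---apply Proposition~\ref{prop:strongnullstellensazt} to get an abstract sequence solution, then embed analytically via a Seidenberg-type realization theorem and transport through $\psi_0$---is the same as the paper's. But the way you set up the embedding step contains a genuine gap, and the paper handles it quite differently.

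You insist on an injective $\delta$-embedding $\Psi\colon \mathcal{U}_0 \hookrightarrow \mathcal{M}(\{0\})$ that restricts to the identity on \emph{all} of $\mathbb{C}$. You correctly sense that this is problematic: since $\mathcal{U}_0 \supset \mathbb{C}$ is uncountable, classical Seidenberg does not apply, and your proposed two-stage ``amalgamation'' is not actually an argument. Applying Seidenberg to a countable $\delta$-subring only produces \emph{some} embedding of that subring into $\mathcal{M}(\{0\})$; nothing forces it to agree with the given inclusion $\mathbb{C} \hookrightarrow \mathcal{M}(\{0\})$ on whatever constants happen to lie in that subring, and your remark about transcendence degree does not close this. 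Likewise, the preliminary step of choosing $P$ (or a further quotient) to make the constants of $\mathcal{U}_0$ exactly $\mathbb{C}$ is neither justified nor needed.

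The paper avoids the problem entirely by \emph{not} trying to fix $\mathbb{C}$ pointwise. It suffices to fix the countable subfield $E \subset \mathbb{C}$ generated over $\mathbb{Q}$ by the coefficients of $F$ and $f$. The paper applies Proposition~\ref{prop:strongnullstellensazt} with $k = E$ (not $k = \mathbb{C}$), replaces the resulting $K$ by the $\delta$-subfield generated by $E$ and the sequence entries (so that $K$ becomes at most countable), and then invokes \citep[Lemma~A.1]{MarkerModel} to obtain a $\delta$-embedding $\theta\colon K \to \mathcal{M}(0)$ that maps $E$ isomorphically onto $E \subset \mathbb{C} \subset \mathcal{M}(0)$. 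Since the coefficients of $F$ and $f$ all lie in $E$, an $E$-linear $\delta$-$\sigma^\ast$-embedding is enough to transport the sequence solution and the non-vanishing of $f$; no control of the constant field beyond $E$ is required. Replacing your ``fix $\mathbb{C}$ pointwise'' requirement with this reduction to $E$ is exactly what makes the argument go through.

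A minor note: your (\ref{item:ideal_germ})~$\Rightarrow$~(\ref{item:seq_germ}) direction, using injectivity of $\sigma$ together with reducedness of $\mathcal{M}(\mathbb{Z})$ via the identification $\mathcal{M}(\mathbb{Z}) \cong \prod_{n\in\mathbb{Z}} \mathcal{M}(\{n\})$ as a product of fields of germs, is correct and spells out what the paper leaves as ``straightforward.''
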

  
  \begin{proof}
    The implication \ref{item:ideal_germ}~$\implies$~\ref{item:seq_germ} is straightforward.
 It remains to show  \ref{item:seq_germ}~$\implies$~\ref{item:ideal_germ}. 
 Suppose that~\ref{item:ideal_germ} does not hold. 
 Let $E$ be the subfield of $\mathbb{C}$ generated by the coefficients of $F$ and $f$ over  $\mathbb Q$.
 Proposition~\ref{prop:strongnullstellensazt} implies that there exists a $\delta$-field $K \supset E$ such that $F = 0$ has a solution $\overline{a} = \{a_j\}_{j \in \mathbb{Z}}$ in $K^{\mathbb{Z}}$ such that $f(\overline{a}) \neq 0$.
 Replacing $K$ by its $\delta$-subfield generated by $E$ and $\{a_j\}_{j \in \mathbb{Z}}$, we can further assume that $K$ is an at most countably generated $\delta$-field extension of $E$.
 Hence $K$ is at most countable.
 \citep[Lemma~A.1]{MarkerModel} implies that there exists a homomorphism of $\delta$-fields $\theta \colon K \to \mathcal{M}(0)$ that maps $E \subset K$ isomorphically to $E \subset \mathbb{C} \subset \mathcal{M}(0)$.
 This homomorphism can be extended to an injective homomorphism $\theta \colon K^{\mathbb{Z}} \to (\mathcal{M}(0))^{\mathbb{Z}}$ of $\delta$-$\sigma^\ast$-algebras over $E$.
 Then the composition of $\theta$ with the isomorphism $\psi_0 \colon (\mathcal{M}(0))^{\mathbb{Z}} \to \mathcal{M}(\mathbb{Z})$ (see Definition~\ref{def:seq_to_fun}) is an injective homomorphism of $\delta$-$\sigma^\ast$ algebras over $E$.
 
 We set $b := \psi_0\circ \theta (\overline{a}) \in \mathcal{M}(\mathbb{Z})$.
 Then $b$ is a solution of $F = 0$ and, since $\psi_0\circ \theta$ is injective, $f$ does not vanish at $b$.
 This contradicts~\ref{item:seq_germ}.
  \end{proof}

Combining Proposition~\ref{prop:strongnullstellensazt_germs} with Theorem~\ref{thm:main}, we obtain:

\begin{cor}\label{cor:elim_meromorphic}
  For all  non-negative integers $r,s$, there exists a computable $B = B(r,s)$ 
  such that, for all:
  \begin{itemize}
  \item non-negative integer $q$,
  \item a set of $\delta$-$\sigma$-polynomials $F\subset \mathbb{C}[\hyperlink{kinfty}{\bm{x}_\infty,\bm{y}_s}]$, where $\bm{x} =x_1,\ldots,x_q$, $\bm{y}=y_1,\ldots,y_r$, and $\deg_{\bm{y}}F\leqslant s$,
   \end{itemize}
   the following statements are equivalent
   \begin{itemize}
       \item there exists a nonzero $g \in \mathbb{C}[\bm{x}_{\infty}]$ that vanishes on every solution of $F = 0$ in $\mathcal{M}(\mathbb{Z})$;
       \item $\langle \sigma^i(F)\mid i\in [0,B] \big\rangle^{\hyperlink{diffideal}{(B)}}\cap \mathbb{C}[\bm{x}_{\infty}] \neq \{0\}$.
   \end{itemize}
 \end{cor}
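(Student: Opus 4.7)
The plan is to derive Corollary~\ref{cor:elim_meromorphic} by assembling the two main inputs already in hand: the effective bound $B(r,s)$ produced by Theorem~\ref{thm:main}, and the Nullstellensatz for germs in Proposition~\ref{prop:strongnullstellensazt_germs}. The first result bridges the infinitary $\delta$-$\sigma$-ideal and its truncation at level $B$, while the second translates between the ideal-theoretic condition and vanishing on every solution in $\mathcal{M}(\mathbb{Z})$. Chaining the two yields precisely the stated equivalence.

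For the implication from existence of a nonzero vanishing $g$ to nontriviality of the truncated ideal, I would apply Proposition~\ref{prop:strongnullstellensazt_germs} to $g$ to produce $m\in\mathbb{N}$ with $\sigma^m(g^m)\in\langle\sigma^i(F)\mid i\in\mathbb{Z}_{\geqslant 0}\rangle^{(\infty)}$. Because $\sigma$ acts injectively on $\mathbb{C}[\bm{x}_\infty]$, the element $\sigma^m(g^m)$ is a \emph{nonzero} member of $\mathbb{C}[\bm{x}_\infty]$ lying in the full $\delta$-$\sigma$-ideal, so the intersection $\langle\sigma^i(F)\mid i\in\mathbb{Z}_{\geqslant 0}\rangle^{(\infty)}\cap\mathbb{C}[\bm{x}_\infty]$ is nontrivial. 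Theorem~\ref{thm:main} with $B=B(r,s)$ then lets me replace this full ideal by the truncation $\langle\sigma^i(F)\mid i\in[0,B]\rangle^{(B)}$ without losing nontriviality on the intersection, which is exactly the right-hand condition (using $\mathbb{C}[\bm{x}_B]\subset\mathbb{C}[\bm{x}_\infty]$).

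For the converse, no further machinery is needed. Any nonzero $h\in\langle\sigma^i(F)\mid i\in[0,B]\rangle^{(B)}\cap\mathbb{C}[\bm{x}_\infty]$ is by construction a polynomial combination of elements of the form $\delta^{\ell}\sigma^{i}(f)$ with $f\in F$. For any solution $\bm{a}$ of $F=0$ in $\mathcal{M}(\mathbb{Z})$, the evaluation map $\phi_{\bm{a}}$ is a $\delta$-$\sigma$-homomorphism sending each $f\in F$ to zero, so it annihilates each such $\delta^{\ell}\sigma^{i}(f)$, and hence annihilates $h$; taking $g=h$ finishes this direction.

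The only point requiring attention, more bookkeeping than obstacle, is that Theorem~\ref{thm:main} is phrased with $F\subset k[\bm{x}_s,\bm{y}_s]$ whereas the corollary allows $F\subset\mathbb{C}[\bm{x}_\infty,\bm{y}_s]$ with arbitrary orders in the $\bm{x}$-variables and intersects against the full ring $\mathbb{C}[\bm{x}_\infty]$. Since the bound $B(r,s)$ depends only on parameters attached to $\bm{y}$ and the argument underlying Theorem~\ref{thm:main} is uniform in the ambient ring of $\bm{x}$-variables, neither discrepancy is a genuine obstruction: all the heavy lifting has already been carried out inside Theorem~\ref{thm:main} and Proposition~\ref{prop:strongnullstellensazt_germs}, and the corollary falls out by their combination.
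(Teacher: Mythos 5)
Your proof is correct and takes essentially the same approach the paper intends: the paper offers no explicit argument for this corollary beyond the remark ``Combining Proposition~\ref{prop:strongnullstellensazt_germs} with Theorem~\ref{thm:main}, we obtain,'' and your write-up is a faithful unpacking of that combination, including the observation that $\sigma$ acts injectively on $\mathbb{C}[\bm{x}_\infty]$ so that $\sigma^m(g^m)\neq 0$, and the routine converse via applying the $\delta$-$\sigma$-evaluation homomorphism. Your closing remark about the mismatch between $F\subset k[\bm{x}_s,\bm{y}_s]$ in Theorem~\ref{thm:main} versus $F\subset\mathbb{C}[\bm{x}_\infty,\bm{y}_s]$ here is the right point of caution and is resolved correctly: inspecting the proof of Theorem~\ref{thm:main3}, the $\bm{x}$-variables are absorbed into the ground field $k(\bm{x}_\infty)$ and the bound $A$ and $B_\delta$ depend only on the $\bm{y}$-parameters $(r,s,h,d)$, so the argument goes through verbatim with intersections against $\mathbb{C}[\bm{x}_\infty]$ rather than $\mathbb{C}[\bm{x}_B]$.
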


%%%%%%%%%%%%%%%%

\subsection{Solutions in meromorphic functions on open subsets of $\mathbb{C}$}
\label{subsec:not_embedding}

In this section, we will present a specific system of $\delta$-$\sigma$-equations~\eqref{eq:main} that has a solution in~$\mathcal{M}(\mathbb{Z})$ but, for any open $U \subset \mathbb{C}$ such that $U = U + \{1\}$, does not have a solution in $\hyperlink{MU}{\mathcal{M}(U)}$  (see Proposition~\ref{prop:solutions_in_domain}). 
We recall some relevant facts about the Weierstrass $\wp$-function:
\begin{itemize}
    \item Let $g_2, g_3 \in \mathbb{C}$ be the complex numbers such that the Weierstrass function $\wp(z)$ with periods $1$ and $i$ (the imaginary unit) is a solution of 
    \begin{equation}\label{eq:wp}
      (x')^2 = 4x^3 - g_2x - g_3.
    \end{equation}
    We will use the fact that every nonconstant solution of~\eqref{eq:wp} is of the form $\wp(z + z_0)$ for some $z_0 \in \mathbb{C}$, see~\cite[page~39, Korollar~F]{KK}.
    
    \item Recall that the field of doubly periodic  meromorphic functions on $\mathbb{C}$ with periods $1$ and $i$ is generated by $\wp(z)$ and $\wp'(z)$~\cite[page~8, Theorem~4]{Lang}.
    Let $\omega := 1 + \sqrt{2}i$, and consider a rational function $R(x_1, x_2) \in \mathbb{C}(x_1, x_2)$ such that
    \begin{equation}\label{eq:defR}
    \wp(z + \omega) = R(\wp(z), \wp'(z)).
    \end{equation}
\end{itemize}

%%%%%%%%%

\begin{prop}\label{prop:solutions_in_domain}
Consider the following system of algebraic differential-difference equations in the unknowns $x$, $y$, $w$:
\begin{equation}\label{eq:main}
\begin{cases}
  (x')^2 = 4x^3 - g_2 x - g_3,\\
  \sigma(x) = R(x, x'),\\
  y^3 = \frac{1}{x},\\
  x' w = 1.
\end{cases}
\end{equation}
  \begin{enumerate}[label=(\arabic*)]
      \item\label{item:exists_solution} System~\eqref{eq:main} has a solution in $\hyperlink{MZ}{\mathcal{M}(\mathbb{Z})}$.
      \item\label{item:no_solutions} For every nonempty open subset $U \subset \mathbb{C}$ such that $U = U + \{1\}$, system~\eqref{eq:main} does not have a solution in $\hyperlink{MU}{\mathcal{M}(U)}$. 
  \end{enumerate}
\end{prop}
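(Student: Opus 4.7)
My plan is to treat the two assertions separately. For part~(1), I will construct a solution as a family of germs at integers. Since every nonconstant solution of~\eqref{eq:wp} has the form $\wp(z + z_0)$, the natural choice is, for a generic $c \in \mathbb{C}$ and each $n \in \mathbb{Z}$, to set $x|_n$ equal to the germ of $\wp(z + c - n\omega)$ near~$n$. Writing $n + c - n\omega \equiv c - n\sqrt{2}i \pmod{\Lambda}$ with $\Lambda = \mathbb{Z} + \mathbb{Z}i$, the required genericity is that this quantity modulo~$\Lambda$ avoids, for every $n \in \mathbb{Z}$, the pole~$0$, the double zero $(1+i)/2$, and the half-periods $\{1/2,\,i/2,\,(1+i)/2\}$ of~$\wp$; since the obstructing set modulo~$\Lambda$ is countable, a generic $c$ works. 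Then $x|_n$ is holomorphic and nonvanishing at~$n$ with $x'|_n$ also nonvanishing, so I define $y|_n$ as a local holomorphic cube root of~$1/x|_n$ and $w|_n := 1/x'|_n$. Equations~(1),~(3),~(4) hold by construction; for equation~(2) we compute $\sigma(x)|_n(z) = x|_{n-1}(z-1) = \wp(z + c - (n-1)\omega - 1)$ and $R(x,x')|_n(z) = \wp(z + c - n\omega + \omega) = \wp(z + c - (n-1)\omega)$, which agree because $1 \in \Lambda$ is a period of~$\wp$.

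For part~(2), suppose toward contradiction that $(x,y,w) \in \mathcal{M}(U)^3$ is a solution. The fourth equation $x'w = 1$ forces $x$ to be nonconstant on every connected component of~$U$, and by the classification of nonconstant solutions of~\eqref{eq:wp} (applied locally and extended by analytic continuation), on each connected component $C$ of~$U$ we have $x|_C = \wp(z + z_0^C)|_C$ for some $z_0^C \in \mathbb{C}/\Lambda$. Applying $\sigma(x) = R(x,x')$ on a component $C$ (noting that $C-1$ is also a component since $U = U+1$), and using $\sigma(x)|_C(z) = \wp(z - 1 + z_0^{C-1}) = \wp(z + z_0^{C-1})$ (by $\wp$'s period~$1$) together with $R(x,x')|_C(z) = \wp(z + z_0^C + \omega)$, yields the shift relation
\[
z_0^{C-1} \equiv z_0^C + \omega \pmod{\Lambda}.
\]

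If any component $C$ of~$U$ is $\mathbb{Z}$-invariant (so $C-1 = C$), this forces $\omega \in \Lambda$, contradicting $\omega = 1 + \sqrt{2}i$ since $\sqrt{2}$ is irrational; in particular this already disposes of the case of connected~$U$, which is the conceptual heart of the argument. The main obstacle is the remaining case in which every $\mathbb{Z}$-orbit of components is infinite, so that the components split into chains $\{C + n : n \in \mathbb{Z}\}$ and the rigidity above alone does not suffice. To rule this case out I plan to exploit the cube-root equation together with the square-lattice geometry of~$\wp$: since $\wp$ has a double zero at $(1+i)/2 \bmod \Lambda$ and a double pole at $0 \bmod \Lambda$, the existence of a meromorphic cube root of $1/\wp(z + z_0^{C+n})$ on $C + n$ forces $C + n$ to avoid all zeros and poles of $\wp(z + z_0^{C+n})$. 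Translating back to~$C$ via the shift relation and $\omega - 1 = \sqrt{2}i$, the open set~$C$ must miss
\[
\bigcup_{n \in \mathbb{Z}}\bigl(-z_0^C + n\sqrt{2}i + \{0,\,(1+i)/2\} + \Lambda\bigr),
\]
which, by irrationality of~$\sqrt{2}$, is dense in a $\Lambda$-periodic family of vertical lines. Openness of~$C$ then forces $C$ to miss the full closure, confining~$C$ to a narrow vertical strip, and combining this with the analogous avoidance from the $w$-equation (zeros of~$\wp'$) should deliver the desired contradiction.
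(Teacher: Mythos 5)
Your part~(1) is valid and more elementary than the paper's. The paper builds a sequence solution in $K^{\mathbb{Z}}$ with $K = \overline{\mathcal{M}(\mathbb{C})}$ out of the shifted Weierstrass functions $\wp(z+j\omega)$, $\sqrt[3]{1/\wp(z+j\omega)}$, $1/\wp'(z+j\omega)$, and then invokes Propositions~\ref{prop:strongnullstellensazt} and~\ref{prop:strongnullstellensazt_germs} to transfer it into $\mathcal{M}(\mathbb{Z})$. You instead construct the germs at the integers directly, using a generic translate $c$ so that the local cube roots and reciprocals exist; this bypasses the Nullstellensatz machinery entirely. Just make sure the sign convention you use for $\sigma$ matches the one fixed in Definition~\ref{def:seq_to_fun}; with that care the construction is sound.

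Your part~(2), however, is incomplete, and the route you propose to close it has a genuine gap. The equation $x'w=1$ imposes no pointwise nonvanishing on $x'$: since $w\in\mathcal{M}(U)$ may have poles, $w$ can blow up exactly where $x'$ vanishes, and the equation's only content is that $x$ is nonconstant on each component (which is precisely how the paper uses it). So ``the analogous avoidance from the $w$-equation'' does not exist. Even granting it, the zeros of $\wp'$ are the half-periods, which project to the \emph{same} two vertical lines (real parts $0$ and $\tfrac12$ mod~$1$) as the zeros and poles of $\wp$, so intersecting the two avoidance sets buys nothing. The paper's argument is much shorter and avoids your case split on $\mathbb{Z}$-invariance altogether: after shifting so that $0\in U$ and letting $U_0$ be the component of $0$, one proves $x(z+s)=\wp(z+z_0+s\omega)$ for $z\in U_0$, $s\geqslant 0$ by exactly the induction you set up, then uses Kronecker's theorem to put a pole of $\wp$ inside $U_0+s$ for some $s$, and finally notes that $y^3=1/x$ would force the order-$2$ pole of $\wp$ to be divisible by $3$. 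Your observation that the obstruction set lies along vertical lines (because $\omega-1=\sqrt2\,i$ is purely imaginary, so $s\omega$ mod $\Lambda$ moves only vertically) is sharp and worth pressing: it means the orbit of $s\omega$ mod $\Lambda$ is one-dimensional rather than equidistributed in $\mathbb{C}/\Lambda$, which makes the Kronecker step sensitive to $\mathrm{Re}(z_0)$; choosing $\omega$ with $1$, $\mathrm{Re}\,\omega$, $\mathrm{Im}\,\omega$ linearly independent over $\mathbb{Q}$ (say $\omega=\sqrt3+\sqrt2\,i$) would give genuine two-dimensional equidistribution and make the whole argument robust.
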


\begin{proof}
  \underline{Proof of~\ref{item:exists_solution}.}
  Let $K$ be the algebraic closure of the field
  $\mathcal{M}(\mathbb{C})$. 
  We set 
  \[
    x_j = \wp(z + j\omega),  \quad y_j = \sqrt[3]{\frac{1}{\wp(z + j\omega)}}, \quad \text{ and } \quad w_j = \frac{1}{\wp'(z + j\omega)}.
  \]
The first equation in~\eqref{eq:main} holds for these sequences because every shift of $\wp(z)$ is its solution being an equation with constant coefficients.
The second equation in~\eqref{eq:main} holds because
\[
  x_{j + 1} = \wp(z + (j + 1)\omega) = R(\wp(z + j\omega), \wp'(z + j\omega)) = R(x_j, x_j')
\]
due to~\eqref{eq:defR}.
A direct computation shows that the last two equations in~\eqref{eq:main} also hold.
Thus, the system~\eqref{eq:main} has a solution in $K^{\mathbb{Z}}$.
Combining Propositions~\ref{prop:strongnullstellensazt} and~\ref{prop:strongnullstellensazt_germs}, we see that~\eqref{eq:main} has a solution in $\mathcal{M}(\mathbb{Z})$.

\underline{Proof of~\ref{item:no_solutions}.}
Assume the contrary, let $U \subset \mathbb{C}$ be such a subset and $(x(z), y(z),w(z))$ be such a solution.
  Since~\eqref{eq:main} is autonomous, we can assume that $0 \in U$ by shifting $U$ and the solution if necessary.
  We denote the connected component of $U$ containing $0$ by $U_0$.
  The last equation of~\eqref{eq:main} implies that $x(z)|_{U_0}$ is nonconstant.
  Then the first equation of~\eqref{eq:main} implies that there exists $z_0 \in \mathbb{C}$ such that 
  \begin{equation}\label{eq:xU0}
x(z)|_{D_0} = \wp(z + z_0).
  \end{equation}
  We will prove that, for every $z \in U_0$ and $s \in \mathbb{Z}_{\geqslant 0}$,
  \begin{equation}\label{eq:shift}
    x(z + s) = \wp(z + z_0 + s\omega)
  \end{equation}
  by induction on $s$.
  The base case $s = 0$ follows from~\eqref{eq:xU0}.
  Assume that~\eqref{eq:shift} holds for $s \geqslant 0$.
  Then, using the second equation in~\eqref{eq:main}, the inductive hypothesis, and~\eqref{eq:defR}, we have
  \[
  x(z + s + 1) = R(x(z + s), x'(z + s)) = R(\wp(z + z_0 + s\omega), \wp'(z + z_0 + s\omega)) = \wp(z + z_0 + (s + 1)\omega).
  \]
  This proves~\eqref{eq:shift}.

  Let $\varepsilon > 0$ be a real number such that $U$ contains the $\varepsilon$-neighbourhood of $0$.
  Kronecker's theorem implies that there exist $s \in \mathbb{Z}_{\geqslant 0}$ and $m, n \in \mathbb{Z}$ (which we fix) such that 
  \[
  |z_0 + s\omega - n - m i| < \varepsilon.
  \]
  We set $z_1 = n + mi - z_0 - s\omega$.
  Then, since $|z_1| < \varepsilon$, we have $z_1 \in U_0$, and so
  $z_1 + s \in U$. 
  \eqref{eq:shift} implies
  \[
  x(z_1 + s) = \wp(z_1 + z_0 + s\omega) = \wp(n + m i) = \infty.
  \]
  Then $y(z_1 + s) = 0$. 
  Let $d \geqslant 1$ be the order of zero of $y$ at $z_1 + s$.
  Then $x = \frac{1}{y^3}$ has a pole of order $3d$ at $z_1 + s$.
  We arrive at a contradiction with the fact that all the poles of $\wp$ are of order two \cite[page~8]{Lang}.
\end{proof}

%%%%%%%%%%%%%%%%%%%%%%%%%%%%%%%%%%%%%%%%%%

\section{Proof of the main result} 
\label{sec:proofmain}

The proofs are structured as follows.
In Section~\ref{subsec:big_field}, we embed the ground $\delta$-$\sigma$-field $k$ to a differentially closed $\delta$-$\sigma^\ast$-field $K$.
In Section~\ref{subsec:trains}, we extend the technique of trains (developed in~\cite{OPS} for difference equations) to the differential-difference case.
Section~\ref{subsec:bounds} begins with Section~\ref{sec:numberofcomponents}, in which we establish a bound (Corollary~\ref{cor:numberofcomponents}) for the number of components of a differential-algebraic variety.
This bound replaces the B\'ezout bound, extensively used in~\cite{OPS} but lacking in the differential-algebraic setting.
In Section~\ref{sec:bound_trains}, we show that the existence of a sufficiently long train implies the existence of a solution in $K^{\mathbb{Z}}$.
Finally, in Section~\ref{sec:main_proof}, we use these ingredients to prove the main result, Theorem~\ref{thm:main}.

\subsection{Constructing big enough field $K \supset k$}\label{subsec:big_field}

Throughout Section~\ref{sec:proofmain}
\begin{itemize}
    \item $k$ is the $\delta$-$\sigma$-field from Theorem~\ref{thm:main},
    \item $K$ is a fixed differentially closed  $\delta$-$\sigma^\ast$-field containing $k$. 
    The existence of such field follows from Lemma~\ref{lem:embedding}.
\end{itemize}

\begin{lemma}\label{lem:embedding}
 For every $\delta$-$\sigma$ field $k$ of characteristic zero, there exists an extension $k \subset K$ of $\delta$-$\sigma$-fields, where $K$ is a differentially closed $\delta$-$\sigma^*$-field.
\end{lemma}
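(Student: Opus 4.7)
The plan is to assemble $K$ in three moves: first enlarge $k$ so that $\sigma$ becomes bijective, then enlarge further to attain differential closedness, and finally repeat the first move once more to restore inversivity on top of the differentially closed field.

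I would begin by forming the inversive closure $k^\ast := \varinjlim(k \xrightarrow{\sigma} k \xrightarrow{\sigma} k \to \cdots)$. Because $\delta\sigma = \sigma\delta$, the transition maps are $\delta$-embeddings, so $\delta$ descends uniquely to the colimit and commutes with the induced automorphism of $k^\ast$. This yields a $\delta$-$\sigma^\ast$-field $k^\ast$ containing $k$ as a $\delta$-$\sigma$-subfield.

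Next, invoking Kolchin's theorem on the existence of differential closures in characteristic zero, I would embed $k^\ast$ into a differentially closed $\delta$-field $K_0$. The endomorphism $\sigma$ of $k^\ast$, composed with the inclusion $k^\ast \hookrightarrow K_0$, is a $\delta$-embedding $k^\ast \to K_0$. Using \citep[Lemma~2.3]{MarkerModel} together with Zorn's lemma --- exactly as in the proof of Proposition~\ref{prop:strongnullstellensazt} --- I would extend this to a $\delta$-endomorphism $\sigma \colon K_0 \to K_0$ that restricts to the original $\sigma$ on $k$. Finally, I would pass once more to the inversive closure $K := \varinjlim(K_0 \xrightarrow{\sigma} K_0 \xrightarrow{\sigma} \cdots)$, which upgrades $\sigma$ to a genuine automorphism while still keeping $k$ inside with its original $\delta$-$\sigma$-structure.

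The verification that differential closedness survives this last inversive closure is routine: any finite system of $\delta$-equations over $K$ has all its coefficients in some copy of $K_0$ in the directed system, and since $K_0$ is already differentially closed, the system is solvable in that copy, hence in $K$. The main obstacle is the middle step: the differential closure $K_0$ can be wildly transcendental over $k^\ast$, and extending $\sigma$ across it requires producing coherent images for every new differential transcendental. This is where one needs the strong homogeneity of differentially closed fields --- ultimately a consequence of $\omega$-stability of DCF$_0$ --- packaged into Marker's lemma, with Zorn then iterating the local one-element extensions into a global $\delta$-endomorphism of $K_0$.
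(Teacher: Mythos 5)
Your proof is correct but takes a genuinely different route from the paper's. The paper performs the inversive closure once (the Levin construction from~\citep[Proposition~2.1.7]{LevinBook}) and then invokes a single black box, \citep[Theorem~3.15]{BM07}, which embeds any $\delta$-$\sigma^\ast$-field of characteristic zero directly into a differentially closed $\delta$-$\sigma^\ast$-field. You instead unpack that black box by hand: inversive closure of $k$, then a differentially closed $\delta$-extension $K_0$, then push $\sigma$ forward to a $\delta$-endomorphism of $K_0$ via Marker's lemma and Zorn (the same device the paper uses in the proof of Proposition~\ref{prop:strongnullstellensazt}), and finally one more inversive closure with a check that differential closedness is inherited by the colimit. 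The paper's route is shorter and outsources the model-theoretic work to~\cite{BM07}; yours is more self-contained and recycles a technique already present elsewhere in the paper.

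Two points in your version deserve care. First, to extend $\sigma$ to an endomorphism of $K_0$ \emph{itself}, rather than merely to a $\delta$-embedding of $K_0$ into some strictly larger differentially closed field, you should take $K_0$ to be a sufficiently saturated model of $\mathrm{DCF}_0$ over $k^\ast$; the Kolchin differential closure (the prime model) you allude to need not be homogeneous enough for this, and $\omega$-stability alone does not rescue the step --- alternatively, iterate one-sided extensions $K_0 \to K_1 \to K_2 \to \cdots$ and pass to the union before inversive-closing. Second, for the final verification that the colimit stays differentially closed, the cleanest argument is that $\mathrm{DCF}_0$ is model complete, so the chain of differentially closed fields is an elementary chain and its union is again a model; your ``coefficients land in a finite stage'' argument also works once the Ritt--Raudenbush basis theorem is invoked to reduce arbitrary systems $F$ to finitely generated ones.
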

\begin{proof}
We will show that there exists a $\delta$-$\sigma^\ast$ field $K_0$ containing $k$.
  The proof of~\citep[Proposition~2.1.7]{LevinBook} implies that one can build an ascending chain of $\sigma$-fields
  \begin{equation}\label{eq:asc_chain}
  k_0 \subset k_1 \subset k_2 \subset \ldots
  \end{equation}
  such that, for every $i \in \mathbb{N}$, there exists an isomorphism $\varphi_i\colon k \to k_i$ of $\sigma$-fields,  $\sigma(k_{i + 1}) = k_i$, and $\varphi_i = \sigma\circ\varphi_{i + 1}$ for every $i \in \mathbb{N}$.
  We transfer the $\delta$-$\sigma$-structure from $k$ to $k_i$'s via $\varphi_i$'s.
  Then $\varphi_i = \sigma\circ \varphi_{i + 1}$ implies that the restriction of $\delta$ on $k_{i + 1}$ to $k_i$ coincides with the action of $\delta$ on $k_i$.
  We set $K_0 := \bigcup\limits_{i \in \mathbb{N}} k_i$.
  Since the action $\delta$ and $\sigma$ is consistent with the ascending chain~\eqref{eq:asc_chain}, $K_0$ is a $\delta$-$\sigma$-extension of $k_0 \cong k$.
  It is shown in~\cite[Proposition~2.1.7]{LevinBook} that the action of $\sigma$ on $K_0$ is surjective. 
  \cite[Theorem~3.15]{BM07} implies that $K_1$ can be embedded in a differentially closed $\delta$-$\sigma^\ast$-field $K$.
\end{proof}

\subsection{Partial solutions and trains}\label{subsec:trains}

  \begin{definition} \label{def-partialsolution}
   Let $F \subset k[\hyperlink{kinfty}{\bm{y}_\infty}]$, where $\bm{y}= y_1,\ldots,y_n$, be a set of $\delta$-$\sigma$-polynomials.
   Suppose $h=\max\{\hyperlink{ordsigmadelta}{\ord_\sigma}(f) \mid f \in F\}$. 
   A sequence  of tuples $(\overline{a}_1,\ldots,\overline{a}_n)\in K^{\ell+h}\times\cdots\times K^{\ell+h}$ is called a {\em partial solution} of $F$ of length $\ell$ if
   $(\overline{a}_1,\ldots,\overline{a}_n)$ is a $\delta$-solution of the system in $\bm{y}_{\infty,\ell+h-1}$:
   \[
   \{\sigma^i (F)=0 \;|\; 0\leqslant i\leqslant\ell-1\}.
   \]
  \end{definition}

  \begin{example}
  Let $k=\mathbb Q(t)$ be the $\delta$-$\sigma^\ast$-field with $\delta=\frac{d}{dt}$ and $\sigma(f(t))=f(t+1)$ for each $f(t)\in\mathbb Q (t)$.
  Let $f=t\cdot\delta(y)+\sigma(y)\in k[y_\infty]$. So $h=1$.
  A partial solution of $f$ of length $\ell$ is a sequence $(a_0,a_1,\ldots,a_{\ell})\in K^{\ell+1}$  such that
\[  (t+i)\cdot\delta(a_i)+a_{i+1}=0, \,i=0,1,\ldots,\ell-1.\]
A solution of $f$ in $K^{\mathbb N}$ is a sequence $(a_i)_{i\in \mathbb N}\in K^{\mathbb N}$
such that for each $i\in\mathbb N$, 
\[  (t+i)\cdot\delta(a_i)+a_{i+1}=0.\]
  \end{example}

  With the above set $F$ of $\delta$-$\sigma$-polynomials, we associate the following geometric data analogously to 
  \cite{OPS}:
  \begin{itemize}
  \item the $\delta$-variety $X\subset\mathbb A^H$ defined by $f_1=0,\ldots,f_N=0$ regarded as $\delta$-equations in 
 $k[\hyperlink{Rrs}{\bm{y}_{\infty,h}}]$
  with $H=n(h+1)$, and so \[X=\hyperlink{diffspec}{\diffspec} R_X,\quad R_X := 
  K[\bm{y}_{\infty,h}]\big/\sqrt{(f_1,\ldots,f_N)^{(\infty)}}
  ;\]
 \item two 
 projections $\pi_1,\pi_2:\mathbb A^H\longrightarrow \mathbb A^{H-n}$ defined by 
 \begin{align*}\pi_1(a_1,\ldots,\sigma^h(a_1);\ldots; a_n,\ldots,\sigma^h(a_n))&:=(a_1,\sigma(a_1),\ldots,\sigma^{h-1}(a_1);\ldots; a_n,\ldots,\sigma^{h-1}(a_n)),\\
  \pi_2(a_1,\ldots,\sigma^h(a_1);\ldots; a_n,\ldots,\sigma^h(a_n))&:=(\sigma(a_1),\ldots,\sigma^{h}(a_1);\ldots; \sigma(a_n),\ldots,\sigma^{h}(a_n)).
  \end{align*}
  \end{itemize}
  
  By $\sigma(X)$, we mean the $\delta$-variety in $\mathbb A^H$ defined by $f_1^{\sigma},\ldots,f_N^{\sigma}$, 
  where $f_i^{\sigma}$ is the result by applying $\sigma$ to the coefficients of $f_i$.

 \begin{definition}
 A sequence $p_1,\ldots,p_\ell\in\mathbb A^H$ is a {\em partial solution of the triple} $(X,\pi_1,\pi_2)$  if 
\begin{enumerate}[label=\arabic*)]
\item  for all $i$, $1\leqslant i \leqslant \ell$, we have $p_i\in\sigma^{i-1}(X)$ and
\item for all $i$, $1\leqslant i<\ell$, we have $\pi_1(p_{i+1})=\pi_2(p_i)$.
\end{enumerate}
 A two-sided infinite sequence with such a property is called a {\em solution of the triple} $(X,\pi_1,\pi_2)$.
 \end{definition}

 \begin{lemma}\label{lem:64}
 For every positive integer $\ell$, $F$ has a partial solution of length $\ell$ if and only if the triple $(X,\pi_1,\pi_2)$ has a partial solution of length $\ell.$
 System $F$ has a solution in $K^\mathbb Z$ if and only if the triple $(X,\pi_1,\pi_2)$ has a   solution.
 \end{lemma}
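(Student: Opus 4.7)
The plan is to exhibit an explicit windowing bijection between partial solutions of $F$ of length $\ell$ and partial solutions of the triple $(X,\pi_1,\pi_2)$ of length $\ell$; the second assertion then follows by applying the same recipe to two-sided infinite sequences.

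For the forward map, given a partial solution, I write $\overline{a}_k = (a_{k,0},\ldots,a_{k,\ell+h-1})$, interpreting $a_{k,m}$ as the value of $\sigma^m y_k$ (with the $\delta$-iterates $\delta^j a_{k,m}$ computed in the $\delta$-closed field $K$), and for each $i = 1,\ldots,\ell$ define
\[
p_i := \bigl(a_{1,i-1},\ldots,a_{1,i-1+h};\,\ldots;\,a_{n,i-1},\ldots,a_{n,i-1+h}\bigr) \in \mathbb{A}^{H}.
\]
Then $p_i \in \sigma^{i-1}(X)$ because $\sigma^{i-1}(F)$ vanishes under the substitution $\sigma^{i-1+b}y_k \mapsto a_{k,i-1+b}$, which is exactly the hypothesis that the original tuple is a $\delta$-solution of $\{\sigma^i(F) = 0 \mid 0 \leq i \leq \ell-1\}$; and $\pi_1(p_{i+1}) = \pi_2(p_i)$ because both sides equal the overlapping block $(a_{k,i},\ldots,a_{k,i+h-1})_{k=1}^n$.

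For the inverse map, I write $p_i = (p_{i,k,0},\ldots,p_{i,k,h})_{k=1}^n$ and set $a_{k,m} := p_{i,k,m-i+1}$ for any $i$ with $i-1 \leq m \leq i-1+h$; the gluing condition $\pi_1(p_{i+1}) = \pi_2(p_i)$ makes this assignment independent of the choice of $i$, so the $a_{k,m}$ are well defined, and $p_i \in \sigma^{i-1}(X)$ then translates back to $\sigma^{i-1}(F) = 0$ holding at the assembled tuple. The second assertion follows from the same recipe extended to $i \in \mathbb{Z}$; this is where invertibility of $\sigma$ on $K$ (Lemma~\ref{lem:embedding}) is needed, so that $\sigma^{i-1}(X)$ is defined for all $i \in \mathbb{Z}$. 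The argument is essentially index bookkeeping; the only point requiring care is the verification that the overlap condition actually enforces consistency across successive windows, so no serious obstacle is expected.
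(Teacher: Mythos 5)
Your proof is correct and is essentially the same windowing argument the paper uses: the forward map takes overlapping blocks of length $h+1$ to form the $p_i$, the reverse map reassembles the $a_{k,m}$ from the gluing condition $\pi_1(p_{i+1})=\pi_2(p_i)$, and the infinite case is the same recipe extended over $\mathbb{Z}$ (which the paper also dismisses with ``it suffices to show the first assertion''). Your write-up is just more explicit about the multi-index bookkeeping than the paper's, which tacitly treats the $a_j$ as $n$-tuples.
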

 \begin{proof}
  It suffices to show that the first assertion holds.
 Suppose that $(a_0,a_1,\ldots,a_{h+\ell-1})$ is a partial solution of $F$.
 Let $p_i=(a_{i-1},a_i,\ldots,a_{i-1+h})$ for $i=1,\ldots,\ell$.
 Then $p_1,\ldots,p_\ell$ is a partial solution of $(X,\pi_1,\pi_2)$ of length $\ell$.
 For the other direction, let $p_1,\ldots,p_\ell$ be a partial solution of $(X,\pi_1,\pi_2)$ of length $\ell$. 
 Since  $\pi_1(p_{i+1})=\pi_2(p_i)$, there exist { $a_0,\ldots,a_{h+\ell-1}\in K$} 
 such that,
 for each $i$, $p_i=(a_{i-1},a_i,\ldots,a_{i-1+h})$.
 Thus, $(a_0,a_1,\ldots,a_{h+\ell-1})$ is a partial solution of $F$ of length $\ell$.
 \end{proof}

  \begin{definition}[cf.~\cite{OPS}]
For $\ell\in\mathbb N$ or $+\infty$, a sequence of irreducible $\delta$-subvarieties $(Y_1,\ldots,Y_\ell)$ in $\mathbb{A}^H$ is said to be {\em a train of length $\ell$} in $X$ if
\begin{enumerate}[label=\arabic*)]
\item for all $i$, $1\leqslant i\leqslant \ell$, we have $Y_i\subseteq\sigma^{i-1}(X)$ and 
\item for all $i$, $1\leqslant i<\ell$, we have $\overline{\pi_1(Y_{i+1})}^{\hyperlink{Kol}{\kol}}=\overline{\pi_2(Y_i)}^{\kol}$.
\end{enumerate}
 \end{definition}
 
  \begin{lemma}\label{lem:66}
 For every train $(Y_1,\ldots,Y_\ell)$ in $X$,  there exists a partial solution $p_1,\ldots,p_\ell$ of $(X,\pi_1,\pi_2)$ such that 
 for all $i$, we have $p_i\in Y_i$. In particular, if there is an infinite train in $X$, then there is a solution of the triple  $(X,\pi_1,\pi_2)$.
 \end{lemma}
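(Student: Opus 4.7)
The plan is to build a compatible sequence of generic points $\eta_i$ of $Y_i$ in a common $\delta$-field extension of $K$, and then descend to a $K$-rational partial solution by invoking the differential closedness of $K$. The train condition $\overline{\pi_1(Y_{i+1})}^{\kol} = \overline{\pi_2(Y_i)}^{\kol}$ is precisely what enables the inductive matching, via the fact that the generic point of an irreducible $\delta$-variety is unique up to $\delta$-$K$-isomorphism.

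For the finite case $\ell \in \mathbb{N}$, I proceed by induction on $i$. Let $\eta_1$ be a generic point of $Y_1$ over $K$, realized in some $\delta$-field extension of $K$. Suppose $\eta_1, \ldots, \eta_i$ have been constructed in a common $\delta$-field extension $L_i$ of $K$, each $\eta_j$ generic in $Y_j$ over $K$, with $\pi_1(\eta_{j+1}) = \pi_2(\eta_j)$ for $j < i$. Then $\pi_2(\eta_i)$ is generic in $V_i := \overline{\pi_2(Y_i)}^{\kol}$ over $K$, and $V_i$ coincides with $\overline{\pi_1(Y_{i+1})}^{\kol}$ by the train condition. Pick any generic point $\tilde\eta_{i+1}$ of $Y_{i+1}$ over $K$; then $\pi_1(\tilde\eta_{i+1})$ is also generic in $V_i$, yielding a $\delta$-$K$-isomorphism $K\langle \pi_1(\tilde\eta_{i+1})\rangle \to K\langle \pi_2(\eta_i)\rangle$. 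Amalgamating $K\langle \tilde\eta_{i+1}\rangle$ with $L_i$ over this isomorphism inside a sufficiently large $\delta$-field $L_{i+1}$ yields a point $\eta_{i+1} \in L_{i+1}^H$ that is generic in $Y_{i+1}$ over $K$ and satisfies $\pi_1(\eta_{i+1}) = \pi_2(\eta_i)$.

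After $\ell$ steps, the tuple $(\eta_1, \ldots, \eta_\ell)$ satisfies a finite system of $\delta$-polynomial equations over $K$ comprising the defining equations of the $Y_i$ (finitely many by the Ritt--Raudenbush basis theorem) together with the coordinatewise compatibility $\pi_1(\eta_{i+1}) = \pi_2(\eta_i)$. Since this system is consistent in $L_\ell$ and $K$ is differentially closed, it has a solution $(p_1, \ldots, p_\ell) \in (K^H)^\ell$ with $p_i \in Y_i$, which is the desired partial solution of $(X, \pi_1, \pi_2)$. The infinite case follows analogously: iterating the inductive construction produces a compatible sequence in a $\delta$-field extension of $K$, which can be extended in both directions using the invertibility of $\sigma$ on $K$, and the descent to a $\mathbb{Z}$-indexed $K$-rational solution is obtained by a compactness argument in $\mathrm{DCF}_0$ applied to a sufficiently saturated $K$. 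The main subtlety throughout is the amalgamation step, which must simultaneously preserve the genericity of $\eta_{i+1}$ and enforce the compatibility with $\eta_i$—precisely what the matching Kolchin closures make possible.
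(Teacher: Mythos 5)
Your finite-length argument is correct but takes a genuinely different route from the paper's. The paper argues topologically: by induction on $\ell$ it exhibits a nonempty Kolchin-open subset $U \subseteq Y_\ell$ every point of which extends to a partial solution, the inductive step resting on Chevalley-type density of constructible images under $\pi_1,\pi_2$. You instead build, by amalgamating $\delta$-field extensions over the common Kolchin closures $\overline{\pi_2(Y_i)}^{\kol}=\overline{\pi_1(Y_{i+1})}^{\kol}$, a compatible tuple of generic points $(\eta_1,\ldots,\eta_\ell)$, and then descend to a $K$-rational tuple by differential closedness. This is closer in spirit to the paper's algebraic construction of the fiber product $\mathbf{W}_\ell$ in Lemma~\ref{lm-traincorrespondence} than to its proof of Lemma~\ref{lem:66}. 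What you lose relative to the paper's route is the stronger conclusion that a dense open subset of $Y_\ell$ consists of extendable points — precisely the form that feeds the paper's induction — and what you gain is an argument that sidesteps constructibility considerations entirely.

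For the infinite case there is a genuine gap: your compactness argument needs $K$ to be ``sufficiently saturated,'' but $K$ is fixed once and for all in Section~\ref{subsec:big_field} as \emph{some} differentially closed $\delta$-$\sigma^\ast$-field containing $k$, with no saturation hypothesis. The countable type you are realizing — each coordinate generic in $Y_i$ and all the compatibility equations — is not finitely satisfiable in $K$ itself unless you already know the conclusion, so $\aleph_0$-saturation (or $\aleph_1$-saturation) would really be needed. (To be fair, the paper's own proof also only establishes the finite Claim and does not spell out the passage to a two-sided infinite solution; in the application via Lemma~\ref{thm:bound} the infinite train is eventually $\sigma$-periodic, which is what should let one avoid saturation, but neither argument carries this out explicitly.)
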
 
\begin{proof}
The proof is similar to that of \cite[Lemma~6.8]{OPS}. 
To make the paper self-contained, we will give the details below.
To prove the existence of a partial solution of $(X,\pi_1,\pi_2)$ with the desired property,
it suffices to prove the following: 

\begin{claim} \em There exists a nonempty open (in the sense of the Kolchin topology) subset $U\subseteq Y_\ell$  such that for each $p_\ell\in U$, $p_\ell$ 
can
be extended to a partial solution $p_1,\ldots,p_\ell$ of  
 $(X,\pi_1,\pi_2)$ with $p_i\in Y_i\,(\forall i)$.
 \end{claim}
 
 We will prove the Claim by induction on $\ell$.
 For $\ell=1$, take $U=Y_1$. 
 Since each point in $Y_1$ is a partial solution of $(X,\pi_1,\pi_2)$ of length 1, the Claim holds for $\ell=1$.
 Now suppose we have proved the Claim for $\ell-1$.  
 So there exists a nonempty open subset $U_0\subseteq Y_{\ell-1}$    satisfying the desired property.
 Since $Y_{\ell-1}$ is irreducible, $U_0$ is dense in $Y_{\ell-1}.$
 So, $\pi_2(U_0)$ is dense in $\overline{\pi_2(Y_{\ell-1})}^{\hyperlink{Kol}{\kol}}=\overline{\pi_1(Y_{\ell})}^{\kol}$. 
Since $U_0$ is $\delta$-constructible, $\pi_2(U_0)$ is $\delta$-constructible too.
So, $\pi_2(U_0)$ contains a nonempty open subset of $\overline{\pi_1(Y_{\ell})}^{\kol}$.

Since $\pi_1(Y_{\ell})$ is $\delta$-constructible  and dense in $\overline{\pi_1(Y_{\ell})}^{\kol}$, 
$\pi_2(U_0)\cap\pi_1(Y_{\ell})\neq\varnothing$ is $\delta$-constructible  and dense in $\overline{\pi_1(Y_{\ell})}^{\kol}$. 
Let $U_1$ be a nonempty open subset of $\overline{\pi_1(Y_{\ell})}^{\kol}$ contained in $\pi_2(U_0)\cap\pi_1(Y_{\ell})$ and \[U_2=\pi_1^{-1}(U_1)\cap Y_{\ell}.\]
Then $U_2$ is a nonempty open subset of  $Y_{\ell}$. 
We will show that for each $p_\ell\in U_2$, there exists $p_{i}\in Y_i$ for $i=1,\ldots,\ell-1$ such that $p_1,\ldots,p_{\ell}$ is a partial solution of  $(X,\pi_1,\pi_2).$ 
 
Since $\pi_1(p_\ell)\in U_1\subset\pi_2(U_0)$, there exists $p_{\ell-1}\in U_0$ such that $\pi_1(p_\ell)=\pi_2(p_{\ell-1}).$ 
Since  $p_{\ell-1}\in U_0$, by the inductive hypothesis, there exists $p_{i}\in Y_i$ for $i=1,\ldots,\ell-1$ such that $p_1,\ldots,p_{\ell-1}$ is a partial solution of  $(X,\pi_1,\pi_2)$ of length $\ell-1$.
So $p_1,\ldots,p_{\ell}$ is a partial solution of  $(X,\pi_1,\pi_2)$ of length $\ell$.
  \end{proof}

 For two trains $Y=(Y_1,\ldots,Y_\ell)$ and $Y'=(Y'_1,\ldots,Y'_\ell)$, denote $Y\subseteq Y'$ if $Y_i\subseteq Y'_i$ for each $i$.
 Given an increasing chain of trains $Y_{i}=(Y_{i,1},\ldots,Y_{i,\ell})$,
 \[\big(\overline{\cup_iY_{i,1}}^{\kol},\ldots,\overline{\cup_iY_{n,i}}^{\kol}\big)\] is a train in $X$ which is an upper bound for this chain. 
 (For each $j$, $\overline{\cup_iY_{i,j}}^{\kol}$ is an irreducible $\delta$-variety in $\sigma^{j-1}$(X).)
 So by Zorn's lemma, maximal trains of length $\ell$ always exist in $X$.
 
Fix an $\ell\in\mathbb N$.
Consider the product 
\[
\textbf{X}_\ell:=X\times\sigma(X)\times\cdots\times \sigma^{\ell-1}(X),
\] 
and denote the projection of $\textbf{X}_\ell$ onto $\sigma^{i-1}(X)$ by $\varphi_{\ell,i}$. Note that 
\[
\mathbf{X}_\ell =\hyperlink{diffspec}{\diffspec} \big(R_X\otimes_{K} R_{\sigma(X)}\otimes_{K}\ldots\otimes_{K} R_{\sigma^{\ell-1}(X)}\big).\]
 Let 
 \begin{equation}
     \label{eq:W}
\mathbf{W}_\ell(X,\pi_1,\pi_2):=\{p\in \textbf{X}_\ell: \pi_2(\varphi_{\ell,i}(p))=\pi_1(\varphi_{\ell,i+1}(p)), i=1,\ldots,\ell-1\}.
 \end{equation}
 Note that
 \[
 \mathbf{W}_\ell = \diffspec\big(R_X\otimes_{R_{\overline{\pi_2(X)}}} R_{\sigma(X)}\otimes_{R_{\overline{\pi_2(\sigma(X))}}}\ldots\otimes_{R_{\overline{\pi_2\left(\sigma^{\ell-2}(X)\right)}}} R_{\sigma^{\ell-1}(X)}\big),\]
 under the injective $(K,\delta)$-algebra homomorphisms, for all $i$, $1\leqslant i \leqslant \ell-1$, \[R_{\overline{\pi_2\left(\sigma^{i-1}(X)\right)}}\to R_{\sigma^{i-1}(X)}\quad\text{and}\quad R_{\overline{\pi_2\left(\sigma^{i-1}(X)\right)}}\to R_{\sigma^{i}(X)}\]
 induced by $\pi_2$ and $\pi_1$, respectively.

 \begin{lemma} \label{lm-traincorrespondence}
 For every irreducible $\delta$-subvariety $W\subset \mathbf{W}_\ell$, \[\big(\overline{\varphi_{\ell,1}(W)}^{\hyperlink{Kol}{\kol}},\ldots,\overline{\varphi_{\ell,\ell}(W)}^{\kol}\big)\] is a train in $X$ of length $\ell$.
 Conversely, for each train $(Y_1,\ldots,Y_\ell)$ in $X$, 
 there exists an irreducible $\delta$-subvariety $W\subseteq \mathbf{W}_\ell$ such that $Y_i=\overline{\varphi_{\ell,i}(W)}^{\kol}$ for each $i=1,\ldots,\ell$.
 \end{lemma}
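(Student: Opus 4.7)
For the forward direction, I would rely on routine Kolchin-topology considerations. Given an irreducible $W \subseteq \mathbf{W}_\ell$, each $Y_i := \overline{\varphi_{\ell,i}(W)}^{\kol}$ is irreducible as the Kolchin closure of the continuous image of an irreducible. The containment $Y_i \subseteq \sigma^{i-1}(X)$ is immediate because $\varphi_{\ell,i}(\mathbf{X}_\ell) \subseteq \sigma^{i-1}(X)$ and the latter is Kolchin-closed. For the train compatibility, the defining equations~\eqref{eq:W} of $\mathbf{W}_\ell$ force $\pi_2\circ \varphi_{\ell,i} = \pi_1\circ \varphi_{\ell,i+1}$ on $W$, so $\pi_2(\varphi_{\ell,i}(W)) = \pi_1(\varphi_{\ell,i+1}(W))$; applying the standard identity $\overline{f(\overline{A}^{\kol})}^{\kol} = \overline{f(A)}^{\kol}$ (valid for any morphism $f$) to both $\pi_1$ and $\pi_2$ then yields $\overline{\pi_2(Y_i)}^{\kol} = \overline{\pi_1(Y_{i+1})}^{\kol}$, which is precisely the train condition.

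For the converse, the plan is to realize $W$ via a tensor-product construction. Given a train $(Y_1,\ldots,Y_\ell)$, set $Z_i := \overline{\pi_2(Y_i)}^{\kol} = \overline{\pi_1(Y_{i+1})}^{\kol}$, so that $R_{Z_i}$ embeds as a $\delta$-subring of both $R_{Y_i}$ (via $\pi_2$) and $R_{Y_{i+1}}$ (via $\pi_1$). Form the $\delta$-$K$-algebra
\[
T := R_{Y_1} \otimes_{R_{Z_1}} R_{Y_2} \otimes_{R_{Z_2}} \cdots \otimes_{R_{Z_{\ell-1}}} R_{Y_\ell}.
\]
The surjections $R_{\sigma^{i-1}(X)}\twoheadrightarrow R_{Y_i}$ and $R_{\overline{\pi_2(\sigma^{i-1}(X))}^{\kol}}\twoheadrightarrow R_{Z_i}$ assemble into a surjection from the coordinate ring of $\mathbf{W}_\ell$ displayed just before the lemma onto $T$, so every closed $\delta$-subvariety of $\diffspec T$ sits inside $\mathbf{W}_\ell$. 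It therefore suffices to produce a prime $\delta$-ideal $\mathfrak{q} \subset T$ with $\mathfrak{q} \cap R_{Y_i} = 0$ for each $i$, for then $W := \diffspec(T/\mathfrak{q})$ is an irreducible $\delta$-subvariety of $\mathbf{W}_\ell$ whose $i$-th projection has Kolchin closure exactly $Y_i$.

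To exhibit such a $\mathfrak{q}$, I invert the nonzero elements of every $R_{Y_i}$ in $T$. Because each $Y_i$ and each $Z_i$ is irreducible, $R_{Y_i}$ and $R_{Z_i}$ are $\delta$-domains with $\delta$-fields of fractions $F_i$ and $E_i$, and a direct computation shows that the resulting localization equals
\[
T_\star = F_1 \otimes_{E_1} F_2 \otimes_{E_2} \cdots \otimes_{E_{\ell-1}} F_\ell.
\]
Being an iterated tensor product of nonzero vector spaces over fields, $T_\star$ is a nonzero $\delta$-$K$-algebra of characteristic zero. Hence its $\delta$-nilradical is proper, so by Kolchin's theorem a maximal $\delta$-ideal $\mathfrak{p}$ of $T_\star$ exists and is prime. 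Taking $\mathfrak{q}$ to be the contraction of $\mathfrak{p}$ to $T$ yields the desired prime $\delta$-ideal, since the composition $R_{Y_i} \hookrightarrow F_i \hookrightarrow T_\star \twoheadrightarrow T_\star/\mathfrak{p}$ is injective for every $i$.

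The main obstacle I anticipate is verifying that the successive localization of $T$ at the nonzero elements of each $R_{Y_i}$ really does collapse to the clean tensor-of-fields $T_\star$ above; this requires careful tracking of how the balanced conditions over the various $R_{Z_i}$ behave under localization, and in particular the observation that each $\otimes_{R_{Z_i}}$ may be replaced by $\otimes_{E_i}$ once elements of $R_{Z_i}$ have been inverted on either side. The remaining ingredients---existence of a prime $\delta$-ideal in a nonzero $\delta$-ring of characteristic zero, and the contraction argument---are standard.
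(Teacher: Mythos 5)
Your proof is correct, and it takes a genuinely different route from the paper's. The paper establishes the converse by induction on $\ell$: the train $(Y_1,\ldots,Y_{\ell-1})$ yields an irreducible $Y'\subset\mathbf{W}_{\ell-1}$ by the inductive hypothesis, one then forms the two-factor product $R_{Y'}\otimes_{R_Z}R_{Y_\ell}$ with $Z=\overline{\pi_1(Y_\ell)}^{\kol}$, passes to the tensor product $E\otimes_L F$ of fraction fields, chooses a prime $\delta$-ideal $\mathfrak{p}$ there, contracts, and checks that all the projections to the $Y_i$ remain dominant by composing injections. You instead assemble the full $\ell$-fold fiber product $T$ in a single step and pass to $T_\star$, the iterated tensor product of all the fraction fields at once. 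The engine is the same in both arguments: the relevant tensor product over fields is nonzero, a nonzero $\delta$-ring of characteristic zero admits a prime $\delta$-ideal, and the contraction of that ideal meets each $R_{Y_i}$ trivially because $R_{Y_i}\hookrightarrow F_i$ and $F_i\cap\mathfrak{p}=0$ since $F_i$ is a field. Your non-inductive formulation is more transparent and exhibits the whole $\ell$-fold structure at once; the paper's induction has the minor advantage that each step involves only a two-factor tensor product of \emph{fields} over a field, making the nonvanishing and injectivity checks immediate. One remark: the localization identification that you flag as the ``main obstacle'' is not actually needed for your argument to close. It suffices to have the natural $\delta$-$K$-algebra map $T\to T_\star$ (which exists by the universal property of the $\otimes_{R_{Z_i}}$, since the balance relations are satisfied in $T_\star$), together with $T_\star\neq 0$ and the injectivity of each $F_i\hookrightarrow T_\star$; whether $T_\star$ is literally a localization of $T$ never enters. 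If you do want the localization claim, it holds by inverting the sets $R_{Y_i}\setminus\{0\}$ one factor at a time, exactly as you indicate.
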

\begin{proof}The first assertion is straightforward. 
  We will prove the second assertion  by induction on $\ell$.
  For $\ell = 1$, $\mathbf{W}_1= X$, and we can set $W = Y_1$.
  
  Let $\ell > 1$. Apply the inductive hypothesis to the train $(Y_1, \ldots, Y_{\ell - 1})$  
  and obtain an irreducible subvariety $Y' \subset \mathbf{W}_{\ell-1} \subset \mathbf{X}_{\ell - 1}$.
  Then there is a natural embedding of $Y' \times Y_\ell$ into $\mathbf{X}_\ell$.
  Denote $(Y' \times Y_\ell) \cap \mathbf{W}_\ell$ by $\widetilde Y$.
  Since $Y'\subset \mathbf{W}_{\ell-1}$,
  \begin{equation*}\label{eq:reprW}
 \widetilde Y = \{ p \in Y' \times Y_\ell \:|\: \pi_2\left( \varphi_{\ell, \ell - 1}(p) \right) = \pi_1\left( \varphi_{\ell, \ell}(p) \right) \}.
 \end{equation*}
Let 
  \begin{equation}\label{eq:reprZ}
  Z := \overline{\pi_2\left( \varphi_{\ell - 1, \ell - 1}(Y') \right)} = \overline{\pi_1(Y_\ell)}.
  \end{equation}
  Then we have a $(k,\delta)$-isomorphism \[R_{Y'}\otimes_{R_Z}R_{Y_\ell} \to R_{\widetilde{Y}}\]
  under the $(k,\delta)$-algebra homomorphisms   $i_1:R_Z \to R_{Y'}$ and $i_2:R_Z \to R_{Y_\ell}$ induced by  $\pi_2\circ \varphi_{\ell - 1, \ell - 1} $ and $\pi_1$, respectively.
  Equality~\eqref{eq:reprZ} implies that $i_1$ and $i_2$ are injective.
 Denote the fields of fractions of $R_{Y'}$, $R_{Y_\ell}$, and $R_Z$ by $E$, $F$, and $L$, respectively.
  Let $\mathfrak{p}$ be any prime differential ideal in $E \otimes_L F$, \[R := (E \otimes_L F) / \mathfrak{p},\]and $\pi\colon E \otimes_L F \to R$ be the canonical homomorphism.
  Consider the natural homomorphism $i \colon R_{Y'}\otimes_{R_Z}R_{Y_\ell} \to E \otimes_L F$.
  Since $1 \in i(R_{Y'}\otimes_{R_Z}R_{Y_\ell})$, the composition $\pi \circ i$ is a nonzero homomorphism.
Since $i_1$ and $i_2$ are injective, the natural homomorphisms $i_{Y'} \colon R_{Y'} \to R_{Y'}\otimes_{R_Z}R_{Y_\ell}$ and $i_{Y_\ell} \colon R_{Y_\ell} \to R_{Y'}\otimes_{R_Z}R_{Y_\ell}$ are injective as well.
  We will show that the compositions \[\pi\circ i \circ i_{Y'} \colon R_{Y'} \to R\quad \text{and}\quad 
  \pi\circ i \circ i_{Y_\ell} \colon R_{Y_\ell} \to R\] are injective.
  Introducing the natural embeddings $i_E \colon E \to E\otimes_L F$ and $j_{Y'} \colon R_{Y'} \to E$, we can rewrite
  \[
  \pi\circ i \circ i_{Y'} = \pi \circ i_E \circ j_{Y'}.
  \]
  The homomorphisms $i_E$ and $j_{Y'}$ are injective.
  The restriction of $\pi$ to $i_E(E)$ is also injective since $E$ is a field.
  Hence, the whole composition $\pi \circ i_E \circ j_{Y'}$ is injective. 
  The argument for $\pi\circ i \circ i_{Y_\ell}$ is analogous. Let 
  \[S :=\big(R_{Y'}\otimes_{R_Z}R_{Y_\ell}\big)\big/ \bigl(\mathfrak{p}\cap \big(R_{Y'}\otimes_{R_Z}R_{Y_\ell}\big)\bigr),\]
  which is a domain, and the  homomorphisms $\pi\circ i \circ i_{Y'}: R_{Y'}\to S$ and $\pi\circ i \circ i_{Y_\ell} : R_{Y_\ell}\to S$ are injective. We let \[W:=\hyperlink{diffspec}{\diffspec}{S}.\]
  For every $i$, $1 \leqslant i < \ell$, the homomorphism \[\varphi_{\ell, i}^\sharp = (\pi\circ i \circ i_{Y'})\circ\varphi_{\ell - 1, i}^\sharp : R_{Y_i}\to R_{Y'}\to S \] is injective as a composition of two injective homomorphisms. Hence,
  the restriction $\varphi_{\ell, i} \colon W \to Y_i$ is dominant.
 \end{proof}

 \subsection{
Technical bounds}\label{subsec:bounds}
 \subsubsection{Number of prime components in differential varieties}\label{sec:numberofcomponents}
 In this section, we fix a $\delta$-field $k$ 
and $\bm{x}= x_1, \ldots, x_n$. For a commutative ring $R$ and subsets $I$ and $S$ of $R$, we let $I:S = \{r\in R\mid \exists\, s \in S:\, rs\in I\}$. 

\begin{lemma}\label{lem:separants}
  There exists a computable function $G(n, r, D)$ such that,
  for every $r \in \mathbb{Z}_{\geqslant 0}$ and a prime ideal $I \subset 
 k[\bm{x}_{r,0}]
  $ such that $I = {\sqrt{\langle I\rangle^{(\hyperlink{diffideal}{\infty})}} \cap k[\hyperlink{Rrs}{\bm{x}_{r,0}}]}$ and $\deg I \leqslant D$, there exists 
  $f \in k[\bm{x}_{r,0}]\setminus I$ such that
  \begin{itemize}
    \item $\langle I\rangle^{(\infty)} : f^{\infty}$ is a prime differential ideal;
    \item $\deg f \leqslant G(n, r, D)$.
  \end{itemize}
\end{lemma}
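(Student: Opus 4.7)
The plan is to take $f$ to be a maximal nontrivial minor of a ``top-order Jacobian'' associated to $I$, and then to prove that localizing $\langle I\rangle^{(\infty)}$ at $f$ presents the resulting differential ring as a polynomial extension of the localized coordinate ring $(k[\bm{x}_{r,0}]/I)_f$, which is a domain.

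Concretely, I would choose generators $g_1,\ldots,g_m \in I$ of degree at most $D$ and form the matrix $J := \bigl(\partial g_i/\partial(\delta^r x_j)\bigr)_{i,j}$, whose entries lie in $k[\bm{x}_{r,0}]$ and have degree at most $D-1$. Let $r_J$ be the largest integer such that some $r_J \times r_J$ minor of $J$ is not in $I$, and let $f$ be such a minor. Then $f \in k[\bm{x}_{r,0}] \setminus I$ with $\deg f \leqslant r_J(D-1) \leqslant n(D-1)$, so I would take $G(n,r,D) := n(D-1)$.

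A chain-rule induction shows that for every $s \geqslant 1$ the prolongation $\delta^s g_i$ is linear in $\delta^{r+s} x_\beta$ with coefficient matrix $J$. After localizing at $f$, Cramer's rule therefore expresses $r_J$ of the variables $\delta^{r+s} x_\beta$ in terms of the remaining $n-r_J$ ``free'' variables $\delta^{r+s} x_\beta$ (with $\beta \in S$ for an appropriate $S \subseteq \{1,\ldots,n\}$ of size $n-r_J$) and lower-order variables. This produces a natural surjective ring homomorphism
\[
\pi\colon k[\bm{x}_{\infty,0}]_f \twoheadrightarrow A := \bigl(k[\bm{x}_{r,0}]/I\bigr)_f\bigl[z_\beta^{(s)} : s \geqslant 1,\, \beta \in S\bigr],
\]
where $\pi$ sends $\delta^{r+s} x_\beta$ ($\beta \in S$) to the polynomial generator $z_\beta^{(s)}$ and $\delta^{r+s} x_\beta$ ($\beta \notin S$) to its Cramer expression. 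The target $A$ is a polynomial ring over the domain $(k[\bm{x}_{r,0}]/I)_f$, hence a domain; if $\pi$ descends to an isomorphism modulo $\langle I\rangle^{(\infty)}_f$, then $\langle I\rangle^{(\infty)} : f^\infty$ is prime as the preimage of the zero ideal of $A$.

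The main obstacle is to show that the $m - r_J$ ``excess'' relations --- obtained at each level by eliminating top-order variables from the prolongations $\delta^s g_j = 0$ via Cramer --- all vanish in $A$. After substitution, each excess relation becomes an element $\widetilde E_j^{(s)} \in k[\bm{x}_{r,0}]_f[z_\beta^{(s')} : s' < s,\, \beta \in S]$ lying in $\langle I\rangle^{(\infty)}_f$, and I must argue that its coefficients, viewed as a polynomial in the $z$'s, lie in $I_f$. This amounts to the inclusion
\[
\langle I\rangle^{(\infty)}_f \cap k[\bm{x}_{r,0}]_f\bigl[z_\beta^{(s)} : s \geqslant 1,\, \beta \in S\bigr] \subseteq I_f \cdot k[\bm{x}_{r,0}]_f\bigl[z_\beta^{(s)} : s \geqslant 1,\, \beta \in S\bigr],
\]
a strengthening of the given hypothesis $\sqrt{\langle I\rangle^{(\infty)}} \cap k[\bm{x}_{r,0}] = I$. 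I would prove it by evaluating at a generic differential solution of $I$ constructed by taking the generic point of $V(I)$ in a differentially closed field and extending via differentially transcendental values $\xi_\beta := \delta^{r+1} x_\beta$ for $\beta \in S$; algebraic independence of $\xi_\beta, \delta \xi_\beta, \delta^2 \xi_\beta,\ldots$ over $k(V(I))$ then forces all offending coefficients to vanish on $V(I)$. The fact that this assignment actually annihilates every $\delta^s g_i$ (not only the $r_J$ used in Cramer) is where the hypothesis enters essentially: it guarantees that the purely algebraic relations in $k[\bm{x}_{r,0}]$ produced by Cramer elimination already lie in $I$. Running the same argument in reverse yields injectivity of the induced map $k[\bm{x}_{\infty,0}]_f/\langle I\rangle^{(\infty)}_f \to A$, completing the proof of primality.
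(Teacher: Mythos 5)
Your approach is a genuine alternative to the paper's. The paper runs the Rosenfeld--Gr\"obner algorithm with an orderly ranking on $I$ to get a regular decomposition $\sqrt{\langle I\rangle^{(\infty)}}=\bigcap_i\bigl(\langle C_i\rangle^{(\infty)}:H_i^\infty\bigr)$, selects the component with $\bigl(\langle C_1\rangle^{(\infty)}:H_1^\infty\bigr)\cap k[\bm{x}_{r,0}]=I$ (which exists because $I$ is prime), proves it is prime via partial remainders and Rosenfeld's lemma, and takes $f:=H_1$, the product of initials and separants of $C_1$, with a degree bound from Schost's theorem. Your Jacobian-minor construction is closer in spirit to a Jacobian criterion and would, if carried through, yield a smaller $G$. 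As written, however, it has two substantive gaps.

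First, the claimed bound $G(n,r,D)=n(D-1)$ depends on being able to pick generators $g_1,\ldots,g_m$ of $I$ of degree at most $D$. That is not available: $D$ is the geometric degree of $I$, and by the McCullough--Peeva counterexamples to the Eisenbud--Goto conjecture, the regularity of a prime ideal (hence the degree needed to generate it) can vastly exceed its degree. A computable substitute for the generator-degree bound exists and would still make the method viable, but $n(D-1)$ is unjustified. Second, the crucial inclusion $\langle I\rangle^{(\infty)}_f\cap k[\bm{x}_{r,0}]_f\bigl[z_\beta^{(s)}\bigr]\subseteq I_f\cdot k[\bm{x}_{r,0}]_f\bigl[z_\beta^{(s)}\bigr]$ is argued by evaluating at a ``generic differential solution'' obtained by declaring $\delta^{r+s}x_\beta$ ($\beta\in S$) differentially transcendental and filling in the rest via Cramer. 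Verifying that this assignment extends to an actual $\delta$-homomorphism annihilating every $\delta^sg_i$ is exactly what you are trying to prove, so as stated the argument is circular. The gap can be closed by working directly with the ideal $Q\subset k[\bm{x}_{\infty,0}]_f$ generated by $I_f$ and the Cramer relations: the $z_\beta^{(1)}$-coefficients of each level-one excess relation lie in $I_f$ by the Schur-complement/rank argument, the constant terms lie in $I_f$ because they fall in $\langle I\rangle^{(\infty)}_f\cap k[\bm{x}_{r,0}]_f\subseteq I_f$ by the hypothesis, a Leibniz-rule computation shows that $\delta$ of a Cramer relation at level $s$ equals the Cramer relation at level $s+1$ modulo $Q$, and together these make $Q$ a differential ideal, forcing $Q=\langle I\rangle^{(\infty)}_f$ with $k[\bm{x}_{\infty,0}]_f/Q\cong A$ a domain. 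Your sketch points at the coefficient computation but does not supply it, and the $\delta$-consistency of the Cramer substitutions---which is essential for the recursive elimination to be coherent across all orders $s$---is not addressed at all.
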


\begin{proof}
  Compute a regular decomposition of $\sqrt{\langle I\rangle^{(\infty)}} $ using the Rosenfeld-Gr\"obner algorithm with an orderly ranking:
  \[
 \sqrt{\langle I\rangle^{(\infty)}}  = \big(\langle C_1\rangle^{(\infty)} : H_1^\infty\big) \cap \ldots \cap \big(\langle C_N\rangle^{(\infty)} : H_N^\infty\big) .
  \]
  Since the Rosenfeld-Gr\"obner algorithm with an orderly ranking does not increase the orders of the polynomials, $C_1, \ldots, C_N \subset k[\bm{x}_{r,0}]$.
  Since $I$ is prime, there exists $i$, say $i = 1$, such that 
  \begin{equation}\label{eq:proj}
 k[\bm{x}_{r,0}]\cap \big(\langle C_1\rangle^{(\infty)}:H_1^\infty\big)  = I.
  \end{equation}
    We show that $J :=\langle C_1\rangle^{(\infty)} : H_1^\infty $ is a prime differential ideal.
  Suppose $P_1, P_2\in k[\bm{x}_{\infty,0}]$ with $P_1P_2 \in J$.
  Let $\overline{P_i}\,(i=1,2)$ be the partial remainder of $P_i$ with respect to $C_1$~\citep[p.~396]{Rozenfeld}. 
  Then $\overline{P_1}\cdot\overline{P_2}\in J$.
  Due to Rosenfeld's lemma~\citep[p.~397]{Rozenfeld}, 
  \[
  \overline{P_1}\cdot\overline{P_2} \in k[\bm{x}_{\infty,0}]  \cdot ((C_1) : H_1^\infty) \subseteq k[\bm{x}_{\infty,0}] \cdot (I : H_1^{\infty}) = k[\bm{x}_{\infty,0}]\cdot I.
  \]
  Since $k[\bm{x}_{\infty,0}]\cdot I$ is prime, at least one of $\overline{P_i}$ belongs to $k[\bm{x}_{\infty,0}]\cdot I \subset J$.
  So $P_1\in J$ or $P_2\in J$.
  Thus, $J$ is prime.
    Equality~\eqref{eq:proj} together with $C_1 \subset k[\bm{x}_{r,0}]$ imply that
  \[
    \langle C_1\rangle^{(\infty)} : H_1^\infty  = \langle I\rangle^{(\infty)} : H_1^\infty,
  \]
  so $\langle I\rangle^{(\infty)} : H_1^\infty$ is a prime differential ideal.
  Since the differential polynomials from $C_1$ together with some of their derivatives constitute a triangular set for  $I$, \cite[Theorem~1]{Schost2003} implies that the degree of every initial and every separant of $C_1$ is bounded by
  \[
    (2(n(r+1))^2 + 2)^{n(r+1)} D^{2n(r+1) + 1} + D.
  \]
  Since there are at most $nr$ elements in $C_1$, setting \[G(n, r, D) := 2n(r+1) (2(n(r+1))^2 + 2)^{n(r+1)} D^{2n(r+1) + 1} + 2n(r+1)D\quad \text{and}\quad f := H_1\] finishes the proof of the lemma.
\end{proof}

%%%%%%%%%%%%%%%%%%

\begin{lemma}\label{lem:intersection}
  For every differential ring $R$, subring $S \subset R$, and ideals $I, P_1,\ldots,P_\ell \subset S$ such that $I = P_1 \cap \ldots \cap P_\ell$, we have
  \[
   \sqrt{\langle I\rangle^{\hyperlink{diffideal}{(\infty)}}}  = \sqrt{\langle P_1\rangle^{(\infty)}}  \cap \ldots \cap \sqrt{\langle P_\ell\rangle^{(\infty)}} 
  \]
\end{lemma}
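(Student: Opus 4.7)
The proof will establish the two inclusions separately. For the easy direction $\sqrt{\langle I\rangle^{(\infty)}} \subseteq \bigcap_{j=1}^\ell \sqrt{\langle P_j\rangle^{(\infty)}}$, I will simply observe that since $I \subseteq P_j$ for every $j$, we have $\langle I\rangle^{(\infty)} \subseteq \langle P_j\rangle^{(\infty)}$ as differential ideals in $R$; passing to radicals and intersecting over $j$ concludes this inclusion.

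For the reverse inclusion, the plan is to invoke the classical Ritt--Kaplansky identity for radical differential ideals: in any differential ring $R$ and for any two subsets $A, B \subseteq R$,
\[
\sqrt{\langle A\rangle^{(\infty)}} \cap \sqrt{\langle B\rangle^{(\infty)}} \;=\; \sqrt{\langle A \cdot B\rangle^{(\infty)}},
\]
where $A \cdot B := \{ab \mid a \in A,\, b \in B\}$. A straightforward induction on $\ell$ then extends this to
\[
\bigcap_{j=1}^\ell \sqrt{\langle P_j\rangle^{(\infty)}} \;=\; \sqrt{\langle P_1 P_2 \cdots P_\ell\rangle^{(\infty)}}.
\]
Because the $P_j$ are ideals of $S$, the elementary set-theoretic containment $P_1 P_2 \cdots P_\ell \subseteq P_1 \cap \cdots \cap P_\ell = I$ is immediate, so $\langle P_1 \cdots P_\ell\rangle^{(\infty)} \subseteq \langle I\rangle^{(\infty)}$ in $R$, and taking radicals yields $\bigcap_{j=1}^\ell \sqrt{\langle P_j\rangle^{(\infty)}} \subseteq \sqrt{\langle I\rangle^{(\infty)}}$, completing the proof.

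The only genuinely nontrivial ingredient is the Ritt--Kaplansky identity itself, and I expect it to be the main obstacle in making the proof self-contained. Its kernel is the lemma that if $ab \in \mathfrak{a}$ for a radical differential ideal $\mathfrak{a}$, then $\delta(a)\, b \in \mathfrak{a}$: differentiating $ab$ gives $\delta(a)b + a\delta(b) \in \mathfrak{a}$, multiplying by $\delta(a)b$ and using $ab \in \mathfrak{a}$ yields $(\delta(a)b)^2 \in \mathfrak{a}$, and taking the radical finishes. Iterating this lemma reduces an arbitrary product $\delta^i(s)\,\delta^j(t)$ with $s \in A,\, t \in B$ into $\sqrt{\langle A\cdot B\rangle^{(\infty)}}$, which is what is needed. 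Notably, no hypothesis on the characteristic is required at any point. The rest is bookkeeping.
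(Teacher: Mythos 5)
Your proof is correct in the characteristic-zero setting in which the paper operates, but it takes a genuinely different route. The paper's own argument invokes a quantitative inclusion from \cite{OPV2018} (Lemma~8 there), namely $P_1^{(s)} \cap \cdots \cap P_\ell^{(s)} \subseteq \sqrt{(P_1 \cap \cdots \cap P_\ell)^{(\ell s)}}$, sandwiches this against the trivial reverse containment at level $\ell s$, and lets $s \to \infty$. You instead appeal to the classical Ritt--Kaplansky identity $\sqrt{\langle A\rangle^{(\infty)}} \cap \sqrt{\langle B\rangle^{(\infty)}} = \sqrt{\langle A\cdot B\rangle^{(\infty)}}$ together with the observation $P_1 \cdots P_\ell \subseteq \bigcap_j P_j$ (this is the one place where the hypothesis that the $P_j$ are ideals of $S$ is actually used). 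Your route is more self-contained and stays close to the textbook development of radical differential ideals; the paper's approach recycles an effective bound already in its toolkit, which would, if one cared, yield a quantitative refinement of the lemma for free. One error in your commentary: the claim that ``no hypothesis on the characteristic is required at any point'' should be retracted. The kernel step ($ab \in \mathfrak{a}$ with $\mathfrak{a}$ a radical differential ideal implies $\delta(a)b \in \mathfrak{a}$) is indeed characteristic-free, but the iteration that places $\delta^i(a)\,\delta^j(b)$ into $\sqrt{\langle A\cdot B\rangle^{(\infty)}}$ applies that kernel step repeatedly \emph{to} $\mathfrak{a} := \sqrt{\langle A\cdot B\rangle^{(\infty)}}$, which requires $\mathfrak{a}$ to be a differential ideal. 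That is automatic in characteristic zero (Ritt algebras, and more generally Keigher rings), but over a general differential ring of positive characteristic the radical of a differential ideal need not be $\delta$-stable, so the identity as you state it, with ordinary radicals rather than with the smallest radical differential ideal, can fail. This does not affect the correctness of your proof in the context of the paper, which works in characteristic zero throughout, but the claim of characteristic-independence is unjustified.
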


\begin{proof}
  \cite[Lemma~8]{OPV2018} implies that, for every $s > 0$,
  \[
    P_1^{(s)} \cap \ldots \cap P_\ell^{(s)} \subset \sqrt{(P_1 \cap \ldots \cap P_\ell)^{(\ell s)}}.
  \]
  Since taking the radical commutes with intersections, we have
  \[
    \sqrt{P_1^{(s)}} \cap \ldots \cap \sqrt{P_\ell^{(s)}} \subset \sqrt{(P_1 \cap \ldots \cap P_\ell)^{(\ell s)}}.
  \]
  We also have
  \[
    \sqrt{(P_1 \cap \ldots \cap P_\ell)^{(\ell s)}} \subset \sqrt{P_1^{(\ell s)} \cap \ldots\cap P_\ell^{(\ell s)}} = \sqrt{P_1^{(\ell s)}} \cap \ldots\cap \sqrt{P_\ell^{(\ell s)}}.
  \]
  Taking $s = \infty$, we obtain
  \[
  \sqrt{\langle P_1\rangle^{(\infty)}}  \cap \ldots \cap \sqrt{\langle P_\ell\rangle^{(\infty)}}  \subset \sqrt{\langle I\rangle^{(\infty)}} \subset  \sqrt{\langle P_1\rangle^{(\infty)}}  \cap \ldots \cap \sqrt{\langle P_\ell\rangle^{(\infty)}} .\qedhere
  \]
\end{proof}

%%%%%%%%%%%

\begin{lemma}\label{lem:projection}
  There exists a  computable function $F(n, r, m, D)$ such that, for every $r, m, D$ and  radical ideal $J \subset k[\hyperlink{Rrs}{\bm{x}_{r,0}}]$ of dimension $m$ and degree $D$,
  \[
    \deg \left(k[\bm{x}_{r,0}] \cap \sqrt{\langle J\rangle^{(\infty)}}\right) \leqslant F(n, r, m, D).
  \]
\end{lemma}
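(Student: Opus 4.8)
The statement asks for a uniform bound on the degree of the order-$\leqslant r$ part $\sqrt{\langle J\rangle^{(\infty)}}\cap k[\bm{x}_{r,0}]$ of the radical differential ideal generated by $J$. The plan is to realize this ideal by a bounded number of prolongations and then apply a B\'ezout estimate. First I would reduce to the case of a prime $J$: writing $J=P_1\cap\dots\cap P_\ell$ for the minimal primes of $J$, one has $\ell\leqslant D$, $\dim P_i\leqslant m$ and $\sum_i\deg P_i\leqslant D$, and by Lemma~\ref{lem:intersection}, $\sqrt{\langle J\rangle^{(\infty)}}=\bigcap_i\sqrt{\langle P_i\rangle^{(\infty)}}$; intersecting with $k[\bm{x}_{r,0}]$ and using that the degree of an intersection of ideals is at most the sum of the degrees of the factors, it is enough to bound $\deg\bigl(\sqrt{\langle P\rangle^{(\infty)}}\cap k[\bm{x}_{r,0}]\bigr)$ for a single prime $P$ of degree $\leqslant D$ and dimension $\leqslant m$, the final $F$ being $D$ times this bound.

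For the prime case, note $\langle P\rangle^{(M)}\subseteq k[\bm{x}_{r+M,0}]$ for every $M$ and that $\sqrt{\langle P\rangle^{(\infty)}}=\bigcup_M\sqrt{\langle P\rangle^{(M)}}$, since taking radicals commutes with directed unions of ideals; hence $\sqrt{\langle P\rangle^{(\infty)}}\cap k[\bm{x}_{r,0}]$ is an increasing union of ideals of the Noetherian ring $k[\bm{x}_{r,0}]$ and so stabilizes. The crucial input is that it stabilizes at a \emph{computable} stage $M_0=M_0(n,r,D)$ that depends on $P$ only through $n,r,D$: this is precisely an effective differential Nullstellensatz (equivalently, effective Ritt--Raudenbush) statement. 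Granting it, $\sqrt{\langle P\rangle^{(\infty)}}\cap k[\bm{x}_{r,0}]=\sqrt{\langle P\rangle^{(M_0)}}\cap k[\bm{x}_{r,0}]$; since differentiation does not raise total degree, $\langle P\rangle^{(M_0)}$ is generated, in the polynomial ring $k[\bm{x}_{r+M_0,0}]$ in $n(r+M_0+1)$ variables, by polynomials of degree $\leqslant D$, so a B\'ezout-type bound gives $\deg\sqrt{\langle P\rangle^{(M_0)}}\leqslant D^{\,n(r+M_0+1)}$, and the degree of the Zariski closure of a coordinate projection never exceeds the degree of its source, whence $\deg\bigl(\sqrt{\langle P\rangle^{(\infty)}}\cap k[\bm{x}_{r,0}]\bigr)\leqslant D^{\,n(r+M_0+1)}$. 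One then takes $F(n,r,m,D):=D^{\,n(r+M_0(n,r,D)+1)+1}$; the $m$-dependence is vacuous at this level of generality, but a sharper, $m$-sensitive bound can be extracted by tracking $m$ through the prolongation and replacing B\'ezout by an affine degree bound.

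I expect the stabilization step to be the main obstacle: the stage $M_0$ must be bounded purely in terms of $(n,r,D)$, never letting the a priori unknown degree of the eliminated ideal re-enter the estimate, and this uniform bound is the one genuinely differential-algebraic ingredient, with no purely algebraic analogue. An alternative route, closer to the other tools of this section and yielding a dimension-sensitive bound, is a recursion on $\dim P$: when $P=\sqrt{\langle P\rangle^{(\infty)}}\cap k[\bm{x}_{r,0}]$, Lemma~\ref{lem:separants} produces $f\in k[\bm{x}_{r,0}]\setminus P$ with $\deg f\leqslant G(n,r,D)$ and $\langle P\rangle^{(\infty)}:f^\infty$ prime, which gives the \emph{exact} splitting $\sqrt{\langle P\rangle^{(\infty)}}\cap k[\bm{x}_{r,0}]=P\cap\bigl(\sqrt{\langle P,f\rangle^{(\infty)}}\cap k[\bm{x}_{r,0}]\bigr)$; one works here with the differential ideal $\langle P,f\rangle^{(\infty)}$ rather than with $\langle P\rangle^{(\infty)}+\langle f\rangle$ precisely because then every minimal prime on the right automatically contains $\sqrt{\langle P\rangle^{(\infty)}}$, which is what makes the degree bookkeeping (otherwise non-monotone under inclusion) go through, and since the radical algebraic ideal generated by $P$ and $f$ has dimension $\leqslant m-1$ and degree $\leqslant D\cdot G(n,r,D)$ by B\'ezout, the induction closes; the remaining case $P\neq\sqrt{\langle P\rangle^{(\infty)}}\cap k[\bm{x}_{r,0}]$ is reached by first passing to the strictly larger radical ideal $\sqrt{\langle P\rangle^{(1)}}\cap k[\bm{x}_{r,0}]$, which has the same radical differential closure. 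Even on this route the termination of the prolongation process rests again on an effective differential Nullstellensatz bound, so that input is unavoidable.
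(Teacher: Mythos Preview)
Your approach is essentially the paper's: invoke an effective differential Nullstellensatz to bound the number of prolongations at which $\sqrt{\langle J\rangle^{(\infty)}}\cap k[\bm{x}_{r,0}]$ stabilizes, then apply a B\'ezout estimate to the finitely prolonged system. The paper does exactly this, citing \cite[Theorem~3]{OPV2018} for the stabilization bound $B=D^{n(r+1)2^{m+1}}$ and then B\'ezout for $\deg\sqrt{J^{(B)}}\leqslant D^{n(r+1)B}$, setting $F(n,r,m,D)=D^{n(r+1)B}$.

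Two minor differences are worth noting. First, your reduction to prime $J$ via Lemma~\ref{lem:intersection} is unnecessary: the effective Nullstellensatz of \cite{OPV2018} applies directly to the radical ideal $J$, so the paper skips this step entirely. Second, you assert that $M_0$ depends only on $(n,r,D)$ and that the $m$-dependence is vacuous; this is defensible (one can always take $m\leqslant n(r+1)$), but the paper's cited bound is genuinely $m$-sensitive, which matters downstream when this lemma feeds into the induction of Proposition~\ref{prop:main}. Also, your claim that ``$\langle P\rangle^{(M_0)}$ is generated \ldots\ by polynomials of degree $\leqslant D$'' silently uses that a radical ideal of degree $D$ admits generators of degree $\leqslant D$; the paper covers this by invoking \cite[Proposition~3]{Heintz} alongside the Nullstellensatz bound. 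Your alternative inductive route via Lemma~\ref{lem:separants} is not how the paper proceeds here (that recursion is reserved for Proposition~\ref{prop:main}, which \emph{calls} the present lemma as a black box).
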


\begin{proof}
  \cite[Theorem~3]{OPV2018} together with \cite[Proposition~3]{Heintz} imply that 
  \[
   k[\bm{x}_{r,0}] \cap \sqrt{\langle J\rangle^{(\infty)}} = k[\bm{x}_{r,0}] \cap  \sqrt{J^{(B)}},
  \]
  where $B := D^{n(r+1) 2^{m + 1}}$.
  Thus,  $\deg \left(k[\bm{x}_{r,0}] \cap \sqrt{\langle J\rangle^{(\infty)}}\right)\leqslant\deg  \sqrt{J^{(B)}}$.
  The B\'ezout inequality implies that $\deg  \sqrt{J^{(B)}}\leqslant D^{n(r+1)B}$.
  Thus, we can finish the proof of the lemma by setting $F(n, r, m, D) = D^{n(r+1)B}$.
\end{proof}

%%%%%%%%%%%%%%%%%%

\begin{prop}\label{prop:main}
  There is a computable function $C(n, r, m, D)$ such that, for every nonnegative integers $r, m, D$ and every  radical  ideal $I \subset k[\bm{x}_{r,0}]$ such that 
  \begin{itemize}   
   \item $\deg I \leqslant D$,
   \item $\dim I \leqslant m$,
   \item $I = \sqrt{\langle I\rangle^{\hyperlink{diffideal}{(\infty)}}} \cap k[\bm{x}_{r,0}]$,
   \end{itemize}
   the number of prime components of $\sqrt{\langle I\rangle^{(\infty)}}$ does not exceed $C(n, r, m, D)$.
\end{prop}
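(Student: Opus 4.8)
The plan is to prove the bound by induction on the dimension parameter $m$. The inductive step will peel off one prime component of $\sqrt{\langle I\rangle^{(\infty)}}$ using Lemma~\ref{lem:separants}, thereby reducing to an ideal of strictly smaller dimension, and then re-establish the saturation hypothesis using Lemma~\ref{lem:projection}; degrees will be kept under control by the B\'ezout/Heintz inequality and dimensions by Krull's principal ideal theorem.

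First I would reduce to the case that $I$ is prime. Writing $I=\mathfrak{p}_1\cap\ldots\cap\mathfrak{p}_t$ as the intersection of its minimal primes, we have $t\leqslant\deg I\leqslant D$, and each $\mathfrak{p}_i$ satisfies $\deg\mathfrak{p}_i\leqslant D$ and $\dim\mathfrak{p}_i\leqslant m$. Moreover, each $\mathfrak{p}_i$ again satisfies $\mathfrak{p}_i=\sqrt{\langle\mathfrak{p}_i\rangle^{(\infty)}}\cap k[\bm{x}_{r,0}]$: intersecting the identity of Lemma~\ref{lem:intersection} with $k[\bm{x}_{r,0}]$ expresses $I$ as $\bigcap_i\bigl(\sqrt{\langle\mathfrak{p}_i\rangle^{(\infty)}}\cap k[\bm{x}_{r,0}]\bigr)$, and if one of these contractions strictly contained $\mathfrak{p}_i$, then the irreducible variety $\mathbb{V}(\mathfrak{p}_i)$ would be a finite union of proper closed subsets. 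Since Lemma~\ref{lem:intersection} also gives $\sqrt{\langle I\rangle^{(\infty)}}=\bigcap_i\sqrt{\langle\mathfrak{p}_i\rangle^{(\infty)}}$, the minimal primes of $\sqrt{\langle I\rangle^{(\infty)}}$ lie among those of the $\sqrt{\langle\mathfrak{p}_i\rangle^{(\infty)}}$, so it suffices to bound the number of prime components of $\sqrt{\langle\mathfrak{p}\rangle^{(\infty)}}$ for a prime $\mathfrak{p}$ with $\deg\mathfrak{p}\leqslant D$, $\dim\mathfrak{p}\leqslant m$, and $\mathfrak{p}=\sqrt{\langle\mathfrak{p}\rangle^{(\infty)}}\cap k[\bm{x}_{r,0}]$.

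For such a $\mathfrak{p}$, I would apply Lemma~\ref{lem:separants} to obtain $f\in k[\bm{x}_{r,0}]\setminus\mathfrak{p}$ with $\deg f\leqslant G(n,r,D)$ and $\langle\mathfrak{p}\rangle^{(\infty)}:f^{\infty}$ a prime differential ideal. A short localization argument identifies $\langle\mathfrak{p}\rangle^{(\infty)}:f^{\infty}$ with $\sqrt{\langle\mathfrak{p}\rangle^{(\infty)}}:f^{\infty}$, which is the intersection of those minimal primes of $\sqrt{\langle\mathfrak{p}\rangle^{(\infty)}}$ not containing $f$; its primeness forces exactly one such prime $P_0$ to exist. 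Every other minimal prime, being a differential ideal, contains $\langle f\rangle^{(\infty)}$, hence contains $\sqrt{\langle\mathfrak{p}\cup\{f\}\rangle^{(\infty)}}$, and one checks it is in fact a minimal prime of the latter; therefore
\[
\#\{\text{prime components of }\sqrt{\langle\mathfrak{p}\rangle^{(\infty)}}\}\ \leqslant\ 1+\#\{\text{prime components of }\sqrt{\langle\mathfrak{p}\cup\{f\}\rangle^{(\infty)}}\}.
\]
Now the algebraic ideal $\langle\mathfrak{p}\cup\{f\}\rangle\subset k[\bm{x}_{r,0}]$ has radical $J$ of dimension at most $m-1$ — by Krull's principal ideal theorem, since $f$ is either a non-unit nonzerodivisor modulo the prime $\mathfrak{p}$, or a unit, in which case $\langle\mathfrak{p}\cup\{f\}\rangle=(1)$ and there is nothing further to count — and of degree at most $D\cdot G(n,r,D)$ by B\'ezout. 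Setting $J':=\sqrt{\langle J\rangle^{(\infty)}}\cap k[\bm{x}_{r,0}]$, one checks $\sqrt{\langle J'\rangle^{(\infty)}}=\sqrt{\langle\mathfrak{p}\cup\{f\}\rangle^{(\infty)}}$, that $J'$ is radical with $J'=\sqrt{\langle J'\rangle^{(\infty)}}\cap k[\bm{x}_{r,0}]$ and $\dim J'\leqslant m-1$, and that $\deg J'$ is bounded in terms of $n,r,m,D$ by Lemma~\ref{lem:projection} applied to $J$. Feeding $J'$ into the inductive hypothesis and collecting the estimates yields a recursion of the shape
\[
C(n,r,m,D)\ \leqslant\ D\Bigl(1+C\bigl(n,r,m-1,F(n,r,m-1,D\cdot G(n,r,D))\bigr)\Bigr),
\]
with the convention $C(n,r,-1,\cdot)=0$ (a proper ideal of dimension $-1$ is the unit ideal), and this unwinds to an explicit computable $C(n,r,m,D)$ after at most $m+1$ steps.

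The main obstacle is the tension between the two hypotheses on $I$: Lemma~\ref{lem:separants} is available only when $I=\sqrt{\langle I\rangle^{(\infty)}}\cap k[\bm{x}_{r,0}]$, but this property is destroyed as soon as one adjoins the separant-type element $f$, so at every stage of the induction the current order-$r$ ideal must be replaced by its order-$r$ differential saturation. A priori the degree of such a saturation is uncontrolled, and it is precisely the effective differential-elimination bound of Lemma~\ref{lem:projection} (resting in turn on the Rosenfeld--Gr\"obner machinery and Schost-type bounds) that keeps the recursion finite. A secondary subtlety is that the induction must run on the Krull dimension of the order-$r$ ideal rather than on the differential dimension of $\mathbb{V}(I)$: only for the former does intersecting with the single hypersurface $\{f=0\}$ guarantee a drop by one, which is what makes the induction well-founded.
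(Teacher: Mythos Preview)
Your proposal is correct and follows essentially the same approach as the paper: peel off one prime component of $\sqrt{\langle I\rangle^{(\infty)}}$ via Lemma~\ref{lem:separants}, observe the remaining components lie in $\sqrt{\langle I,f\rangle^{(\infty)}}$ of strictly smaller algebraic dimension, then re-saturate and bound the degree via Lemma~\ref{lem:projection} to recurse. The only organizational difference is in the non-prime case: the paper inducts on the pair $(m,D)$ lexicographically and bounds by $\max_{D_1+\cdots+D_\ell=D}\sum_i C(n,r,m,D_i)$, whereas you reduce to primes upfront (verifying each minimal prime inherits the saturation hypothesis) and carry an outer factor of $D$, allowing induction on $m$ alone---your recursion $C(n,r,m,D)\leqslant D\bigl(1+C(n,r,m-1,F(n,r,m-1,D\cdot G(n,r,D)))\bigr)$ is in fact slightly cleaner, and your use of $D\cdot G(n,r,D)$ rather than $G(n,r,D)$ as the degree input to $F$ is more careful than what is written in the paper.
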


\begin{proof}
  We fix  
  $r$ 
  for the proof and will prove the proposition by constructing the function $C(r, m, D)$ by induction on a tuple $(m, D)$ with respect to the lexicographic ordering.
  
  Consider the base case $m = 0$.
  Then there are at most $D$ possible values for $(x_1, \ldots, x_n)$ and every prime component of $\sqrt{\langle I\rangle^{\hyperlink{diffideal}{(\infty)}}}$ is the maximal differential ideal corresponding to one of these values.
  Thus, the proposition is true for $C(n, r, 0, D) = D$.
  
  Consider $m > 0$. 
  \underline{If $I$ is not prime}, then Lemma~\ref{lem:intersection} implies that the number of prime components of $\sqrt{\langle I\rangle^{(\infty)}}$ does not exceed
  \begin{equation}\label{eq:bound_reducible}
    \max\limits_{\ell}\max\limits_{D_1 + \ldots + D_\ell = D} \left( C(n, r, m, D_1) + \ldots + C(n, r, m, D_\ell) \right),
  \end{equation}
  where all $C(n, r, m, D_i)$ are already defined by the inductive hypothesis.
  
  \underline{Consider the case of prime $I$.} 
  Lemma~\ref{lem:separants} implies that there exists $f \in k[\bm{x}_{r,0}]\setminus I$ such that $\deg f \leqslant G(n, r, D)$ and $\langle I\rangle^{(\infty)} : f^{\infty}$ is a prime differential ideal.
  Every prime component of $\sqrt{\langle I\rangle^{(\infty)}}$ either is equal to $\langle I\rangle^{(\infty)} : f^{\infty}$ or contains $f$.
  In the latter case, the component is a component of $\sqrt{\langle I, f\rangle^{(\infty)}}$.
  Let $J :=\sqrt{\langle I,f\rangle^{(\infty)}} \cap k[\bm{x}_{r,0}]$.
  Then $\dim J \leqslant m - 1$ and Lemma~\ref{lem:projection} implies that $\deg J \leqslant F(r, m - 1, G(n, r, D))$.
  Then the number of prime components of $\sqrt{\langle I\rangle^{(\infty)}}$ does not exceed
  \begin{equation}\label{eq:bound_irreducible}
    1 + C(n, r, m - 1, F(r, m - 1, G(n, r, D))).
  \end{equation}
  Thus, one can define $C(n, r, m, D)$ to be the maximum of~\eqref{eq:bound_reducible} and~\eqref{eq:bound_irreducible}.
\end{proof}

%%%%%%

Proposition~\ref{prop:main} and Lemma~\ref{lem:projection} imply the following corollary.

\begin{cor}\label{cor:numberofcomponents}
  For every  radical ideal $I \subset k[\hyperlink{Rrs}{\bm{x}_{r,0}}]$ 
  of dimension at most $m$ and degree at most $D$, the number of prime components of $\sqrt{\langle I\rangle^{(\infty)}}$ does not exceed
  \[
    C(n, r, m, F(n, r, m, D)).
  \]
\end{cor}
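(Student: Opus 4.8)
The plan is to reduce to Proposition~\ref{prop:main} by replacing $I$ with its differential saturation cut back down to the finite-order polynomial ring. Concretely, I would set $I' := \sqrt{\langle I\rangle^{(\infty)}} \cap k[\bm{x}_{r,0}]$. This is a radical ideal of $k[\bm{x}_{r,0}]$ containing $I$, so $\dim I' \leqslant \dim I \leqslant m$, and Lemma~\ref{lem:projection}, applied to the radical ideal $J = I$ of dimension at most $m$ and degree at most $D$, gives $\deg I' \leqslant F(n,r,m,D)$.

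Next I would check that $I'$ satisfies the third hypothesis of Proposition~\ref{prop:main}, namely $I' = \sqrt{\langle I'\rangle^{(\infty)}} \cap k[\bm{x}_{r,0}]$. On one hand $I \subseteq I'$ forces $\sqrt{\langle I\rangle^{(\infty)}} \subseteq \sqrt{\langle I'\rangle^{(\infty)}}$; on the other hand $I' \subseteq \sqrt{\langle I\rangle^{(\infty)}}$ forces $\langle I'\rangle^{(\infty)} \subseteq \sqrt{\langle I\rangle^{(\infty)}}$, hence $\sqrt{\langle I'\rangle^{(\infty)}} \subseteq \sqrt{\langle I\rangle^{(\infty)}}$. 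Therefore $\sqrt{\langle I'\rangle^{(\infty)}} = \sqrt{\langle I\rangle^{(\infty)}}$, and intersecting with $k[\bm{x}_{r,0}]$ recovers exactly $I'$, as wanted.

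Now I would apply Proposition~\ref{prop:main} to $I'$ with the parameters $(n, r, m, F(n,r,m,D))$: since $\deg I' \leqslant F(n,r,m,D)$, $\dim I' \leqslant m$, and $I'$ is radical and equals its own saturation, the number of prime components of $\sqrt{\langle I'\rangle^{(\infty)}}$ is at most $C(n, r, m, F(n,r,m,D))$. Since $\sqrt{\langle I'\rangle^{(\infty)}} = \sqrt{\langle I\rangle^{(\infty)}}$, this is precisely the number of prime components of $\sqrt{\langle I\rangle^{(\infty)}}$, which finishes the proof.

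There is no serious obstacle here: the corollary is essentially a bookkeeping consequence that packages Lemma~\ref{lem:projection} (to bound the degree of the saturation) together with Proposition~\ref{prop:main} (to count components under the saturation hypothesis). The only step requiring a moment's care is the idempotency identity $\sqrt{\langle I'\rangle^{(\infty)}} = \sqrt{\langle I\rangle^{(\infty)}}$, which is exactly what both lets us invoke Proposition~\ref{prop:main} for $I'$ and lets us transport its conclusion back to $I$.
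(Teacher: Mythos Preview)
Your proposal is correct and is exactly the argument the paper has in mind: the corollary is stated in the paper as an immediate consequence of Proposition~\ref{prop:main} and Lemma~\ref{lem:projection}, with no further details given, and you have spelled out precisely those details (passing to $I' = \sqrt{\langle I\rangle^{(\infty)}}\cap k[\bm{x}_{r,0}]$, bounding its degree via Lemma~\ref{lem:projection}, verifying the idempotency $\sqrt{\langle I'\rangle^{(\infty)}} = \sqrt{\langle I\rangle^{(\infty)}}$, and then invoking Proposition~\ref{prop:main}).
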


 \subsubsection{Bound for trains}\label{sec:bound_trains}
 Now we try to give a bound so that the existence of a maximal train of certain length in $X$ will definitely guarantee the existence of at least one infinite train in $X$.
 \begin{definition}[Kolchin polynomials for $\delta$-varieties and trains]
 \begin{itemize}
 \item[]
 \item  The {\em Kolchin polynomial} of an irreducible $\delta$-variety $V=\mathbb{V}(F)$, where $F \subset K[\hyperlink{Rrs}{\bm{y}_{\infty,0}}]$, where $\bm{y}=y_1,\ldots,y_n$, 
 is the unique numerical polynomial $\omega_V(t)$  such that there exists $t_0\geqslant 0$ such that, for all $t \geqslant t_0$ and
 the
 generic point  
$\bm{a}$
 of $V$  (see Definition \ref{def:varietyanddiffspec}),  $\omega_V(t)=\trdeg K(\bm{a}_{t,0})/K$.
     \item
  The Kolchin polynomial of a $\delta$-variety is defined to be the maximal Kolchin polynomial of its irreducible components.
  \item
  An irreducible component $X_1$ of a $\delta$-variety $X$ is called a {\em generic component} if $\omega_{X_1}(t)=\omega_{X}(t)$.
  \item
  We define the {\em Kolchin polynomial of a train} $Y=(Y_1,\ldots,Y_\ell)$ in $X$ as \[\omega_Y(t):=\min_i\omega_{Y_i}(t).\]
  \end{itemize}
  \end{definition}

 \begin{remark} The Kolchin polynomial of an irreducible $\delta$-variety $V$ is of the form (see~\cite[formula~(2.2.6)]{KLMP} and \cite[Theorem~II.12.6(d)]{Kol})
\[\omega_V(t)=\hyperlink{ddim}{\ddim}(V)\cdot(t+1)+\hyperlink{ord}{\ord}(V).\]
\end{remark}
The following lemma shows how the coefficients of a Kolchin polynomial change under a projection.

\begin{lemma} \label{lm-projection}
Let $V\subset\mathbb A^n$ be an irreducible $\delta$-variety and $\pi_1: \mathbb A^n\longrightarrow \mathbb A^{n-1}$ be the projection to the first $n-1$ coordinates.
Then we have \[\hyperlink{ddim}{\ddim}\big(\overline{\pi_1(V)}^{\hyperlink{Kol}{\kol}}\big)\leqslant {\ddim}(V)\ \ \text{and}\ \  \hyperlink{ord}{\ord}\big(\overline{\pi_1(V)}^{\kol}\big)\leqslant \ord(V).\]
\end{lemma}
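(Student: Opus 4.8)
The plan is to use the description of the Kolchin polynomial of an irreducible $\delta$-variety in terms of its differential dimension and order, so that proving the two inequalities $\ddim(\overline{\pi_1(V)}^{\kol}) \leqslant \ddim(V)$ and $\ord(\overline{\pi_1(V)}^{\kol}) \leqslant \ord(V)$ reduces to comparing invariants of the differential field extensions generated by generic points. Let $\bm{a} = (a_1, \ldots, a_n)$ be a generic point of $V$, so that $K(\bm{a}_{\infty,0})$ is the differential function field of $V$ over $K$. Since $\pi_1$ is the projection onto the first $n-1$ coordinates and $\overline{\pi_1(V)}^{\kol}$ is the Kolchin closure of the image, the point $\bm{a}' := (a_1, \ldots, a_{n-1})$ is a generic point of $\overline{\pi_1(V)}^{\kol}$; this is the standard fact that the Kolchin closure of the image of an irreducible $\delta$-variety under a $\delta$-morphism is irreducible with generic point the image of the generic point. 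Thus the differential function field of $\overline{\pi_1(V)}^{\kol}$ is $K(\bm{a}'_{\infty,0}) = K\big((a_1, \ldots, a_{n-1})_{\infty,0}\big)$, which is a $\delta$-subfield of $K(\bm{a}_{\infty,0})$.

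For the differential dimension inequality, I would argue that $\deltrdeg K(\bm{a}'_{\infty,0})/K \leqslant \deltrdeg K(\bm{a}_{\infty,0})/K$ because differential transcendence degree is monotone under $\delta$-field extensions: any subset of $\{a_1, \ldots, a_{n-1}\}$ that is $\delta$-independent over $K$ is a fortiori $\delta$-independent over $K$ as a subset of $\{a_1, \ldots, a_n\}$, so a differential transcendence basis of $K(\bm{a}'_{\infty,0})/K$ has size at most that of $K(\bm{a}_{\infty,0})/K$. For the order inequality, I would pick a parametric set $U' = \{a_i \mid i \in I'\}$ of $\overline{\pi_1(V)}^{\kol}$ realizing its order, so $\ord(\overline{\pi_1(V)}^{\kol}) = \trdeg K(\bm{a}'_{\infty,0})\big/K\big((a_i, i\in I')_{\infty,0}\big)$ with $I' \subseteq \{1, \ldots, n-1\}$. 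Then $\{a_i \mid i \in I'\}$ is a $\delta$-independent subset of $K(\bm{a}_{\infty,0})$, so it extends to a parametric set $U$ of $V$ with $U \supseteq U'$. Since adding more coordinates to the denominator field only decreases transcendence degree, and since $K(\bm{a}'_{\infty,0}) \subseteq K(\bm{a}_{\infty,0})$, a short tower-of-fields computation gives $\trdeg K(\bm{a}'_{\infty,0})/K\big((a_i, i\in I')_{\infty,0}\big) \leqslant \trdeg K(\bm{a}_{\infty,0})/K\big((a_i, i\in I')_{\infty,0}\big)$, and the latter is at most $\ord(V)$ by definition of $\ord$ as a maximum over parametric sets, together with the fact that the order computed with respect to any $\delta$-independent set is bounded by the order (here one may invoke the cited~\cite[Theorem 2.11]{GLY}).

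The main obstacle I anticipate is making precise the claim that $\bm{a}'$ is a generic point of $\overline{\pi_1(V)}^{\kol}$ and handling the order inequality cleanly: one must be careful that a parametric set of the projection, although it consists of coordinates among $x_1, \ldots, x_{n-1}$, need not directly be a parametric set of $V$ — it is only $\delta$-independent, and one must extend it. The tower-of-fields transcendence degree bookkeeping $\trdeg \big(K(\bm{a}_{\infty,0})/K((a_i)_{\infty,0})\big) = \trdeg\big(K(\bm{a}_{\infty,0})/K(\bm{a}'_{\infty,0})\big) + \trdeg\big(K(\bm{a}'_{\infty,0})/K((a_i)_{\infty,0})\big)$ is the crux; once it is set up correctly, both inequalities drop out. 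An alternative, perhaps cleaner, route for the order bound would be to observe that $\ord(V) = \ord_U(V)$ for a suitable parametric set $U$ containing $U'$ and directly compare the two relative-order transcendence degrees by the additivity above, discarding the nonnegative middle term $\trdeg\big(K(\bm{a}_{\infty,0})/K(\bm{a}'_{\infty,0})\big)$. I would present whichever of these yields the shortest rigorous argument, likely the latter.
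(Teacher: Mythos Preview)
Your setup is correct: $\bm a'=(a_1,\ldots,a_{n-1})$ is generic for $W:=\overline{\pi_1(V)}^{\kol}$, and the $\ddim$ inequality follows by monotonicity of $\deltrdeg$. The gap is in the order inequality. You write
\[
\ord(W)=\trdeg K(\bm a'_{\infty,0})/K\big((a_i,i\in I')_{\infty,0}\big)\ \leqslant\ \trdeg K(\bm a_{\infty,0})/K\big((a_i,i\in I')_{\infty,0}\big)
\]
and then claim the right-hand side is bounded by $\ord(V)$. But when $\ddim(W)<\ddim(V)$, the set $\{a_i:i\in I'\}$ is a proper $\delta$-independent subset of a parametric set of $V$, so the right-hand side is \emph{infinite}: there are elements of $K(\bm a_{\infty,0})$ that are $\delta$-transcendental over $K\big((a_i,i\in I')_{\infty,0}\big)$. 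The ``fact that the order computed with respect to any $\delta$-independent set is bounded by the order'' is simply false for non-maximal $\delta$-independent sets, and \cite[Theorem~2.11]{GLY} speaks only of parametric sets. Your tower-of-fields alternative has the same problem: with $U=I'\cup J$ a parametric set of $V$, the equality $\trdeg K(\bm a)/K(U')=\trdeg K(\bm a)/K(\bm a')+\ord(W)$ is $\infty=\infty+\ord(W)$ and yields nothing; and comparing $\ord(W)$ with $\ord_U(V)=\trdeg K(\bm a)/K(U)$ requires relating $\trdeg K(\bm a')/K(U')$ to $\trdeg K(\bm a')(a_n)_{\infty,0}/K(U')(a_n)_{\infty,0}$, where the general inequality runs the wrong way.

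The paper closes this gap by splitting into cases. If $\ddim(W)=\ddim(V)$, then $\omega_W\leqslant\omega_V$ already forces $\ord(W)\leqslant\ord(V)$. If $\ddim(W)<\ddim(V)$, then necessarily $\ddim(W)=\ddim(V)-1$ (only one coordinate is dropped), so $a_n$ is $\delta$-\emph{transcendental} over $K(\bm a'_{\infty,0})$. This is the missing ingredient: adjoining the algebraically independent family $\{\delta^j a_n\}$ to both numerator and denominator fields preserves the finite transcendence degree, giving
\[
\ord(W)=\trdeg K(\bm a'_{\infty,0})/K\big((a_i,i\in I')_{\infty,0}\big)=\trdeg K(\bm a_{\infty,0})/K\big((a_i,i\in I'\cup\{n\})_{\infty,0}\big)=\ord_{I'\cup\{n\}}(V)\leqslant\ord(V).
\]
So the step you flagged as ``the crux'' really is, but it needs the $\delta$-transcendence of $a_n$ to get an equality rather than a useless inequality.
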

\proof
Let 
$\bm{a}$
be a generic point of $V$.
Then 
$\pi_1(\bm{a})$
is a generic point of $W:=\overline{\pi_1(V)}^{\kol}$.
Clearly,  \[\omega_W(t)\leqslant \omega_V(t)\quad  \text{and}\quad {\ddim}(W)\leqslant {\ddim}(V).\]
So, we have \[\ddim(W)=\ddim(V)\implies \ord(W)\leqslant \ord(V).\]
It, therefore, suffices to show that\[\ddim(W)<\ddim(V) \implies \ord(W)\leqslant \ord(V).\]
Suppose  $\ddim(W)<\ddim(V)=d$.
Then we have \[\ddim(W)=\ddim(V)-1=d-1.\]
Since the order of an irreducible  $\delta$-variety $V$ is equal to the maximal relative order of $V$ with respect to a parametric set,
without loss of generality, suppose \[
\ord(W)=\trdeg\, K\big(\pi_1(\bm{a})_{\infty,0}\big)\big/ K\big(\pi_{n-(d-1)}(\bm{a})_{\infty,0}\big),
\]
where $\pi_{n-(d-1)} : \mathbb{A}^n\to \mathbb{A}^{d-1}$ is the projection to the first $d-1$ coordinates.  
Since \[
\hyperlink{deltrdeg}{\deltrdeg} K\big(\bm{a}_{\infty,0}\big)\big/ K = 1 + \deltrdeg  K\big(\pi_1(\bm{a})_{\infty,0}\big)\big/ K,
\]
 $a_n$ is $\delta$-transcendental over 
 $K(\pi_1(\bm{a})_{\infty,0})$, i.e., $\deltrdeg  K\big(\bm{a}_{\infty,0}\big)\big/ K\big(\pi_1(\bm{a})_{\infty,0}\big)=1$.
Therefore, we have
\begin{align*}
\ord(W)&= \trdeg K\big(\pi_1(\bm{a})_{\infty,0}\big)\big/ K\big(\pi_{n-(d-1)}(\bm{a})_{\infty,0}\big)  \\
&= \trdeg K\big({(a_n)}_{\infty,0}\big) \big(\pi_1(\bm{a})_{\infty,0}\big)\big/ K\big({(a_n)}_{\infty,0}\big)\big(\pi_{n-(d-1)}(\bm{a})_{\infty,0}\big)   = \hyperlink{relord}{\ord_{y_1,\ldots,y_{d-1},y_n}}(V) \leqslant \ord(V). \qedhere
\end{align*}

\begin{lemma} \label{lm-rittnumber}
For all $s \in \mathbb{Z}_{\geqslant 0}$ and $F \subset k[\bm{y}_{s,0}]$, where $\bm{y}=y_1,\ldots,y_n$,
the order of each component of $\mathbb V(F)$ is bounded by  $ns$.
\end{lemma}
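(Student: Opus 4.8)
The plan is to bound $\ord(X)$, for $X$ an irreducible component of $\mathbb{V}(F)$, by counting the leaders of a characteristic set of its defining prime differential ideal with respect to an orderly ranking. Fix such a ranking on $K[\bm{y}_{\infty,0}]$ (as in the proof of Lemma~\ref{lem:separants}), and let $P$ be the prime differential ideal with $X = \mathbb{V}(P)$, a minimal prime over $\sqrt{\langle F\rangle^{(\infty)}}$. The first, and main, step is to exhibit a characteristic set $C$ of $P$ of order at most $s$. For this I would run the Rosenfeld-Gr\"obner algorithm with the chosen orderly ranking on $F$: it writes $\sqrt{\langle F\rangle^{(\infty)}}$ as a finite intersection of characterizable differential ideals $\sqrt{\langle A_i\rangle^{(\infty)}:H_{A_i}^\infty}$ attached to regular chains $A_i$, and since an orderly ranking does not increase the orders of the polynomials produced (the fact already used in Lemma~\ref{lem:separants}), each $A_i$ has order at most $\ord(F)\leqslant s$. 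As $P$ is a minimal prime of $\sqrt{\langle F\rangle^{(\infty)}}$ it is a minimal prime of one of the $\sqrt{\langle A_i\rangle^{(\infty)}:H_{A_i}^\infty}$, and a characteristic set of such a prime component has order bounded by that of the chain $A_i$, hence by $s$. (One can also see this directly from the behaviour of differential reduction: a polynomial of order $\leqslant s$ can never be reduced using a characteristic-set element --- or a derivative of one --- whose leader has order $>s$, so if $C$ contained an element of order $>s$, then $F$ would already lie in the saturated differential ideal of the remaining elements, and, as one checks, this would place a prime differential ideal strictly below $P$ over $\sqrt{\langle F\rangle^{(\infty)}}$, contradicting minimality.)

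Granting $\ord(C)\leqslant s$, the rest is a standard computation. Because $\delta$ is a single derivation, any two derivatives of the same indeterminate are comparable under $\delta$, so an autoreduced set has at most one element per indeterminate; thus $C = \{C_1,\dots,C_m\}$ with $m\leqslant n$, and its leaders are $\delta^{o_1}y_{l_1},\dots,\delta^{o_m}y_{l_m}$ for distinct $l_1,\dots,l_m$, with $o_j = \ord(C_j)\leqslant s$. Let $\bm{a}$ be the generic point of $X$. For an orderly ranking the parametric derivatives --- those $\delta^i a_l$ that are neither leaders nor proper derivatives of leaders --- form a transcendence basis of $K(\bm{a}_{\infty,0})$ over $K$, and counting those of order $\leqslant t$ gives, for $t$ large, the Kolchin polynomial
\[
\omega_X(t) = (n-m)(t+1) + \sum_{j=1}^m o_j .
\]
Comparing this with $\omega_X(t) = \ddim(X)\cdot(t+1) + \ord(X)$ yields $\ddim(X) = n-m$ and $\ord(X) = \sum_{j=1}^m o_j \leqslant ms \leqslant ns$. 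Since the order of $\mathbb{V}(F)$ is the maximum of the orders of its irreducible components, the bound $ns$ holds for $\mathbb{V}(F)$ itself.

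The genuinely delicate point is the first step: that a component of $\mathbb{V}(F)$ admits a characteristic set of order at most $s$. Once the leaders are known to have order $\leqslant s$, everything else --- the passage from the characteristic set to the Kolchin polynomial and the inequality $m\leqslant n$ --- is routine. For the first step I would lean on the order-non-increasing property of Rosenfeld-Gr\"obner under orderly rankings together with standard facts about regular differential chains, or, to keep the argument self-contained, on the differential-reduction observation indicated above.
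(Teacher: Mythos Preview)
Your argument is essentially correct, but it takes a very different route from the paper. The paper's proof is a one-line citation: the bound $\ord(X)\leqslant ns$ for components of a system of order $\leqslant s$ in $n$ differential indeterminates is precisely Ritt's classical order bound \cite[p.~135]{Ritt}, combined with the characterization of order via relative orders in \cite[Theorem~2.11]{GLY}. No argument is given beyond these references.

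What you do instead is reprove Ritt's bound from scratch via Rosenfeld--Gr\"obner and the structure of characteristic sets. This is a legitimate and more self-contained approach, and your computation of $\omega_X(t)=(n-m)(t+1)+\sum_j o_j$ from the leaders of an orderly characteristic set is standard and correct. The point you flag as delicate---that each minimal prime over $\sqrt{\langle F\rangle^{(\infty)}}$ admits a characteristic set of order at most $s$---is indeed the heart of the matter, and is exactly what Ritt's theorem asserts. Your justification via the order-non-increasing property of Rosenfeld--Gr\"obner under orderly rankings, together with the fact that the minimal primes of a characterizable ideal $[A]:H_A^\infty$ inherit characteristic sets with leaders among those of $A$, is valid, though the latter fact itself deserves a citation (e.g., to Hubert or Boulier--Lemaire--Moreno~Maza on properties of regular differential chains) if you want the argument to be fully self-contained. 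Your alternative reduction argument is plausible but sketchier; I would rely on the regular-chain route.

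In short: the paper outsources the work to Ritt; you redo it. Both are fine, but if you are writing this up, either cite Ritt directly as the paper does, or make the regular-chain step precise with a reference.
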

\begin{proof}
It follows directly by \cite[p. 135]{Ritt} and \cite[Theorem 2.11]{GLY}.
\end{proof}

 \begin{definition} 
 For all
 \begin{itemize}
     \item
non-negative integers $n,s,h,d$,
\item $\mathbb{Z}_{\geqslant 0}$-valued polynomials $\omega \in \mathbb{Z}[t]$,
\end{itemize}
 we define $B(\omega,n,s,h,d)$ to be the smallest $M\in\mathbb N\cup\{\infty\}$ such that, for every triple $(X,\pi_1,\pi_2)$ with
\[
X=\mathbb V(F)\subseteq\mathbb A^{n(h+1)},\ \ F \subset k[\hyperlink{Rrs}{\bm{y}_{s,h}}],\ \bm{y}=y_1,\ldots,y_n,\ \  
\deg(F)\leqslant d,
\] 
if there exists a 
train in $X$ of length $M$ and
Kolchin polynomial at least $\omega$, then there exists  an  infinite train in $X$.
 \end{definition}

\begin{remark} For all
non-negative integers $n,s,h,d$ and
$\mathbb{Z}_{\geqslant 0}$-valued polynomials $\omega \in \mathbb{Z}[t]$,
\[B(\omega(t),n,s,h,d)\leqslant B(0,n,s,h,d).
\]
\end{remark}

 In the following, we will show that $B(\omega(t),n,s,h,d)$ is finite   for all $\omega(t)\geqslant 0$ and the numerical data $n,s,h,d$  and is also  bounded by a computable function in these numerical data.
 For the ease of notation, we denote \[L(n,r,d):=C\big(n,r,n(r+1),F(n,r,n(r+1),d)\big),\]
 which is computable.
 So by Corollary \ref{cor:numberofcomponents},
given a system $S$ of $\delta$-polynomials in $n$ $\delta$-variables of order bounded by $r$ and degree bounded by $d$, the number of components of the $\delta$-variety $\mathbb V(S)$ is bounded by $L(n,r,d)$.
By Lemma \ref{lm-traincorrespondence}, the number of maximal trains in $X$ of length $\ell$ is bounded by $L(n(h+\ell),s,d)$.
We now define two  increasing  
sequences $(A_i(n,h,s,d))_{i\in\mathbb N}$ and $(\tau_i(n,h,s,d))_{i\in\mathbb N}$ as follows:\hypertarget{tau}{}
\begin{gather}
\begin{split}
\label{eq:defA}
A_0=L(n(h+1),s,d)+1,\,\, A_{i + 1} = A_i + L(n(h+1)A_i,s,d)\,\,(\text{for } i\geqslant 0)\\
 \tau_0 = ns(h+1),\,\, \tau_{i+1}  = \tau_i + ns(h+1)A_{\tau_i}+1\,\,(\text{for } i\geqslant 0).
 \end{split}
 \end{gather}

\begin{lemma} \label{thm:bound}
  We have
  \[B(0,n,s,h,d)\leqslant A_{\tau_{n(h+1)}
  (n,h,s,d)}(n,h,s,d),\] 
which is computable.
 \end{lemma}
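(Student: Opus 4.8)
The plan is to follow the decomposition--elimination--prolongation method of~\cite{OPS}: show that a train which is long enough must contain a periodic pattern, and that such a pattern propagates --- to the right, by $\sigma$-translation --- to an infinite train. The one ingredient of~\cite{OPS} with no analogue here is the induction on the \emph{codimension} of a subvariety; it will be replaced by a descending induction through the finitely many numerical polynomials that can occur as Kolchin polynomials of the relevant $\delta$-varieties, which is precisely why the bound takes the doubly-indexed form $A_{\tau_{n(h+1)}}$.

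First I would record the a priori bounds that make this induction well founded. Every irreducible component of $X$ --- hence of every $\sigma^{i-1}(X)$, since $\sigma$ only alters the coefficients of $F$ --- is a $\delta$-subvariety of $\mathbb A^{n(h+1)}$ cut out by $\delta$-polynomials of $\delta$-order $\leqslant s$ and degree $\leqslant d$; so by Lemma~\ref{lm-rittnumber} its order is $\leqslant ns(h+1)$, its differential dimension is $\leqslant n(h+1)$, and by Corollary~\ref{cor:numberofcomponents} any such $\delta$-variety has at most $L(n(h+1),s,d)$ components. More generally, by Lemma~\ref{lm-traincorrespondence} the maximal trains of length $\ell$ arise from the irreducible components of $\mathbf W_\ell\subseteq\mathbb A^{n(h+1)\ell}$, so there are at most $L(n(h+1)\ell,s,d)$ of them, and every train of length $\ell$ is contained in a maximal one (Zorn, as already noted). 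Consequently the Kolchin polynomial $\omega_Y=\min_i\omega_{Y_i}$ of a train is one of the finitely many polynomials $m'(t+1)+e'$ with $0\leqslant m'\leqslant n(h+1)$ and $0\leqslant e'\leqslant ns(h+1)$.

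The core is to prove, by descending induction on $\omega$ through this finite set of polynomials, that each $B(\omega,n,s,h,d)$ is finite and bounded by a term $A_{j(\omega)}$ of the sequence~\eqref{eq:defA}, with $j(0)=\tau_{n(h+1)}$. This is really a double induction: the outer parameter is the differential dimension of $\omega$, which can drop at most $n(h+1)$ times and is tracked by the index of $\tau$, while the inner parameter is the order of $\omega$, which at a fixed differential dimension can drop at most $ns(h+1)$ times (Lemma~\ref{lm-projection}), and this is what forces the summand $ns(h+1)A_{\tau_i}$ in the recursion $\tau_{i+1}=\tau_i+ns(h+1)A_{\tau_i}+1$. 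For the inductive step, suppose $X$ carries a train $T=(Y_1,\dots,Y_M)$ with $M$ at least the claimed bound and $\omega_T\geqslant\omega$, and assume for contradiction that $X$ has no infinite train. Pass to a maximal train through $T$. If in fact $\omega_T\geqslant\omega'$ for some immediate predecessor $\omega'$ of $\omega$, the inductive hypothesis applied to $T$ already produces an infinite train. Otherwise $\omega_{Y_i}=\omega$ for some $i$; within any maximal run of consecutive indices on which $\omega_{Y_i}>\omega$ the corresponding subtrain has Kolchin polynomial $\geqslant\omega'$ for a predecessor $\omega'$, and, using Lemma~\ref{lm-projection} to control the Kolchin polynomials of the $\varphi_{\ell,i}$-images inside $\mathbf W_\ell$, once such a run reaches length $B(\omega',\cdot)$ the inductive hypothesis yields an infinite train extending the given prefix. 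Hence $M$ is at most the number of such runs times $B(\omega',\cdot)$; the $A$-recursion then closes the loop, since a pigeonhole over the at most $L(n(h+1)A_{j(\omega')},s,d)$ components of $\mathbf W_{A_{j(\omega')}}$ --- two positions carrying the \emph{same} such component, compatibly with the shift, let one repeat the intervening block $\sigma$-translated forever --- bounds the number of runs, giving $M<A_{j(\omega')}+L(n(h+1)A_{j(\omega')},s,d)$, i.e.\ again a term of the sequence~\eqref{eq:defA}, with $j(\cdot)$ chosen so that dimension drops absorb the accompanying $ns(h+1)$-fold order reduction exactly as the recursion for $\tau$ prescribes. Taking $\omega=0$ yields $B(0,n,s,h,d)\leqslant A_{\tau_{n(h+1)}(n,h,s,d)}(n,h,s,d)$, and computability is immediate because $L$, the $A_i$ and the $\tau_i$ are all computable.

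The step I expect to be the genuine obstacle is converting a pigeonhole repetition into an honest infinite train. Unlike in the algebraic case of~\cite{OPS}, at each level there are infinitely many trains and only the \emph{maximal} ones form a finite set; so one must pigeonhole over something genuinely finite --- an irreducible component of some $\mathbf W_\ell$, that is, a finite \emph{segment} of a maximal train together with its compatibility with $\sigma$-shifts, rather than a single variety --- and simultaneously verify, via Lemma~\ref{lm-projection} and the fibre-product description of $\mathbf W_\ell$ over the rings $R_{\overline{\pi_2(\sigma^{i}(X))}}$, that the failure of this pigeonhole really does force a strict decrease of the Kolchin polynomial on a long subtrain. That is what keeps the descending induction on $\omega$ well founded and makes the bookkeeping match the recursions~\eqref{eq:defA}.
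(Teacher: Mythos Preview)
Your overall plan---descend through Kolchin polynomials, pigeonhole on something finite, extract $\sigma$-periodicity---matches the paper's, but two points are genuine gaps rather than details to be filled in.

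First, the claim that every train has Kolchin polynomial $m'(t+1)+e'$ with $e'\leqslant ns(h+1)$ is false: the $Y_i$ are arbitrary irreducible subvarieties of $\sigma^{i-1}(X)$, not components, and Lemma~\ref{lm-rittnumber} gives no uniform order bound on those. In the paper the next polynomial $\omega_{i+1}$ in the chain is defined as a maximum over trains coming from components of $\mathbf W_{B_i}\subset\mathbb A^{n(h+1)B_i}$; the order of such a component is bounded by $ns(h+1)B_i$ via Lemma~\ref{lm-rittnumber}, and Lemma~\ref{lm-projection} carries that bound to its projections. This is why the $\tau$-recursion contains $ns(h+1)A_{\tau_i}$ rather than the fixed constant $ns(h+1)$: the set of Kolchin polynomials one descends through is not a fixed finite set known in advance, it enlarges with the level.

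Second, and more seriously, the ``runs'' decomposition does not close. At positions with $\omega_{Y_i}=\omega$ the varieties $Y_i$ are not drawn from any finite list, so you cannot pigeonhole on them directly; and if every $\omega_{Y_i}=\omega$ (which nothing rules out) your bound ``number of runs times $B(\omega',\cdot)$'' degenerates. The paper's mechanism is different: given a maximal train of length $B_{i+1}=B_i+D_i$, slide a window of length $B_i$ across it to obtain $D_i+1$ overlapping subtrains $Z^{(j)}=(Y_j,\ldots,Y_{j+B_i-1})$, extend each to a \emph{maximal} train $\tilde Z^{(j)}$ of length $B_i$, and pigeonhole on the $D_i$ possible values of $\sigma^{-j+1}(\tilde Z^{(j)})$. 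The step you correctly flagged as the obstacle is then resolved as follows: a collision $\sigma^{-a+1}(\tilde Z^{(a)})=\sigma^{-b+1}(\tilde Z^{(b)})$ only gives periodicity of the \emph{enlarged} trains, but at the index $l$ where $\omega_{\tilde Z^{(a)}_l}=\omega_{i+1}$ is attained, the inclusion $Z^{(a)}_l\subseteq\tilde Z^{(a)}_l$ is between irreducible varieties of the same Kolchin polynomial (the lower bound on $\omega_{Z^{(a)}_l}$ coming from the hypothesis on the original train), hence is an equality; the same holds at $b$, and this yields $Y_{b+l-1}=\sigma^{b-a}(Y_{a+l-1})$, a single periodic position in the original train, which is enough to propagate to an infinite train.
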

 \begin{proof} Temporarily, fix $X$.
  By Corollary~\ref{cor:numberofcomponents}, 
we  know upper bounds for the number of irreducible components of X and for the number of maximal trains in $X$ of any fixed length.
 The main idea of the proof is to construct a decreasing chain of Kolchin polynomials $\omega_0(t)>\omega_1(t)>\cdots$ and for each $\omega_i(t)$,
 give an upper bound $B_i$ for  $B(\omega_i(t),n,s,h,d)$. Since the Kolchin polynomials are well-ordered, the decreasing chain will stop at some $\omega_J(t)=0$.

Let $\omega_0(t)=\omega_X(t)$.
Let $B_0$ be the number of generic components of the $\delta$-variety $X$ plus 1.
Consider a train $(Y_1,\ldots,Y_{B_0})$ in $X$ of Kolchin polynomial at least $\omega_0(t)$.
So for each $i$,  $\sigma^{-i+1}(Y_i)$ is a $\delta$-subvariety of $X$ with Kolchin polynomial at least $\omega_X(t)$,
so $\sigma^{-i+1}(Y_i)$ must be a generic component of $X$.
Since $X$ has only $B_0-1$ generic components, there exists $a<b\in \mathbb N$ such that $\sigma^{-a+1}(Y_a)=\sigma^{-b+1}(Y_b)$,
which implies $Y_b=\sigma^{b-a}(Y_a)$.
Thus, we can construct an infinite train $$(\ldots,Y_a,Y_{a+1},\ldots,Y_{b-1},\sigma^{b-a}(Y_a),\sigma^{b-a}(Y_{a+1}),\ldots,\sigma^{b-a}(Y_{b-1}),\ldots).$$

 Suppose $\omega_i(t)$ and $B_i$ have been constructed.
 We now try to  do it for $i+1$.
 Let
 \begin{equation}\label{eq:Biplus1}B_{i+1}=B_i+D_i,
 \end{equation}where $D_i$ is the number of maximal trains in  $X$ of length $B_i$.
 Consider the  fibered product $\mathbf{W}_{B_i}(X, \pi_1, \pi_2)$, as in~\eqref{eq:W}, 
 and, for each irreducible component $W$ of $\mathbf{W}_{B_i}$,
 denote \[Y_W=\big(\overline{\varphi_{B_i,1}(W)}^{\kol},\ldots,\overline{\varphi_{B_i,B_i}(W)}^{\kol}\big)\] to be the train corresponding to $W$.
 Let \[\omega_{i+1}(t):=\max\big\{\omega_{Y_W}(t)\:\big|\:  \omega_{Y_W}(t)<\omega_i(t),  W \text{  is a component of } 
 \mathbf{W}_{B_i}
 \big\},\]
 and set $\max\varnothing=0.$

 Consider a maximal train $(Y_1,\ldots,Y_{B_{i+1}})$ in $X$ with Kolchin polynomial at least $\omega_{i+1}(t)$.
 We will show this $B_{i+1}$ works.
 Introduce $D_i+1$ trains $Z^{(1)},\ldots,Z^{(D_i+1)}$ of length $B_i$ in $X,\sigma(X),\ldots,\sigma^{D_i}(X)$, respectively, such that for each $j$,
 $$Z^{(j)}=\big(Z^{(j)}_1,\ldots,Z^{(j)}_{B_i}\big):=(Y_j,\ldots,Y_{j+B_i-1}).$$
 Then for each $j$, consider a maximal train $\tilde{Z}^{(j)}$ of length $B_i$ containing $Z^{(j)}$.
 So $\sigma^{-j+1}(\tilde{Z}^{(j)})$ is a maximal train of length $B_i$ in $X$.
 There are two cases to consider:
 \begin{equation}\tag{Case 1}
 \big\{\omega_{Y_W}(t)\:\big|\:  \omega_{Y_W}(t)<\omega_i(t),  W \text{  is a component of } 
 \mathbf{W}_{B_i}\big\}=\varnothing.\end{equation}
 In this case, $\omega_{i+1}(t)=0$,
 and for each $j$, \[\omega_{\sigma^{-j+1}(\tilde{Z}^{(j)})}(t)\geqslant\omega_i(t).\]
 By the construction of $B_i$, we could construct an infinite train through each $\sigma^{-j+1}(\tilde{Z}^{(j)})$.
\begin{equation}\tag{Case 2}\big\{\omega_{Y_W}(t)\:\big|\:  \omega_{Y_W}(t)<\omega_i(t),  W \text{  is a component of }
\mathbf{W}_{B_i}
\big\}\neq\varnothing.
\end{equation}
 If there exists some $j_0$ such that $\omega_{\sigma^{-j_0+1}(\tilde{Z}^{(j_0)})}(t)\geqslant\omega_i(t)$, then
by the construction of $B_i$, we could construct an infinite train through this $\sigma^{-j_0+1}(\tilde{Z}^{(j_0)})$.
 Suppose now that,  for each $j$, \[\omega_{\sigma^{-j+1}(\tilde{Z}^{(j)})}(t)=\omega_{i+1}(t).\]
 Since there are only $D_i$ number of maximal trains in  $X$ of length $B_i$,
  there exist $a<b$ such that \[\sigma^{-a+1}(\tilde{Z}^{(a)})=\sigma^{-b+1}(\tilde{Z}^{(b)}).\]
 Since $\omega_{\sigma^{-a+1}(\tilde{Z}^{(a)})}(t)=\omega_{i+1}(t)$, there exists $l$ such that \[\omega_{\sigma^{-a+1}(\tilde{Z}^{(a)}_l)}(t)=\omega_{i+1}(t).\]
 Then \[\omega_{\sigma^{-a+1}({Z}^{(a)}_l)}(t)=\omega_{i+1}(t),\] for $\sigma^{-a+1}({Z}^{(a)}_l)\subseteq\sigma^{-a+1}(\tilde{Z}^{(a)}_l)$ and
 the  Kolchin polynomial of $(Y_1,\ldots,Y_{B_{i+1}})$ is at least $\omega_{i+1}(t)$.
 So we have \[\sigma^{-a+1}({Z}^{(a)}_l)=\sigma^{-a+1}({\tilde{Z}}^{(a)}_l).\]
 Similarly, we can show \[\sigma^{-b+1}({Z}^{(b)}_l)=\sigma^{-b+1}({\tilde{Z}}^{(b)}_l).\]
 So \[\sigma^{-a+1}(Y_{a+l-1})=\sigma^{-a+1}({Z}^{(a)}_l)=\sigma^{-a+1}({\tilde{Z}}^{(a)}_l)=\sigma^{-b+1}({\tilde{Z}}^{(b)}_l)=\sigma^{-b+1}({Z}^{(b)}_l)=\sigma^{-b+1}(Y_{b+l-1}).\]
Thus, we have \[Y_{b+l-1}=\sigma^{b-a}(Y_{a+l-1}).\]
Therefore, we can construct an infinite sequence
$$\Big(Y_1,\ldots,Y_{a+l-1},,\ldots,Y_{b+l-2},\sigma^{b-a}(Y_{a+l-1}), \dots,\sigma^{b-a}(Y_{b+l-2}),\ldots\Big).$$

 As we described in the first paragraph, as the process goes on, we has constructed a decreasing chain of Kolchin polynomials
 \[\omega_0(t)=\omega_X(t)>\omega_1(t)>\omega_2(t)>\cdots.\]
 Since the Kolchin polynomials are well-ordered, this chain is finite,
 so the above process will stop at step $J$ at which we could get $\omega_J(t)=0$,
 either in the case in which \[\big\{\omega_{Y_W}(t)\:\big|\:  \omega_{Y_W}(t)<\omega_{J-1}(t),  W \text{  is a component of } 
 \mathbf{W}_{B_{J-1}}
 \big\}=\varnothing,\]
 or in the case in which the set is nonempty and the maximal Kolchin polynomial in the set is 0.
 
  By Lemma~\ref{lm-traincorrespondence} and Corollary \ref{cor:numberofcomponents},  
for the number $D_i$ of maximal trains in $X$ of length $B_i$, we have
\begin{equation}\label{eq:bound_D}
D_i \leqslant L(n(h+1)B_i, s, d),\ \text{ so }\ B_{i + 1} \leqslant B_i + L(n(h+1)B_i, s, d).
\end{equation}
By Corollary~\ref{cor:numberofcomponents}, we have 
\[
  B_0 \leqslant L(n(h + 1), s, d) + 1.
\]
  For each $i$, $0 \leqslant i \leqslant J$, let $a_i$ and $b_i$ be such that 
  \[
    \omega_i(t) = a_i(t + 1) + b_i.
  \]
  For $i = 0$, we have $a_0 = \hyperlink{ddim}{\ddim}(X)$ and $b_0 = \hyperlink{ord}{\ord}(X)$.
  For every $j$, $0 \leqslant j \leqslant a_0$, we define $i_j$ to be the largest integer in $[0,J]$ such that $a_0 - a_{i_j} \leqslant j$.
  Then $J = i_{a_0}$.
  The decreasing of the Kolchin polynomials implies that, for all $j$,  $0 \leqslant j <  a_0$:
  \begin{itemize}
      \item we have
  \begin{equation}\label{eq:bound_i}
      i_0 \leqslant b_0 \quad\text{ and}\quad  i_{j + 1} \leqslant i_j + b_{i_j} + 1.
  \end{equation}
  \item by the definition of $\omega_{i_j + 1}(t)$ and Lemma \ref{lm-projection}, $b_{i_j+1}$ is bounded by the maximal order of the components of $W_{B_{i_j}}$, so 
  \item by  
  Lemma~\ref{lm-rittnumber}, 
  \begin{equation}\label{eq:bound_b}
      b_{i_j + 1} \leqslant n s(h+1)B_{i_j}.
  \end{equation}
  \end{itemize}
  Comparing the recursive formulas~\eqref{eq:defA} with inequalities~\eqref{eq:bound_D}, \eqref{eq:bound_i}, and \eqref{eq:bound_b}, we see that 
  \begin{itemize}
    \item $B_i \leqslant A_i$ for every $i$, $0 \leqslant i \leqslant J$;
    \item $i_j \leqslant \hyperlink{tau}{\tau_j}$ for every $j$, $0 \leqslant j \leqslant a_0 = \ddim(X)$.
  \end{itemize}
  Thus,
  \[
  B_J = B_{i_{\ddim(X)}} \leqslant A_{i_{\ddim(X)}} \leqslant A_{\tau_{\ddim(X)}}\leqslant A_{\tau_{n(h+1)}} .\qedhere
  \]
 \end{proof}

As a consequence, we have the following result.

 \begin{cor} \label{cor:620}
For all $s,h \in\mathbb{Z}_{\geqslant 0}$ and $F \subset k[\hyperlink{Rrs}{\bm{y}_{s,h}}]$,
 $F = 0$ has a solution in 
 $K^\mathbb Z$ 
 if and only if $F = 0$ has a partial solution of length $D:=A_{\tau_{n(h+1)}(n,h,s,d)}(n,h,s,d)$.
 \end{cor}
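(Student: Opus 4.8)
The plan is to recast both sides of the equivalence in terms of trains in the $\delta$-variety $X$ and then quote Lemma~\ref{thm:bound}. Write $\bm{y} = y_1,\ldots,y_n$, $d = \deg F$, $X = \mathbb V(F) \subseteq \mathbb A^{n(h+1)}$, and let $(X,\pi_1,\pi_2)$ be the triple associated with $F$ as above. By Lemma~\ref{lem:64}, $F=0$ has a solution in $K^{\mathbb Z}$ if and only if $(X,\pi_1,\pi_2)$ has a (two-sided infinite) solution, and $F=0$ has a partial solution of length $\ell$ if and only if $(X,\pi_1,\pi_2)$ has a partial solution of length $\ell$; so I would reduce to proving the statement with $F$ replaced by the triple and $\ell = D$.

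For the ``only if'' direction this is then immediate: restricting a two-sided solution $(p_i)_{i\in\mathbb Z}$ of the triple to the indices $1,\ldots,D$ gives a partial solution of length $D$. For the ``if'' direction, suppose $p_1,\ldots,p_D$ is a partial solution of $(X,\pi_1,\pi_2)$. I would set $Y_i := \overline{\{p_i\}}^{\kol}$, which is an irreducible $\delta$-subvariety of $\sigma^{i-1}(X)$ since $p_i \in \sigma^{i-1}(X)$. Using that $\overline{\varphi\bigl(\overline{S}^{\kol}\bigr)}^{\kol} = \overline{\varphi(S)}^{\kol}$ for a morphism $\varphi$, the identity $\pi_1(p_{i+1}) = \pi_2(p_i)$ promotes to
\[
\overline{\pi_1(Y_{i+1})}^{\kol} = \overline{\{\pi_1(p_{i+1})\}}^{\kol} = \overline{\{\pi_2(p_i)\}}^{\kol} = \overline{\pi_2(Y_i)}^{\kol}
\]
for $1 \leqslant i < D$, so $(Y_1,\ldots,Y_D)$ is a train of length $D$ in $X$, of Kolchin polynomial $\geqslant 0$ automatically.

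Finally, $D = A_{\tau_{n(h+1)}(n,h,s,d)}(n,h,s,d) \geqslant B(0,n,s,h,d)$ by Lemma~\ref{thm:bound}; truncating $(Y_1,\ldots,Y_D)$ to its first $B(0,n,s,h,d)$ terms leaves a train of that length whose Kolchin polynomial is still $\geqslant 0$ (the minimum defining the Kolchin polynomial of a train can only increase under truncation), so by the defining property of $B(0,n,s,h,d)$ there is an infinite train in $X$. Lemma~\ref{lem:66} turns an infinite train into a solution of $(X,\pi_1,\pi_2)$, and Lemma~\ref{lem:64} turns that into a solution of $F=0$ in $K^{\mathbb Z}$, closing the loop. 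The only step with any content is the passage from a bare partial solution to a train, i.e.\ checking that the Kolchin closures of the points satisfy the train axioms; all the hard work has in fact already been done inside Lemma~\ref{thm:bound}, of which this corollary is essentially a reformulation in terms of partial solutions.
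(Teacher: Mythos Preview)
Your proof is correct and follows the same route as the paper's: translate both sides to statements about trains in $X$ via Lemmas~\ref{lem:64} and~\ref{lem:66}, then invoke Lemma~\ref{thm:bound}. The one place you add detail is the passage from a partial solution $(p_1,\ldots,p_D)$ of the triple to a train, which you do by taking $Y_i=\overline{\{p_i\}}^{\kol}$; the paper simply asserts the equivalence ``partial solution of length $D$ $\iff$ train of length $D$'' citing Lemmas~\ref{lem:64} and~\ref{lem:66}, even though Lemma~\ref{lem:66} as stated only gives the train-to-partial-solution direction, so your explicit construction is a small clarification rather than a different argument.
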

\begin{proof}
 Let $X\subset\mathbb A^H$ be the $\delta$-variety defined by $F=0$ regarded as a system of $\delta$-equations in $\bm{y}, \sigma(\bm{y}),\ldots, \sigma^h(\bm{y})$, where $H=n(h+1)$.
 By Lemmas~\ref{lem:64}  and~\ref{lem:66}, 
 $F = 0$ has a partial solution of length $D$ (resp. $F = 0$ has a solution in $K^\mathbb Z$
 ) if and only if there exists a train of length $D$ in $X$ (resp., there exists an infinite train in $X$).
 By Lemma~\ref{thm:bound}, if there exists a train of length $D$ in $X$, then there  exists a infinite train in $X$. So the assertion holds.
 \end{proof}

 \subsection{Proof of Theorem~\ref{thm:main}}\label{sec:main_proof}
We will prove a more refined version of Theorem~\ref{thm:main}:
\begin{theorem}\label{thm:main3}  For all non-negative integers $r,s,h,d$, there exists a computable $B=B(r,s,h,d)$ such that, for all:  \begin{itemize}
\item non-negative integers $q$,
   \item $\delta$-$\sigma$-fields
    $k$,   
\item sets of $\delta$-$\sigma$-polynomials $F\subset k[\hyperlink{Rrs}{\bm{x}_{s,h},\bm{y}_{s,h}}]$, where $\bm{x} =x_1,\ldots,x_q$,  $\bm{y}=y_1,\ldots,y_r$, and
$\deg_{\bm{y}}F\leqslant d$, 
   \end{itemize}
   we have 
      \[\big\langle \sigma^i(F)\mid i\in \mathbb{Z}_{\geqslant 0} \big\rangle^{\hyperlink{diffideal}{(\infty)}}\cap k[\hyperlink{kinfty}{\bm{x}_\infty}]\ne\{0\} \iff \langle \sigma^i(F)\mid i\in [0,B] \big\rangle^{\hyperlink{diffideal}{(B)}}\cap k[\hyperlink{kB}{\bm{x}_B}]\ne\{0\}.\]
 \end{theorem}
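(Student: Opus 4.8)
The plan is to deduce Theorem~\ref{thm:main3} from the machinery already assembled — the train bound of Corollary~\ref{cor:620}, the weak Nullstellensatz contained in Proposition~\ref{prop:strongnullstellensazt}, and the embedding Lemma~\ref{lem:embedding} — after using localization to dispose of the parametric variables $\bm x$, which are the only variables not covered by the degree hypothesis $d$.

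The implication ``$\Leftarrow$'' is immediate, since $\langle\sigma^i(F)\mid i\in[0,B]\rangle^{(B)}\subseteq\langle\sigma^i(F)\mid i\in\mathbb Z_{\geqslant 0}\rangle^{(\infty)}$ and $k[\bm x_B]\subseteq k[\bm x_\infty]$. For ``$\Rightarrow$'' I would argue by contraposition: assuming $\langle\sigma^i(F)\mid i\in[0,B]\rangle^{(B)}\cap k[\bm x_B]=\{0\}$ for the computable $B$ fixed below, I must show $\langle\sigma^i(F)\mid i\in\mathbb Z_{\geqslant 0}\rangle^{(\infty)}\cap k[\bm x_\infty]=\{0\}$. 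First I pass to the $\delta$-$\sigma$-field $\kk:=k(\bm x_\infty)$, into which $k$ embeds as a $\delta$-$\sigma$-subfield, and view $F$ as a subset of $\kk[\bm y_{s,h}]$ with $\deg F\leqslant d$. Because the $\kk$-span of $F$ lies inside the finite-dimensional $\kk$-space of polynomials of total degree $\leqslant d$ in the finitely many $\bm y$-variables of order $\leqslant(s,h)$, I may replace $F$ by a finite subset with the same $\kk$-span, which changes none of the ideals appearing in the statement. A routine contraction argument — localizing $k[\bm x_\infty,\bm y_\infty]$ at $k[\bm x_\infty]\setminus\{0\}$ and noting that a pure-$\bm x$ element of an ideal generated by polynomials of bounded $\bm x$-order is a $k[\bm x_\infty]$-multiple of an element of the corresponding bounded-order elimination ideal — shows that for such a set of generators the elimination set (at matching finite orders, or at order $\infty$) is $\{0\}$ exactly when the corresponding ideal over $\kk$ does not contain $1$. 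Thus it suffices to prove: if $1\notin\langle\sigma^i(F)\mid i\in[0,B]\rangle^{(B)}$ over $\kk$, then $1\notin\langle\sigma^i(F)\mid i\in\mathbb Z_{\geqslant 0}\rangle^{(\infty)}$ over $\kk$.

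Now comes the chain of equivalences over $\kk$. Embed $\kk$ into a differentially closed $\delta$-$\sigma^\ast$-field $\widehat{\kk}$ by Lemma~\ref{lem:embedding} (a field extension of the base does not affect whether $1$ lies in the generated ideal). By Proposition~\ref{prop:strongnullstellensazt} with $f=1$, we have $1\notin\langle\sigma^i(F)\mid i\geqslant 0\rangle^{(\infty)}$ over $\widehat{\kk}$ if and only if $F=0$ has a solution in $\widehat{\kk}^{\,\mathbb Z}$; by Corollary~\ref{cor:620}, applied with $n=r$ (its bound being uniform in the base $\delta$-$\sigma$-field), this is equivalent to $F=0$ having a partial solution of length $D:=A_{\tau_{r(h+1)}(r,h,s,d)}(r,h,s,d)$, which is computable; unwinding the definition of partial solution and using that $\widehat{\kk}$ is differentially closed, this in turn amounts to consistency of the differential system $\{\sigma^i(F)=0\mid 0\leqslant i\leqslant D-1\}$ in the $r(D+h)$ differential indeterminates $\sigma^m y_k$ ($0\leqslant m\leqslant D+h-1$, $1\leqslant k\leqslant r$), of differential order $\leqslant s$ and degree $\leqslant d$; and by an effective differential Nullstellensatz there is a computable $N$ (a function of $r(D+h)$, $s$, $d$) such that this system is inconsistent if and only if $1\in\langle\sigma^i(F)\mid 0\leqslant i\leqslant D-1\rangle^{(N)}$ over $\kk$. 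Finally I set $B:=\max(N+s,\,D-1+h)$, computable in $r,s,h,d$. Since the generators $\delta^j\sigma^i(f)$ with $i\leqslant D-1$ and $j\leqslant N$ are among those with $i,j\leqslant B$, we get $\langle\sigma^i(F)\mid 0\leqslant i\leqslant D-1\rangle^{(N)}\subseteq\langle\sigma^i(F)\mid i\in[0,B]\rangle^{(B)}$, so the hypothesis $1\notin\langle\sigma^i(F)\mid i\in[0,B]\rangle^{(B)}$ forces $1\notin\langle\sigma^i(F)\mid 0\leqslant i\leqslant D-1\rangle^{(N)}$, and running the equivalences above in reverse gives $1\notin\langle\sigma^i(F)\mid i\geqslant 0\rangle^{(\infty)}$ over $\kk$, finishing the contrapositive. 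Specializing $h=d=s$ then yields Theorem~\ref{thm:main}.

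The genuinely hard part — bounding how long a maximal train can grow before an infinite train is forced, via a descent through a decreasing chain of Kolchin polynomials — is exactly what Corollary~\ref{cor:620} supplies, resting on Lemma~\ref{thm:bound} and the component bound of Corollary~\ref{cor:numberofcomponents}, so at this stage it is a black box. The only delicate points remaining are the localization and contraction bookkeeping that legitimizes passing to $\kk=k(\bm x_\infty)$ (which converts the degree bound on $\bm y$ alone into a degree bound on all the remaining variables, as required by Corollary~\ref{cor:620}) and the appeal to an effective differential Nullstellensatz to turn the consistency of a finite differential system into a bounded-order membership certificate; the latter is the step that draws most heavily on results external to the paper.
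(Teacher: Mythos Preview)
Your proposal is correct and follows essentially the same route as the paper: localize at $k[\bm x_\infty]\setminus\{0\}$ to convert elimination in $\bm x$ into $1$-membership over $k(\bm x_\infty)$, then chain together the effective differential Nullstellensatz, Corollary~\ref{cor:620}, and Proposition~\ref{prop:strongnullstellensazt} with $f=1$. The paper runs the argument directly rather than by contraposition and clears denominators over $k(\bm x_B)$ rather than $k(\bm x_\infty)$, but these are cosmetic; the one place to be careful is that your contraction claim at level $B$ should really be applied to the smaller ideal $\langle\sigma^i(F)\mid 0\leqslant i\leqslant D-1\rangle^{(N)}$ (whose generators have $\bm x$-order $\leqslant B$ by your choice of $B=\max(N+s,D-1+h)$), since the generators $\delta^j\sigma^i(f)$ with $i,j\leqslant B$ can have $\bm x$-order up to $B+s$ and $B+h$ --- this is exactly what the paper does.
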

 \begin{proof}
 The ``$\impliedby$'' implication is straightforward. We will prove the ``$\implies$" implication. For this, let 
 $A := A_{\tau_{r(h+1)}(r,h,s,d)}(r,h,s,d)$,
 $B_\delta$ be the bound $B$ from \cite[Theorem~1]{OPV2018} with \[|\bm{\alpha}|\leftarrow r(s+1)(A + h + 1),
 \ \  
 m\leftarrow r(s+1)(A + h + 1),\ \ \text{and}\ \ d \leftarrow d,\]
 and $B := B_\delta + s$.
 By assumption, 
 \begin{equation}\label{eq:assumption}1 \in \big\langle \sigma^i(F)\mid i\in \mathbb{Z}_{\geqslant 0} \big\rangle^{(\infty)}\cdot k(\bm{x}_\infty)[\bm{y}_\infty].
 \end{equation}
  Suppose that 
 \begin{equation}\label{eq:zerointersect}\langle \sigma^i(F)\mid i\in [0, A] \big\rangle^{(B_{\delta})}\cap k[\bm{x}_{B}]=\{0\}.
 \end{equation}
 If  \[1 \in \big\langle \sigma^i(F)\mid i\in [0, A] \big\rangle^{({B_{\delta}})}\cdot k(\bm{x}_{B})[\bm{y}_{\infty,A + h}],\] 
 then there would exist $c_{i, j}\in k(\bm{x}_{B})[\bm{y}_{\infty,A + h}]$ such that 
 \begin{equation}\label{eq:expr1}
  1 = \sum\limits_{i = 0}^{{B_{\delta}} } \sum\limits_{j = 0}^A  \sum\limits_{f\in F} c_{i, j}\delta^i\sigma^j(f).
 \end{equation}
 Multiplying equation~\eqref{eq:expr1} by the common denominator in the variables $\bm{x}_B$, we obtain a contradiction with~\eqref{eq:zerointersect}.  Hence,
 by \cite[Theorem~1]{OPV2018},
 \[1 \notin \big\langle \sigma^i(F)\mid i\in [0, A] \big\rangle^{(\infty)}\cdot k(\bm{x}_{B})[\bm{y}_{\infty, A + h}].\] 
 By the differential Nullstellensatz, there exists a differential field extension $K\supset k(\bm{x}_{\infty})\supset k(\bm{x}_{B})$ such that the system of differential equation 
 \[
 \{\sigma^i(F)=0\mid i\in [0, A]\}
 \] 
 in the unknowns $\bm{y}_{\infty, A + h}$ has a 
 solution in $K$. 
Then the system $F = 0$ has a partial solution of length $A + 1$ in $K$.  However, by Proposition~\ref{prop:strongnullstellensazt} and Remark~\ref{rem:nullst} applied to~\eqref{eq:assumption},  the system $F=0$ has no solutions in $K^{\mathbb{Z}}$.
Together with the existence of a partial solution of length $A + 1$, this  contradicts to Corollary~\ref{cor:620}.
 \end{proof}

\section*{Acknowledgments}
This work has been partially supported by NSF under grants (CCF-1564132, CCF-1563942, DMS-1760448, and DMS-1760413); PSC-CUNY under grant 60098-00 48;
and NSFC under grants (11688101, 11671014).
 
\bibliographystyle{abbrvnat}
\setlength{\bibsep}{6pt}
\bibliography{bibdata}
 
\end{document}